\newcounter{cccomment}
\newcounter{dccomment}
\newsavebox{\largestimage}
\numberwithin{figure}{section}
\tikzset{snake it/.style={decorate, decoration=snake}}
\DeclareMathOperator{\Diff}{Diff}
\DeclareMathOperator{\hol}{hol}
\newtheorem{theorem}{Theorem}  
\newtheorem{corollary}[theorem]{Corollary}  
\newtheorem{conj}[theorem]{Conjecture}		
		\newtheorem{thm}{Theorem}[section]
		\newtheorem{lem}[thm]{Lemma}
		\newtheorem{prop}[thm]{Proposition}
	\theoremstyle{definition}	
		\newtheorem{remark}[thm]{Remark}
	\theoremstyle{definition}
	\newtheorem{example}[thm]{Example}
	\newtheorem{setup}[thm]{Setup}
 \newtheoremstyle{TheoremNum}
        {\topsep}{\topsep}              
        {\itshape}                      
        {}                              
        {\bfseries}                     
        {.}                             
        { }                             
        {\thmname{#1}\thmnote{ \bfseries #3}}
    \theoremstyle{TheoremNum}
\numberwithin{equation}{section}
\newtheorem*{ack}{Acknowledgments}
\newcommand{\C}{\mathbb{C}} 
\newcommand{\Cl}{\mathrm{cl}} 
\newcommand{\fol}{\mathcal{F}} 
\newcommand{\GL}{\mathrm{GL}} 
\newcommand{\Hol}{\mathrm{Hol}} 
\newcommand{\N}{\mathbb{N}}
\newcommand{\R}{\mathbb{R}} 
\newcommand{\Sp}{\mathbb{S}} 
\newcommand{\Tr}{\mathcal{T}}
\newcommand{\Z}{\mathbb{Z}} 
\DeclarePairedDelimiter{\oldnormaux}{\bracevert}{\bracevert}
\NewDocumentCommand{\oldnorm}{som}{%
  \IfBooleanTF{#1}
    {\oldnormaux*{#3}}
    {\IfNoValueTF{#2}
       {\oldnormaux*{\vphantom{dq}#3}}
       {\oldnormaux[#2]{#3}}%
    }%
}
\begin{document}

\author[C.~CAMPAGNOLO]{Caterina Campagnolo}
\address[C.~CAMPAGNOLO]{Institut f\"ur Algebra und Geometrie, Karlsruher Institut f\"ur Technologie (KIT), Karlsruhe, Germany.}
\email{\href{mailto:caterina.campagnolo@kit.edu}
{caterina.campagnolo@kit.edu}}
\urladdr{\url{http://www.math.kit.edu/iag7/~campagnolo/de}}


		
\author[D.~Corro]{Diego Corro}
\address[D.~CORRO]{Institut f\"ur Algebra und Geometrie, Karlsruher Institut f\"ur Technologie (KIT), Karlsruhe, Germany.}
\email{\href{mailto:diego.corro@kit.edu}
{diego.corro@kit.edu}}
\urladdr{\url{http://www.math.kit.edu/iag5/~corro/en}}



\title[Foliated simplicial volume and circle foliations]{Integral foliated simplicial volume and circle foliations}


\subjclass[2010]{53C12, 55N10, 55R55, 57R19}
\keywords{Regular circle foliation, integral foliated simplicial volume}

\setlength{\overfullrule}{5pt}
	\begin{abstract}
		We show that the integral foliated simplicial volume  of a connected compact oriented smooth manifold with a regular foliation by circles vanishes.
	\end{abstract}
	
\maketitle	


\section{Introduction}

In his proof of Mostow rigidity  in \cite{Gromov1982}, Gromov introduced the concept of simplicial volume for an oriented compact connected topological manifold $M$. This is a homotopy  invariant of $M$, which measures the complexity of singular fundamental cycles of $M$ with $\R$-coefficients. When $M$ is a smooth manifold, its simplicial volume also has a geometric interpretation as an obstruction to the existence of a complete Riemannian metric of negative sectional curvature on the given manifold (see \cite{Gromov1982,Thurston1997, InoueYano1982}). Furthermore Gromov showed that the simplicial volume is dominated by the minimal volume of the manifold  (see \cite{Gromov1982}). In this light, the simplicial volume is an obstruction to the existence of a sequence of collapsing Riemannian metrics on the given manifold (see \cite{Pansu1983}).

A long-standing  conjecture about this interplay between topology and geometry was stated by Gromov \cite[$8. A_4 (+)$]{Gromov1993}, \cite[3.1 (e) on p. 769]{Gromov2009}:

\begin{conj}[]\label{Con: Simplicial volume = 0 imlies E(M) = 0}
Let $M$ be an oriented closed (compact without boundary) connected aspherical manifold. If the simplicial volume of $M$ vanishes, then  the $L^2$-Betti numbers of $M$ vanish. In particular, the Euler characteristic of $M$ also vanishes.
\end{conj}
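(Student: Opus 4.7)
The plan is to bridge simplicial volume and $L^2$-Betti numbers through the \emph{integral foliated simplicial volume} $|M|^{\Z}$, relying on two established pillars: Sauer's inequality $b_k^{(2)}(M) \leq |M|^{\Z}$ for closed aspherical $M$, and the Atiyah $L^2$-index theorem $\chi(M) = \sum_k (-1)^k b_k^{(2)}(M)$. Together these reduce the entire conjecture to the single implication $\|M\| = 0 \;\Rightarrow\; |M|^{\Z} = 0$. The whole task is then to upgrade a sequence of real fundamental cycles $c_n \in C_n(M;\R)$ with $\|c_n\|_1 \to 0$ to a parametrized integer fundamental cycle over some standard Borel $\pi_1(M)$-space $(X,\mu)$ of arbitrarily small $L^1$-norm.

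My proposed attack on this core step has three ingredients. First, take as parameter space the Bernoulli shift $X = \{0,1\}^{\pi_1(M)}$ with product measure and the left translation action; because this $\pi_1(M)$-action is essentially free and weakly mixing, it supplies enough independence to decouple rounding choices across distinct simplices of the universal cover $\widetilde{M}$. Second, for each $n$ construct a measurable random rounding $\widetilde{c}_n \colon X \to C_n(\widetilde{M};\Z)$ of the lift of $c_n$, independently rounding each coefficient to its two nearest integers with probabilities matched to its fractional part; expectation recovers the original real coefficient, and a Jensen estimate forces $\int_X \|\widetilde{c}_n(x)\|_1 \, d\mu(x) \leq \|c_n\|_1 + O(1)$. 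Third, repair the cycle condition: $\partial \widetilde{c}_n$ is a $\pi_1(M)$-equivariant random integer $(n-1)$-cycle in $\widetilde{M}$, and the asphericity of $M$ together with contractibility of $\widetilde{M}$ should produce a measurable equivariant bounding chain with norm comparable to $\|\partial \widetilde{c}_n\|_1$.

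The hard part, and precisely where the conjecture has resisted every previous attack, lies in the third ingredient: producing a measurable equivariant bounding chain whose $L^1$-norm is controlled by a universal multiple of the norm of its boundary, uniformly in $n$. A deterministic such bound would amount to a uniform filling inequality on $\widetilde{M}$, which is unknown in this generality and indeed impossible for equivariant integer chains on a general aspherical universal cover. Introducing the randomness of $X$ buys additional flexibility, since the filling need only be measurable in the parameter; the hope is to convert this flexibility into a quantitative bound by an ergodic averaging scheme over Følner-like subsets of $\pi_1(M)$, or, in the absence of amenability, by an equivariant Poincaré duality argument transferring the filling problem into a cobounding problem in measurable bounded cohomology.

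If the randomized filling can be executed without amplifying the norm, then a weak-$\ast$ cluster point of the resulting sequence of parametrized fundamental cycles witnesses $|M|^{\Z} = 0$, and the Sauer--Atiyah pipeline closes the conjecture. The step I fully expect to be the main obstacle is exactly this controlled measurable filling; a genuinely new input, plausibly a parametrized version of Gromov's vanishing theorem for simplicial volume under amenable covers of small multiplicity, or an $L^2$-bounded-cohomology duality for $\pi_1(M)$, seems necessary to close the gap.
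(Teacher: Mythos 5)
This statement is a \emph{conjecture}, not a theorem proved in the paper: the authors state it only as background motivation (it is Gromov's long-standing open problem) and do not offer a proof. Your proposal should therefore be assessed against the known state of the art, not against an argument in the text. Your first two steps correctly reproduce the standard reduction that the paper itself sketches in the introduction (and attributes to Gromov and Schmidt, see \cite{Gromov1999, Schmidt2005}): Schmidt/Sauer's inequality bounds $L^2$-Betti numbers by the integral foliated simplicial volume of a closed aspherical manifold, and the Atiyah $L^2$-index theorem then yields $\chi(M)=0$, so the entire conjecture reduces to the implication $\|M\| = 0 \Rightarrow \oldnorm{M} = 0$ for aspherical $M$. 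This reduction is correct and well known.

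The genuine gap is exactly the one you flag yourself. The third ingredient --- producing a measurable $\pi_1(M)$-equivariant integer filling of $\partial\widetilde{c}_n$ with $L^1$-norm controlled uniformly by $\|\partial\widetilde{c}_n\|_1$ --- is not achieved by anything in your outline; the Bernoulli randomization and Jensen estimate give the small-norm integer approximation, but they do not repair the boundary, and no uniform equivariant filling inequality is available on a general aspherical universal cover. Invoking ``ergodic averaging over F\o lner-like sets'' presupposes amenability, and the alternative appeal to ``an equivariant Poincar\'e duality transferring to measurable bounded cohomology'' is a hope, not an argument. As written, the proposal is an honest and accurate description of a known obstruction, not a proof. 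Since the paper does not claim to resolve the conjecture either (it proves only the restricted Theorem \ref{Theorem: circle foliation implies the vanishing of foliated simplicial volume} about circle foliations), there is no discrepancy with the paper, but there is also no proof here: the core implication $\|M\|=0 \Rightarrow \oldnorm{M}=0$ remains open, and your proposal does not close it.
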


An approach to give a positive answer to Conjecture~\ref{Con: Simplicial volume = 0 imlies E(M) = 0} is laid out in \cite{Gromov1999, Schmidt2005}: Consider a suitable  integral approximation of the simplicial volume, and then apply a Poincar\'{e} duality theorem to bound the $L^2$-Betti numbers, in terms of this integral approximation. Finally,  relate the integral approximation to the simplicial volume on aspherical manifolds. An instance of such an approximation is the integral foliated simplicial volume (see \cite{Gromov1999, Schmidt2005}), which we consider here. 

In this note we show that in the presence of a circle foliation and a big fundamental group, the  integral foliated simplicial volume vanishes:

\begin{theorem}\label{Theorem: circle foliation implies the vanishing of foliated simplicial volume}
Let $M$ be a compact oriented smooth manifold equipped with a regular smooth circle foliation $\fol$, such that the inclusion of each  leaf of $\fol$ is $\pi_1$-injective and has finite holonomy. Then the (relative) integral foliated simplicial volume of $M$ vanishes:
\[
	\oldnorm{M,\partial M} = 0.
\]
\end{theorem}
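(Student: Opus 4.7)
My strategy would be to reduce the statement to a vanishing result for manifolds admitting a smooth free $S^1$-action, and then to push the conclusion down the reduction. The finite holonomy and $\pi_1$-injectivity hypotheses are exactly what is needed to turn $\fol$ into such an action on a finite cover, while the smearing-along-orbits argument handles the $S^1$-action case.

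\emph{Step 1: reduction to trivial holonomy.} Using the finite holonomy of every leaf, I would pass to a finite covering $p\colon\widehat M\to M$ on which the pulled-back foliation $\widehat\fol=p^{-1}\fol$ has trivial holonomy. This is a standard construction: the holonomy groups assemble into a finite cover of the leaf space $M/\fol$ (which is a smooth orbifold, since the foliation is regular with finite holonomy by an Epstein-type theorem), and one pulls $M$ back along this orbifold cover. Since the integral foliated simplicial volume scales by $1/[\,p\,]$ under finite coverings (and the pull-back of a foliated fundamental cycle is a foliated fundamental cycle of the cover), it suffices to prove $\oldnorm{\widehat M,\partial \widehat M}=0$.

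\emph{Step 2: from trivial holonomy to an $S^1$-bundle.} With trivial holonomy, every leaf of $\widehat\fol$ is a circle with an isomorphic tubular neighborhood, and Epstein's theorem identifies $\widehat M\to\widehat M/\widehat\fol$ with a locally trivial $S^1$-fibration over a compact manifold $B$. The $\pi_1$-injectivity of each leaf lifts, yielding a central extension
\[
1\to\Z\to\pi_1(\widehat M)\to\pi_1(B)\to 1,
\]
where the kernel is generated by a generic fibre. After at most one more finite cover I would arrange the bundle $\widehat M\to B$ to be \emph{principal}, so that $\widehat M$ carries a smooth free $S^1$-action whose orbits are the leaves of $\widehat\fol$.

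\emph{Step 3: vanishing under a free $S^1$-action.} Finally I would invoke (or reprove) the vanishing of integral foliated simplicial volume under a smooth free $S^1$-action, in the spirit of Fauser's theorem. Concretely: choose a triangulation $T$ of $\widehat M$ subordinate to the fibration; take as parameter space $X=S^1$ with the Haar probability measure, with $\pi_1(\widehat M)$ acting via the central subgroup $\Z\subset\pi_1(\widehat M)$ by an irrational rotation (to make the action essentially free); then parametrise a fundamental cycle of $T$ by averaging its simplices around the fibre. Because every fibre lies inside a single $\pi_1(\widehat M)$-orbit in $X$ via this action, the $L^1$-norm of the resulting parametrised fundamental cycle can be made arbitrarily small by refining $T$ along the fibre, exactly as in Fauser's argument.

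\emph{Main obstacle.} The most delicate step I expect is Step 1, where one must ensure that the finite cover trivializing the holonomy is \emph{globally} finite-sheeted and that the boundary $\partial M$ lifts in a way compatible with the (relative) definition of $\oldnorm{\,\cdot\,,\partial(\,\cdot\,)}$; this is essentially the content of Epstein-type theorems on circle foliations with finite holonomy. A secondary difficulty lies in Step 2, where reducing the structure group of a circle bundle to $S^1$ may require a further finite cover (via the mapping class group of $S^1$), and one has to check this does not destroy the $\pi_1$-injectivity. Once those structural reductions are in place, Step 3 is a direct application of the smearing technique and should follow by now-standard estimates.
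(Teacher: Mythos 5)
Your Step~1 is the gap. The claim that a circle foliation with finite holonomy admits a finite cover on which the pulled-back foliation has trivial holonomy (or is even just induced by a group action) is not a standard construction; it is precisely the question that this paper flags as open in its introduction (``to the best of our knowledge, it is not known whether an arbitrary smooth manifold with a foliation by circles with finite holonomy admits a finite cover with a group action''). The local holonomy covers $\bar L\to L$ of individual leaves do not assemble into a global finite cover of $M$: the obstruction is essentially whether the orbifold $M/\fol$ is finitely covered by a manifold, and compact orbifolds can fail to admit \emph{any} manifold cover (bad orbifolds), so ``pulling $M$ back along an orbifold cover of $M/\fol$'' is not available in general. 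If Steps~1--2 were available, the theorem would indeed follow from multiplicativity of the integral foliated simplicial volume under finite covers together with Fauser's theorem --- the authors say exactly this --- which is why they do not take this route.

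What the paper does instead is keep the foliation as it is and run Yano's hollowing procedure on a triangulation of $M/\fol$ adapted to the holonomy stratification, successively removing the preimages of the skeleta until only the principal stratum remains; the final stage $M_{n-2}\to M_{n-2}/\fol_{n-2}$ is then an honest circle bundle over a space homotopy equivalent to a finite graph. Only there, and only if that bundle is non-orientable, does a double cover enter, and the resulting parametrized chains are transported back not by covering multiplicativity but via the induced parameter space $\Gamma_{n-2}\times_H Z$ (Setup~\ref{Setup: restrictions of representations}). The small relative fundamental cycle of $M$ is then assembled from a small cycle on $M_{n-2}$ --- produced by the smearing-along-the-fibre argument you correctly describe in Step~3 --- together with the controlled fillings $w_{i_1,\dots,i_k}$ of Lemma~\ref{L: efficient fillings} over the hollow walls. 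So your Step~3 is the right ingredient applied in the wrong place, and the structural reduction that would let you apply it globally is an unproven (and possibly false) assertion rather than a lemma.
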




The work of Schmidt in \cite{Schmidt2005} provides the following corollary to Theorem~\ref{Theorem: circle foliation implies the vanishing of foliated simplicial volume}.

\begin{corollary}
Let $(M,\fol)$ be a closed connected oriented aspherical smooth manifold, and $\fol$ a regular smooth circle foliation, such that the inclusion of each leaf is $\pi_1$-injective and has finite holonomy. Then the $L^2$-Betti numbers of $M$ vanish. In particular $\chi(M)=0$.
\end{corollary}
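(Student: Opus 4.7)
The plan is to combine Theorem~\ref{Theorem: circle foliation implies the vanishing of foliated simplicial volume} with the Poincar\'e duality estimate for $L^2$-Betti numbers established by Schmidt in \cite{Schmidt2005}. The first step is essentially automatic: since $M$ is closed, $\partial M = \emptyset$, and the main theorem directly yields
\[
	\oldnorm{M} = 0.
\]

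For the second step I would invoke Schmidt's main result, which for a closed oriented aspherical manifold bounds each $L^2$-Betti number above by the integral foliated simplicial volume, so that $b_i^{(2)}(M) \leq \oldnorm{M}$ in every degree $i$. Combined with the vanishing obtained in the first step and the asphericity hypothesis on $M$, this forces $b_i^{(2)}(M) = 0$ for all $i$.

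To finish, I would invoke Atiyah's $L^2$-index formula expressing the Euler characteristic of a closed manifold as the alternating sum of its $L^2$-Betti numbers,
\[
	\chi(M) = \sum_{i=0}^{\dim M} (-1)^i b_i^{(2)}(M),
\]
which immediately yields $\chi(M) = 0$.

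The corollary is therefore a short formal deduction: once Theorem~\ref{Theorem: circle foliation implies the vanishing of foliated simplicial volume} is in hand, the remaining work is entirely absorbed into Schmidt's Poincar\'e duality estimate and Atiyah's formula. I do not expect any genuine obstacle in this step; the only care needed is to verify that the hypotheses of Schmidt's theorem (closed, connected, oriented, aspherical) are all present in our setup, which they are by assumption.
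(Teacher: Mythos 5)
Your deduction is correct and matches the paper's (implicit) argument exactly: the paper simply cites Schmidt's Poincar\'e duality bound on the $L^2$-Betti numbers in terms of the integral foliated simplicial volume, combined with Theorem~\ref{Theorem: circle foliation implies the vanishing of foliated simplicial volume} and Atiyah's $L^2$-index formula. The only cosmetic point is that Schmidt's estimate carries a multiplicative constant (a binomial coefficient depending on the degree and the dimension), which is of course irrelevant once the right-hand side vanishes.
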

The work of Löh in \cite{Loeh2019} provides another corollary to Theorem~\ref{Theorem: circle foliation implies the vanishing of foliated simplicial volume}, involving a different invariant, namely the cost. The cost is a randomized version of the minimal number of generators of a group.
\begin{corollary}
Let $(M,\fol)$ be a closed connected oriented aspherical smooth manifold, and $\fol$ a regular smooth circle foliation, such that the inclusion of each leaf is $\pi_1$-injective and has finite holonomy. Then $\mathrm{Cost}(\pi_1(M))=1$, that is the manifold $M$ is cheap.
\end{corollary}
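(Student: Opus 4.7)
The plan is to derive the corollary as a direct combination of Theorem~\ref{Theorem: circle foliation implies the vanishing of foliated simplicial volume} with the main result of \cite{Loeh2019}, which bounds the cost of the fundamental group of a closed aspherical smooth manifold in terms of its integral foliated simplicial volume. Since the corollary is presented as a consequence of the main theorem, essentially all the work is already packaged in these two statements, and the proof amounts to verifying that their hypotheses match our setting.

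First I would apply Theorem~\ref{Theorem: circle foliation implies the vanishing of foliated simplicial volume} to the pair $(M,\fol)$. Since $M$ is closed we have $\partial M = \emptyset$, so the theorem yields
\[
	\oldnorm{M} = 0.
\]
Before invoking the cost estimate, I would verify that $\pi_1(M)$ is infinite: because each leaf of $\fol$ is a circle whose inclusion into $M$ is $\pi_1$-injective, the group $\pi_1(M)$ contains an infinite cyclic subgroup. Hence the cost of $\pi_1(M)$ is a meaningful invariant in the sense of Gaboriau.

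Next I would invoke Löh's theorem \cite{Loeh2019}, which states that for a closed connected aspherical smooth manifold whose integral foliated simplicial volume vanishes, the cost of the fundamental group equals $1$. Combining this with the vanishing statement above gives $\mathrm{Cost}(\pi_1(M)) = 1$, i.e.\ $M$ is cheap.

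The only subtlety I anticipate is matching the normalizations: one needs to check that the absolute integral foliated simplicial volume used in \cite{Loeh2019} agrees with the quantity $\oldnorm{M}$ provided here in the closed (boundaryless) case, and that the aspherical hypothesis in Löh's theorem is exactly what we assume. I do not expect this to produce any genuine technical obstacle, and the content of the corollary really lies entirely in Theorem~\ref{Theorem: circle foliation implies the vanishing of foliated simplicial volume}.
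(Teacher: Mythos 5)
Your proposal is correct and is exactly the route the paper intends: the corollary is stated as an immediate consequence of Theorem~\ref{Theorem: circle foliation implies the vanishing of foliated simplicial volume} combined with Löh's inequality bounding $\mathrm{Cost}(\pi_1(M))-1$ by the integral foliated simplicial volume of a closed aspherical manifold. Your extra check that $\pi_1(M)$ is infinite (via the $\pi_1$-injective circle leaves), so that the cost is at least $1$, is a sensible detail that the paper leaves implicit.
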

%


Given a smooth manifold $M$, we may consider different notions of  ``symmetry" for $M$. A general notion is to  split $M$ into a family of submanifolds, that retain certain desired geometric or topological properties. In \cite{CheegerGromov1986}, Cheeger and Gromov studied a type of decomposition called an $F$-structure. This is a direct generalization of a smooth torus action on a smooth manifold. In particular, they show that if a Riemannian manifold $M$ admits a polarized  $F$-structure of positive dimension, then the minimal volume of $M$ vanishes. Since the minimal volume dominates the simplicial volume, the simplicial volume of $M$ also vanishes. Furthermore, in this case the Euler characteristic of $M$ is also zero (see \cite[Proposition~1.5]{CheegerGromov1986} \cite[p.~75]{Pansu1983}).


In the particular case when the splitting of $M$ is given by the orbits of a smooth circle action, the Corollary to the Vanishing Theorem \cite[p. 41]{Gromov1982} implies that the simplicial volume of $M$ vanishes. This fact was independently proven by Yano in \cite{Yano1982}. His proof is quite geometric and relies heavily on the stratification of the orbit space by orbit type. This proof has been extended to the setting of the integral foliated simplicial volume by Fauser in \cite{Fauser2019}, when one adds the assumption that the inclusion of every orbit in $M$ is injective at the level of fundamental groups. 
We point out that this condition is not satisfied when the action has fixed points.
A more general notion of symmetry can be found in the context of foliations.  They arise naturally as solutions to differential equations (see \cite[Chapter~1]{Moerdijk}). Moreover, smooth group actions, as well as the fibers of a smooth fiber bundle, are some examples of  smooth foliations.

We follow the approach of \cite{Fauser2019,Yano1982} to prove Theorem~\ref{Theorem: circle foliation implies the vanishing of foliated simplicial volume}. The hypothesis requiring the holonomy groups to be finite is necessary to obtain the stratification of the leaf space. Indeed,  Sullivan in \cite{Sullivan1976} constructed a $5$-dimensional smooth manifold with a foliation by circles, with leaves having infinite holonomy. For this particular example the leaf space does not admit an orbifold structure.  Furthermore, the condition that  for any  leaf its inclusion into the manifold induces an injective map between the fundamental groups is also necessary for our proof. Simple examples of a regular foliation by circles where the inclusion is not $\pi_1$-injective are given by the Hopf fibrations $\Sp^{2n+1}\to \C P^{n}$.  

Any smooth circle action gives rise to a circle foliation, but there are many instances of circle foliations not coming from circle actions. A simple example is given by considering non-orientable circle bundles over a non-orientable base. For example, the unit tangent bundle of a closed non-orientable surface  is a circle foliation not coming from a circle action. This  holds since the unit tangent bundle is not a principal circle bundle: if it were, then the structure group of the tangent bundle would be $SO(2)$, but since the surface is non-orientable the structure group of the tangent bundle cannot be reduced to $SO(2)$. Also observe that the total space of the tangent bundle of any manifold is an orientable manifold. In particular the tangent bundle of the  Möbius band $\{(\theta,y)\in [0,1]\times [-1,1]\}/(0,y)\sim (1,-y)$ is diffeomorphic to $\Sp^1\times [-1,1]\times\R^2$. Another concrete example is given by the unit tangent bundle of the Klein bottle. We remark that for these examples, there is a finite cover of the manifold with a smooth circle action such that the orbits of the action cover the leaves of the foliation. Other non-homogeneous examples are given by non-orientable Seifert fibrations over a non-orientable surface (see \cite[Section~5.2]{Orlik}). We point out that the universal cover of a Seifert fibration is one of $\Sp^3$, $\Sp^2\times \R$ and $\R^3$ (see \cite[Proposition~1]{Preaux2014}), and they all admit a smooth circle action. We remark that, for these cases, the conclusion of the main theorem can also be obtained by the work of Fauser, Friedl, Löh in \cite[Theorem 1.7]{FauserFriedlLoeh2019}.

To prove Theorem  \ref{Theorem: circle foliation implies the vanishing of foliated simplicial volume}, we need to check that Yano's and Fauser's techniques from \cite{Fauser2019,Yano1982} apply in the case of foliations instead of circle actions. After the hollowing procedure, we are left with a certain circle bundle $M_{n-2}\to M_{n-2}/\fol_{n-2}$. If it is an orientable bundle, our proof is a straightforward translation of the proof in \cite{Fauser2019}. But the bundle can be non-orientable over a non-orientable base (thus still having orientable total space): this is a novelty of foliations with respect to circle actions. Then we need an additional argument that exploits the flexibility of the integral foliated simplicial volume (see Section \ref{S: Proof}, in particular Setup \ref{Setup: restrictions of representations}).

In general, regular foliations by circles are instances of singular circle fiberings over polyhedra, introduced by Edmonds and Fintushel in \cite{EdmondsFintushel1976}. They proved that a singular circle fibering on a smooth manifold is given by a group action if and only if the bundle part of the fibering is orientable (see \cite[Theorem~3.8]{EdmondsFintushel1976}). To the best of our knowledge, it is not known whether an arbitrary smooth manifold with a foliation by circles with finite holonomy admits a finite cover with a group action. If so, our main result would follow by multiplicativity of the integral foliated simplicial volume and the main result in \cite{Fauser2019}.

The condition of $\pi_1$-injectivity is a technical one: it is used so that we have only a global representation of $\pi_1(M)$ over a fixed essentially free standard Borel space to consider, instead of a potentially different one for every leaf. This technicality might be overcome by constructing a suitable family of representations and essentially free standard Borel spaces at each step of the hollowing construction (the hollowing construction is presented in Section \ref{S: Hollowings}).


We organize the present note as follows: in Section~\ref{S: Preliminaries} we present the preliminaries as well as a series of clarifying examples. In Section~\ref{S: Proof} we prove Theorem~\ref{Theorem: circle foliation implies the vanishing of foliated simplicial volume}.

\vspace*{1em}	
	
\begin{ack}
We thank Roman Sauer for pointing us in the direction of studying the integral foliated simplicial volume for foliated manifolds. We also thank Michelle Bucher, Daniel Fauser, and Fernando Galaz-Garcia for useful conversations during the preparation of the present manuscript, and Clara Löh for very useful comments about the non-orientable case. 
The authors acknowledge funding by the Deutsche Forschungsgemeinschaft (DFG, German Research Foundation) – 281869850 (RTG 2229).
\end{ack}	

\section{Preliminaries}\label{S: Preliminaries}

\subsection{Simplicial volume}

We begin by defining the \emph{simplicial volume}, also known as the \emph{Gromov invariant} or \emph{Gromov norm}, of a topological compact connected oriented manifold $M$,  with possibly empty boundary $\partial M$. Given a  singular $k$-chain $z=\sum_ia_i\sigma_i\in C_k(M, \mathbb{R})$, we define its \emph{$\ell_1$-norm} as
\[\|z\|_1=\sum_i |a_i| \,,\]
where $\sigma_i\colon \Delta^k\rightarrow M$ is a singular simplex of dimension $k$.

The \emph{simplicial volume} of $M$ is the infimum over the $\ell_1$-norms of the (relative) real cycles representing the fundamental class:
\[\|M, \partial M\|=\inf\left\{\sum_i |a_i|\,\vline\, [M, \partial M]=\left[\sum_i a_i\sigma_i\right]\in H_n(M, \partial M; \mathbb{R})\right\}.\]


\subsection{Integral foliated simplicial volume}

Consider a topological connected manifold $M$ and its universal cover $\widetilde{M}$. Denote by  $\Gamma=\pi_1(M)$ the fundamental group of $M$. Then $\Gamma$ acts on $\widetilde{M}$ by deck transformations. This induces a natural action of $\Gamma$ on $C_k(\widetilde{M}, \mathbb{Z})$. There is a natural identification
\[C_k(M, \mathbb{Z})\cong \mathbb{Z}\otimes_{\mathbb{Z}\Gamma}C_k(\widetilde{M}, \mathbb{Z}).\]

A \emph{standard Borel space} is a measurable space that is isomorphic to a Polish space with its Borel $\sigma$-algebra $\mathcal{B}$. Recall that a \emph{Polish space} is a separable completely metrizable topological space. Let $Z$ be a \emph{standard Borel probability space}, that is, a standard Borel space endowed with a probability measure $\mu$. Suppose now that $\Gamma$ acts (on the left) on such a standard Borel probability space $(Z, \mathcal{B}, \mu)$  
in a measurable and measure-preserving way. Denote this action by $\alpha\colon \Gamma\rightarrow \mathrm{Aut}(Z, \mu)$. Set
\[
L^\infty(Z, \mu;\mathbb{Z})=\left\{f\colon Z\longrightarrow \mathbb{Z}\,\vline\, \int_Z |f|d\mu<\infty\,\right\}.
\]
We define a right $\Gamma$-action on $L^\infty(Z, \mu;\mathbb{Z})$ by setting 
\[
(f\cdot \gamma)(z)=f(\gamma z), \,\forall f\in L^\infty(Z, \mu;\mathbb{Z}), \,\forall \gamma\in\Gamma, \, \forall z\in Z.
\]
There is a natural inclusion  for any $k\in \mathbb{N}$:
\[
\begin{array}{lrll}
i_\alpha\colon &C_k(M, \mathbb{Z})\cong \mathbb{Z}\otimes_{\mathbb{Z}\Gamma}C_k(\widetilde{M}, \mathbb{Z})&\longrightarrow &L^\infty(Z, \mu;\mathbb{Z})\otimes_{\mathbb{Z}\Gamma}C_k(\widetilde{M}, \mathbb{Z})\\
&1\otimes \sigma& \longmapsto &\mathrm{const}_1\otimes \sigma,
\end{array} 
\]
where the tensor products are taken over the given actions. We will write $C_\ast(M; \alpha)$ for the complex $L^\infty(Z, \mu;\mathbb{Z})\otimes_{\mathbb{Z}\Gamma}C_\ast(\widetilde{M}, \mathbb{Z})$. 
We call its elements \emph{parametrized chains}.

Given a parametrized $k$-chain $z=\sum_if_i\otimes\sigma_i\in L^\infty(Z, \mu;\mathbb{Z})\otimes_{\mathbb{Z}\Gamma}C_k(\widetilde{M}, \mathbb{Z})$ we define its \emph{parametrized $\ell_1$-norm} as \[|z|_1=\sum_i \int_Z |f_i|d\mu ,\]
where $\sigma_i\colon \Delta^k\rightarrow M$ is a singular simplex of dimension $k$. Here we assume that $z$ is in reduced form, that is, all the singular simplices $\sigma_i$ belong to different $\Gamma$-orbits. For more details on the integral foliated simplicial volume, see for example \cite{LoehPagliantini2016}.

Note that the action of $\Gamma$ on $\widetilde{M}$ restricts to the preimage of the boundary of $M$ under the covering map $p\colon\widetilde M\rightarrow M$. Thus we get an action 
\[\Gamma\longrightarrow \mathrm{Homeo}(p^{-1}(\partial M)).\]
This restricted action allows us to define the subcomplex
\[C_*(\partial M; \alpha)=L^\infty(Z, \mu;\mathbb{Z})\otimes_{\mathbb{Z}\Gamma}C_*(p^{-1}({\partial M}), \mathbb{Z})\subset C_*(M; \alpha),\]
and hence the quotient
\[C_*(M, \partial M; \alpha)=C_*(M; \alpha)/C_*(\partial M; \alpha).\]
This last quotient is naturally isomorphic to the chain complex 
\[
L^\infty(Z, \mu;\mathbb{Z})\otimes_{\mathbb{Z}\Gamma}C_\ast(\widetilde{M}, p^{-1}(\partial M); \mathbb{Z}).
\]
From now on we set:
\[
H_*(M, \partial M; \alpha)=H_*(C_*(M, \partial M; \alpha)).
\]
The \emph{(relative) integral foliated simplicial volume} of $M$ is the infimum over the parametrized $\ell_1$-norms of the (relative) parametrized cycles representing the fundamental class:
\[
\oldnorm{M, \partial M} =\inf_{\alpha, (Z, \mu)}\left\{\oldnorm{ M, \partial M}^\alpha\,\vline\, \alpha\colon \Gamma\longrightarrow \mathrm{Aut}(Z, \mu)\right\}, 
\]
where $\oldnorm{ M, \partial M}^\alpha$ is given by
\[
\inf\left\{\sum_i  \int_Z |f_i|d\mu\,\vline\, [M, \partial M]^\alpha=\left[\sum_i f_i\otimes\sigma_i\right]\in H_n(M, \partial M; \alpha)\right\}.
\]
Here $[M, \partial M]^\alpha$ denotes the image of the fundamental class $[M, \partial M]\in H_n(M, \partial M;\mathbb{Z})$ in $H_n(M, \partial M; \alpha)$ under the map induced by $i_\alpha$.
\begin{remark}
For every compact connected oriented $n$-manifold $M$, we have the following inequality (see \cite[Proposition 4.6]{LoehPagliantini2016}):
\[
\|M, \partial M\|\leq \oldnorm{M, \partial M}.
\]
\end{remark}
From now on, we consider only smooth connected orientable manifolds.
 
\subsection{Foliations}

We proceed to define a smooth regular foliation on  a\linebreak smooth connected manifold  $M$ and state some of its properties. 
A smooth \emph{regular foliation}  of $M$ is a partition $\fol = \{L_p\mid p\in M\}$ of $M$ where each \emph{leaf} $L_p$ is an embedded smooth submanifold $L_p\subset M$, called the \emph{leaf through $p$}, and such that the tangent spaces of the leaves are smooth  subbundles of $TM$ (see \cite[p.~9 (iii)]{Moerdijk}).
\begin{remark}
Observe that there might exist foliations where the leaves are not embedded submanifolds, cf.\ to \cite[Section~1.1]{Moerdijk}, but we will not consider such foliations in the present work. 
\end{remark}
The following phenomena give rise to foliations: Let $p\colon M\to B$ be a smooth submersion. Then the partition induced by the fibers of $p$ produces a foliation. More generally, any involutive subbundle of $TM$ induces a foliation on $M$. Given a compact Lie group $G$ acting smoothly on $M$, we define the \emph{isotropy subgroup} at $p\in M$ as $G_p = \{g\in G\mid g\cdot p = p\}$. 
We say that a compact Lie group $G$ acts \emph{almost freely} on $M$ if $G_p$ is finite for any point $p\in M$. The set $G(p)= \{g\cdot p \mid g\in G\}$ is called the \emph{orbit through $p$}. The partition given by the orbits defines a regular foliation $(M,\fol)$ if the dimension of the isotropy groups $G_p$ is constant with respect to $p$ 
(see \cite[Theorem~3.65 and Example~5.3]{Alexandrino}\cite[p.~16]{Moerdijk}). We will refer to such foliations as \emph{homogeneous foliations}.
The \emph{codimension} of a foliation on a connected manifold is the codimension of any leaf.  
The quotient space $M/\fol$ induced by the partition, equipped with the quotient topology, is called the \emph{leaf space of $\fol$}. We denote the projection map by $\pi\colon M\to M/\fol$, and the image under $\pi$ of a subset $A\subset M$ by $A^*$.


\subsubsection{Holonomy}

Given a leaf $L\in\fol$ and a point $x\in L$, we can select a transversal section $T$  at $x$, \emph{i.e.} a linear subspace of $T_x M$ such that $T_x M = T_xL\oplus T$. There is a well defined action of $\pi_1(L,x)$ on $T$ as follows: given a closed loop $\alpha\colon [0,1]\to L$, starting at $x$, and a vector $v\in T$, we may use foliated charts to extend $v$ to a vector field $V$ on $\alpha$, such that if $\gamma_t\colon [0,1]\to M$ is the integral curve of $V(\alpha(t))$, then $\gamma_t(1)$ is contained in the same leaf of $\fol$ for all $t\in[0,1]$. 
This action only depends on the homotopy class of the loop $\alpha$. Furthermore this action is independent of the chosen transversal $T$ (see \cite[2.1]{Moerdijk}). Let $q$ denote the codimension of the foliation; then by identifying $T$ with $\R^q$ we get a group morphism:
\[
	\hol(L,x)\colon \pi_1(L,x)\longrightarrow \Diff_0(\R^q).
\]
Here $\Diff_0(\R^q)$ denotes the group of germs of diffeomorphisms of $\R^q$ fixing the origin. Observe that the identification of $T$ with $\R^q$ is via foliated charts. Thus the map $\Hol(L,x)$ is determined up to conjugation in $\Diff_0(\R^q)$ (see \cite[p.~23]{Moerdijk}). We define the \emph{holonomy group of the leaf $L$}, denoted by  $\Hol(L,x)$, to be  the image of $\pi_1(L,x)$ under the morphism $\hol(L, x)$. This group is independent of the base point $x$ up to conjugation in $\Diff_0(\R^q)$. We denote by $K$ the kernel of the short exact sequence:
\begin{equation}\label{EQ: holonomy short exact sequence}
1\longrightarrow K\longrightarrow \pi_1(L,x)\longrightarrow \Hol(L,x)\longrightarrow 1.
\end{equation}
The holonomy group of a leaf measures roughly ``how twisted" the foliation is around the leaf. A foliation induced by a submersion has trivial holonomy for any leaf. Observe that, by considering germs of the derivative at the origin, we obtain a new representation:
\[
	\mathrm{Dhol}(L,x)\colon\pi_1(L,x) \longrightarrow \mathrm{GL}_q(\R).
\]
Given a homogeneous foliation by a compact Lie group $G$, via an auxiliary equivariant metric we note that the isotropy group $G_p$ at $p$ acts on a transversal section $T$ at $p$. The holonomy is the quotient of the connected component $(G_p)_0$ containing the identity by the kernel of the action (see \cite[Section~3.1]{Radeschi15}). As in the case of group actions, a \emph{principal leaf} is a leaf  with trivial holonomy.

We now describe a tubular neighborhood of a leaf $L$ in $M$ with finite holonomy. Let $\bar{L}\to L$ be the covering space of $L$ associated with the subgroup $K$ in \eqref{EQ: holonomy short exact sequence}. Consider a transversal section $T$ at a point $p\in L$ and the holonomy action of the fundamental group $\pi_1(L, p)$ on $T$ described above. Then we have the following theorem:

\begin{thm}[Local Reeb stability, Theorem~2.9 in \cite{Moerdijk}]\label{T: Tubular Neighborhood of compact leaf}
Let $(M,\fol)$ be a regular foliation. For a compact leaf $L$ with finite holonomy $H= \Hol(L,p)$, there exists a foliated open neighborhood $V$ of $L$ in $M$ and a diffeomorphism
\[
	\bar{L}\times_{H} T \longrightarrow V.
\]
\end{thm}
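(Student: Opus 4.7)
My plan is to prove the Local Reeb Stability Theorem in the standard way by constructing the diffeomorphism explicitly, using finiteness of $H$ to upgrade germs to honest diffeomorphisms and compactness of $L$ to make the construction global.

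First, I would set up the data. Since $H = \Hol(L,p)$ is finite, the covering $\bar{L}\to L$ associated to $K$ is a finite regular covering with deck transformation group $H$. The holonomy representation, being valued in germs, a priori only gives a homomorphism $\pi_1(L,p)\to \Diff_0(\R^q)$; however, since $H$ is finite and $L$ is compact, I would first cover $L$ with finitely many foliated charts, pick a finite generating set of loops for $H$, and shrink the transversal $T$ to a small enough neighborhood of the origin so that each generator is represented by an honest diffeomorphism of $T$. Finiteness of $H$ then lets me further shrink $T$ to an $H$-invariant open neighborhood on which $H$ acts by diffeomorphisms fixing the origin.

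Next, I would define the candidate map. Fix a base lift $\bar{p}\in\bar{L}$. For $(\bar{x},t)\in\bar{L}\times T$, choose a path $\bar\gamma$ in $\bar{L}$ from $\bar{p}$ to $\bar{x}$, project it to a path $\gamma$ in $L$ from $p$ to $x$, and extend $t\in T$ along $\gamma$ via foliated charts to obtain a point $\Phi(\bar{x},t)\in M$ lying in the leaf through the terminal point of the extension. To see this is well defined, observe that any two choices of $\bar\gamma$ differ by a loop in $\bar{L}$, which descends to a loop in $L$ whose class in $\pi_1(L,p)$ lies in $K$; such loops act trivially on $T$ by definition of $K$, so the construction does not depend on the path. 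The map $\Phi\colon \bar{L}\times T\to M$ is smooth, and by construction $\Phi(h\cdot\bar{x},h\cdot t)=\Phi(\bar{x},t)$ for every $h\in H$: right multiplying $\bar\gamma$ by a deck transformation $h$ replaces the lift of the loop representing $h$, whose holonomy action on $T$ is exactly the $H$-action we just introduced. Thus $\Phi$ descends to a map $\bar\Phi\colon \bar{L}\times_H T\to M$.

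Finally, I would verify that $\bar\Phi$ is a diffeomorphism onto a foliated open neighborhood $V$ of $L$. The zero section of $T$ is mapped bijectively onto $L$ (via the covering projection), and the derivative of $\bar\Phi$ at any point of $\bar{L}\times_H \{0\}$ is an isomorphism because in each foliated chart the construction is literally the product of the plaque direction with the transversal direction. By the inverse function theorem, $\bar\Phi$ is a local diffeomorphism near $\bar{L}\times_H\{0\}$. Compactness of $L$ (hence of $\bar{L}/H=L$) lets me shrink $T$ once more so that $\bar\Phi$ is injective on $\bar{L}\times_H T$; this is the main technical step, where I would argue that failure of injectivity in arbitrarily small transversal neighborhoods would, by a compactness-and-sequences argument, contradict the local diffeomorphism property at some point of the leaf. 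With injectivity established, $V:=\bar\Phi(\bar{L}\times_H T)$ is open in $M$, and the resulting diffeomorphism is clearly foliated, each leaf of $V$ being the image of some $\bar{L}\times_H \{[t]\}$. The main obstacle is precisely the global injectivity step; everything else is local and follows from the normal form for foliations in charts.
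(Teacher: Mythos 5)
The paper does not prove this statement: it is quoted verbatim as Theorem~2.9 of Moerdijk--Mr\v{c}un and used as a black box, so there is no internal proof to compare against. Your proposal is, in outline, exactly the standard proof given in that reference: realize $\bar L\to L$ as the finite regular cover with deck group $H$, define the map by holonomy transport of $t\in T$ along (lifts of) paths in $L$, check path-independence via the kernel $K$, check $H$-equivariance so the map descends to $\bar L\times_H T$, and then combine the inverse function theorem along the zero section with a compactness argument to shrink $T$ until the map is injective, hence a foliated diffeomorphism onto an open saturated neighborhood. That is the right architecture and each step is sound.

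One place where your sketch glosses over a genuine technical point is the well-definedness step. Elements of $K$ have trivial holonomy only as \emph{germs} at the origin of $T$; after you fix a concrete $T$, a loop representing a class in $K$ need not act as the identity on all of $T$, and an arbitrary loop in $\bar L$ is a word in generators whose successive transports may leave $T$ before the composition is completed. The standard fix (and the one implicit in Moerdijk--Mr\v{c}un) is to work with holonomy of finite chains of plaques coming from a fixed finite foliated atlas of the compact leaf, and to shrink $T$ uniformly so that all transports along chains from this finite combinatorial data are defined on $T$ and so that the finitely many relevant elements of $K$ act as the identity there. This is the same flavor of shrinking you already invoke for upgrading germs of generators of $H$ to diffeomorphisms, so it does not change the structure of your argument, but it should be stated explicitly rather than attributed to ``the definition of $K$'' alone. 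With that caveat addressed, the proposal is a correct reconstruction of the cited proof.
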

In the case when $\fol$ is induced by a group action, Theorem~\ref{T: Tubular Neighborhood of compact leaf} is known as the Slice Theorem (see \cite[Theorem~3.57]{Alexandrino}). We say that a regular smooth foliation $\fol$ has \emph{finite holonomy} if every leaf has finite holonomy. For more information on foliations, see for example \cite{Moerdijk, CandelConlon2000}.


\subsubsection{Orbifolds}
We recall the notion of an orbifold for the sake of completeness. Consider a topological space $X$, and fix $n\geqslant 0$. An \emph{$n$-dimensional orbifold chart} on $X$ is given by an open subset $\widetilde{U}\subset \R^n$, a finite group $G$ of smooth automorphisms of $\widetilde{U}$, and a $G$-invariant map $\phi\colon \widetilde{U}\to X$, which induces a homeomorphism of $\widetilde{U}/G$ onto some open subset $U\subset X$. 
By $G$-invariant we mean that $\phi(g\cdot u) =\phi(u)$ for any $g\in G$ and $u\in \widetilde{U}$. An \emph{embedding} $\lambda\colon (\widetilde{U},G,\phi)\hookrightarrow (\widetilde{V},H,\psi)$ between two orbifold charts is a  smooth embedding $\lambda\colon \widetilde{U}\to \widetilde{V}$ such that $\psi\lambda = \phi$. 
Given two charts $(\widetilde{U},G,\phi)$ and $(\widetilde{V},H,\psi)$ with $\phi(\widetilde{U}) = U$ and $\psi(\widetilde{V}) = V$,  we say that they are compatible if for any $x\in U\cap V$ there exists $W \subset U\cap V$, an open neighborhood of $x$, and a chart $(\widetilde{W},K,\mu)$ with $\mu(\widetilde{W}) =W$, such that there are embeddings $(\widetilde{W},K,\mu)\hookrightarrow (\widetilde{U},G,\phi)$ and $(\widetilde{W},K,\mu)\hookrightarrow (\widetilde{V},H,\psi)$. 
The space $X$ equipped with an atlas of orbifold charts is called an \emph{orbifold}, and we use the notation $\mathcal{O}_X$ to distinguish it from the original topological space $X$. 

Let $(M, \fol)$ be a regular foliation with all leaves compact with finite holonomy. From Theorem~\ref{T: Tubular Neighborhood of compact leaf}, an open neighborhood of $p^*\in M/\fol$ is given by $T/\Hol(L_p,p)$. Thus, the transversal sections to the leaves, the holonomy, and the projection map $\pi$ induce an orbifold atlas on $M/\fol$.

\begin{thm}[Theorem~2.15 in \cite{Moerdijk}]\label{T: Orbifold structure of the leaf space}
Let $(M,\fol)$ be a foliation of codimension $q$ such that any leaf of $\fol$ is compact with finite holonomy group. Then the space of leaves $M/\fol$ has a canonical orbifold structure of dimension $q$.
\end{thm}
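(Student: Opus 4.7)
The plan is to use Local Reeb Stability (Theorem~\ref{T: Tubular Neighborhood of compact leaf}) as the black box that produces candidate orbifold charts, and then verify the compatibility axiom by using the naturality of the holonomy action on transversals.

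First I would construct an atlas. For each leaf $L$, pick $p\in L$, a transversal $T_p\subset T_pM$ of dimension $q$ identified with an open subset of $\R^q$ via a foliated chart, and let $H=\Hol(L,p)$ be the (finite) holonomy group acting on $T_p$. Local Reeb Stability gives a foliated tube $V\cong \bar L\times_H T_p$. The leaves inside $V$ are exactly the images of the sets $\bar L\times_H\{t\}$, so the restriction of the projection $\pi\colon M\to M/\fol$ to $V$ factors as
\[
V\;\cong\;\bar L\times_H T_p \;\xrightarrow{\ \mathrm{pr}_2\ }\; T_p/H \;\hookrightarrow\; M/\fol.
\]
This gives an orbifold chart $(T_p,H,\phi_L)$ on $M/\fol$, where $\phi_L$ sends $t\in T_p$ to the leaf through the image of $(p,t)$. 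The map $\phi_L$ is $H$-invariant by construction and descends to a homeomorphism from $T_p/H$ onto the open set $\pi(V)\subset M/\fol$. Varying $L$ yields a family of charts covering $M/\fol$.

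Next I would check compatibility. Suppose $(T_1,H_1,\phi_1)$ and $(T_2,H_2,\phi_2)$ are charts coming from leaves $L_1,L_2$ with tubes $V_1,V_2$, and let $x^*\in \phi_1(T_1)\cap\phi_2(T_2)$. Then $x^*$ is a leaf $L$ meeting both $V_1$ and $V_2$. Applying Local Reeb Stability to $L$ inside $V_1\cap V_2$ produces a tube $V\subset V_1\cap V_2$ with transversal $(T,K,\mu)$. To build the embedding $(T,K,\mu)\hookrightarrow (T_1,H_1,\phi_1)$ I would use the holonomy transport: choose a path in $L$ from a point of $L\cap V$ to a point of $L\cap L_1$ (or rather to a point over which the tube of $L_1$ sits), and use the germ of the holonomy diffeomorphism along this path to identify a neighborhood of $T\cap L$ with an open subset of $T_1$ after further shrinking. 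This diffeomorphism is equivariant for the subgroup of $H$ fixing the starting point, and so descends to a smooth embedding $T\hookrightarrow T_1$ with $\phi_1\circ\lambda=\mu$. Repeating with $T_2$ gives the second embedding. Finally, canonicity follows because the holonomy pseudogroup is an invariant of the foliation, so different choices of transversals at $L$ produce charts related by local diffeomorphisms conjugating the corresponding finite holonomy groups.

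The main obstacle is the equivariance check for the embedding $\lambda$: a priori the holonomy transport along a path in $L$ only intertwines the stabilizers at the two endpoints, not the full groups $K$ and $H_1$. I would resolve this by first shrinking $V$ so that $T$ is contained in the slice of the $H_1$-action on $T_1$ through the relevant point; the stabilizer of that point in $H_1$ is then the full holonomy group $K$ of the inner leaf, and standard slice-type arguments promote the local embedding to a chart embedding in the sense of the orbifold definition given earlier in this section.
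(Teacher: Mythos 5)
Your proof is correct and follows essentially the same route as the paper, which does not prove this result but cites Moerdijk--Mr\v{c}un and merely sketches the chart construction in the sentence preceding the theorem (Local Reeb Stability gives the neighborhood $T/\Hol(L_p,p)$ of $p^\ast$, and the transversals, holonomy groups and projection $\pi$ form the atlas). Your additional verification of chart compatibility via holonomy transport restricted to slices of the stabilizer action is the standard argument from the cited source and correctly handles the equivariance issue you flag.
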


Consider an $n$-dimensional orbifold $X$, and for $x\in X$, let  $(\bar{U},G,\phi)$ be a chart around $x$. Take $y\in \bar{U}$ such that $\phi	(y) =x$. The \emph{local group} or \emph{isotropy} at $x$ is the conjugacy class (see \cite[Section~2.4]{Moerdijk}) in $\Diff_0(\R^n)$ of
\[
	G_x :=  \{g\in G\mid gy=y\}.
\]

Consider $(M,\fol)$ a regular foliation of codimension $q$, with compact leaves of finite holonomy. Given $p^\ast\in M/\fol$, the isotropy $G_p$ is the conjugacy class of $\Hol(L_p,p)$ in $\Diff_0(\R^q)$ (see \cite[Theorem~2.15]{Moerdijk}).

 
\subsection{Triangulation of the leaf space}\label{subs: Triangulation of leaf space}

Let $M$ be a compact manifold. We consider a regular foliation $(M,\fol)$ by circles, \emph{i.e.} any leaf $L$ of $\fol$ is homeomorphic to a circle. We observe that for this case the holonomy of any leaf $L$ is isomorphic to either the trivial group, $\Z/k\Z$,  for $k\in\mathbb{N}_{\geq 2}$, or $\Z$, since $\Z$, $k\Z$ and the trivial group are the only subgroups of $\Z = \pi_1(\Sp^1)$. We will assume from now on that $\Hol(L,x)$ is finite, \emph{i.e.} of the form $\Z/k\Z$ or the trivial group, for any leaf. In this case, by Theorem~\ref{T: Orbifold structure of the leaf space}, the leaf space $M/\fol$ is a compact orbifold. For a general regular foliation $(M,\fol)$ with compact leaves and finite holonomy we will now define a decomposition into strata, and recall that we can triangulate the leaf space $M/\fol$ with respect to this decomposition. The triangulation of the leaf space is such that the interior of each simplex lies in a smooth submanifold contained in $M/\fol$. Furthermore  the projection map $\pi\colon M\to M/\fol$ is a smooth submersion between smooth manifolds over the interior of each simplex.


\subsubsection{The stratification}\label{SSS: stratification}

We recall that a \emph{stratification} of a topological space $X$ is a partition of $X$ into subsets $\{\Sigma_\alpha\}_{\alpha\in \Lambda}$ such that:
\begin{enumerate}[(i)]
\item The partition is locally finite, \emph{i.e.} each compact subset of $X$ only intersects a
finite number of strata.
\item If $\Sigma_\beta\cap\Cl(\Sigma_\alpha)\neq \emptyset$, then $\Sigma_\beta\subset\Cl(\Sigma_\alpha)$.
\item If $X$ is a smooth manifold, the strata $\Sigma_\alpha$ are embedded smooth submanifolds.
\end{enumerate}

Let $H <\Diff_0(\R^n)$ be a finite subgroup. We denote by $(H)$ its conjugacy class in $\Diff_0(\R^n)$. For a regular foliation $(M,\fol)$ with compact leaves and finite holonomy,  we consider the set 
\[
\Sigma_{(H)} =\{p^*\in M/\fol \mid (\Hol(L_p,p))=(H)\}\subset M/\fol.
\]
From the description of the tubular neighborhood of a leaf $L_p$ given by Theorem~\ref{T: Tubular Neighborhood of compact leaf}, we can see that $\{\Sigma_{(H)}\mid H=\Hol(L_p,p)\}$ gives a stratification of $M/\fol$. 
We can describe $\Sigma_{(H)}$ locally as follows: for $p^\ast\in \Sigma_{(H)}$ we consider $\bar{L}_p\times_H T$, the tubular neighborhood of $L_p$ described in Theorem~\ref{T: Tubular Neighborhood of compact leaf}. The points in $\Sigma_{(H)}$ correspond to the projection of the fixed points of the action of $\pi_1(L)$ on $T$. Thus locally, the stratification of $M/\fol$  is  induced from the stratification of the orbit space $T/H$ (see \cite[Section~4.3]{Pflaum}, \cite[Sections~4.2 and 4.3]{Sniatycki}). From the fact that the transverse spaces $T$ to the leaves give an atlas for the orbifold $M/\fol$, we see that indeed the subsets  $\Sigma_{(H)}$ of $M/\fol$ induce a stratification on the  orbifold $M/
\fol$ (see \cite[Section~1.2]{MoerdijkPronk1999}).

\begin{remark}
In general for an orbifold $X$, the isotropy groups yield a stratification of $X$, by setting $\Sigma_{(H)} = \{x\in X\mid (H) = (G_x)\}$.
\end{remark}

For  a foliation by circles $(M,\fol)$ with finite holonomy over  a compact  manifold $M$ we describe the holonomy stratification. The stratification of $M/\fol$ is given by $\{\Sigma_{(\Z/k\Z)}\mid k\in \N\}$. Since $M$ is compact, there are finitely many conjugacy types of holonomy groups. 

\begin{prop}\label{P: dimension of holonomy stratum}
Let $(M,\fol)$ be  an $n$-dimensional manifold with a non-trivial regular foliation of codimension $k$ at most $n-1$.  Assume that  each leaf is closed, and has finite holonomy. For a non-trivial holonomy group, its holonomy stratum has dimension at most $k-1$.
\end{prop}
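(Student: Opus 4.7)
The plan is to reduce the claim to a statement about faithful linear actions of finite groups on $\R^k$, and to bound the dimension of the stratum locally in an orbifold chart.

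First, I would use Theorem~\ref{T: Orbifold structure of the leaf space} to note that $M/\fol$ is an orbifold of dimension $k$. Fix $p^\ast\in\Sigma_{(H)}$ with $H=\Hol(L_p,p)$ non-trivial. By Theorem~\ref{T: Tubular Neighborhood of compact leaf} (Local Reeb Stability), a foliated neighborhood of $L_p$ in $M$ is diffeomorphic to $\bar{L}_p\times_H T$, where $T$ is a transversal of dimension $k$ on which $H$ acts fixing the origin. Passing to the leaf space, a neighborhood of $p^\ast$ is the orbifold chart $T/H$, and in this chart the stratum $\Sigma_{(H)}$ corresponds exactly to the image of the fixed point set $T^H$ (on which $H$ acts trivially). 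So locally $\dim\Sigma_{(H)}=\dim T^H$ and it suffices to prove $\dim T^H\le k-1$.

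Second, I would linearize the action at the fixed origin. Since $H$ is finite, I can average to obtain an $H$-invariant Riemannian metric on $T$, and then invoke Bochner's linearization theorem: a neighborhood of $0\in T$ is $H$-equivariantly diffeomorphic to a neighborhood of $0$ in $T_0T\cong\R^k$ endowed with the linear action $dH_0\subset \GL_k(\R)$. The crucial point is that, by the definition of the holonomy group as the image of $\pi_1(L_p,p)$ in $\Diff_0(\R^k)$, the action of $H$ on germs at the origin is faithful; together with Bochner linearization this forces the representation $H\to \GL(T_0T)$ to be faithful as well (equivalently, an isometry fixing a point is determined by its derivative there).

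Third, conclude the dimension bound. Since $H$ is non-trivial and acts faithfully by linear isomorphisms on $T_0T\cong\R^k$, there exists $h\in H$ with $dh_0\neq \Id$. The fixed subspace $\ker(dh_0-\Id)$ is a proper linear subspace of $\R^k$, hence of dimension at most $k-1$. Since $T^H\subset \mathrm{Fix}(h)$, transporting back through the linearization gives $\dim T^H\le k-1$ in a neighborhood of the origin. As $p^\ast\in\Sigma_{(H)}$ was arbitrary, this yields $\dim\Sigma_{(H)}\le k-1$. The main subtle point I expect is the faithfulness of the linearized action, which is not automatic for a subgroup of $\Diff_0(\R^k)$ (the linear holonomy can have non-trivial kernel in general), but is guaranteed here by the finiteness of $H$ via Bochner's theorem.
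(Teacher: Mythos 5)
Your proposal is correct and follows essentially the same route as the paper: local Reeb stability gives the neighborhood $\bar{L}_p\times_H T$, the holonomy action is linearized to a faithful representation of $H$ in $\GL_k(\R)$, and the stratum is locally identified with the fixed-point set, a proper linear subspace of dimension at most $k-1$. The only difference is that you make explicit (via averaging and Bochner linearization) the faithfulness of the linearized action, which the paper simply asserts.
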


\begin{proof}
Fix $p\in M$ such that $L_p$ has non-trivial holonomy. Then by taking the germs at $0$ of elements in $\Hol(L_p,p)\subset \Diff_0(\R^{k})$, we have a faithful representation of $H=\Hol(L_p,p)$ into $\GL_{k}(\R)$. Let $\bar{L}\times_{\pi_1(L_p,p)} T$ be the tubular neighborhood of $L_p$ given by Theorem~\ref{T: Tubular Neighborhood of compact leaf}. Since we have a faithful representation of $\pi_1(L_p,p)$ into $\GL_{k}(\R)$, this neighborhood is determined by the linear action of $H$ on $\R^{k}$. 
We note that the set of fixed points $\mathrm{Fix}(H,\R^{k})$ is a non trivial linear subspace of dimension $q$.  Thus we have $0< q < k$. 
Moreover, the set of fixed points corresponds to leaves in the same holonomy stratum as $p^\ast$.   
Thus the connected component of the  holonomy stratum  containing $p^\ast$  has dimension $q \leqslant  k-1$.
\end{proof}

\begin{remark}\label{R: non-trivial holonomy stratum circle foliation has dimension at most n-2}
For a compact manifold $(M,\fol)$ with a regular foliation by circles, the previous proposition shows that  for any non-trivial holonomy, a connected component of its stratum has dimension at most $n-2$.
\end{remark}

\subsubsection{Triangulation}\label{SSS: triangulation}

Given an orbifold $X$, there exists a triangulation $\Tr$ of $X$, such that the closures of the strata $\Sigma_{(H)}$ are contained in subcomplexes of $\Tr$ (see \cite{Goresky1978,Yang1963}). By taking a subdivison of $\Tr$ we can assume that for a simplex $\sigma$ of $\Tr$,  the isotropy groups  of the points in the interior of $\sigma$  are the same, and are subgroups of the isotropy groups of the points in the boundary of $\sigma$. Furthermore we can assume that there is a face $\sigma'\subset \sigma$, such that the isotropy is constant on $\sigma\setminus \sigma'$, and possibly larger on $\sigma'$. Thus, any simplex $\sigma$ of $\Tr$ has a vertex $v\in \sigma$ with maximal isotropy. This means that for any point $x\in \sigma$, a conjugate of $G_x$ is contained in $G_v$.


	
\subsection{Hollowings}\label{S: Hollowings}

In this section, we introduce our main geometric tool, \emph{hollowings}. The right category to consider this construction is that of \emph{manifolds with corners}, so we first define these for the sake of completeness. We follow the presentations in \cite[Appendix B]{Fauserthesis} and \cite{Yano1982}, and refer the reader interested in more details on manifolds with corners and their submanifolds to \cite[Appendix B]{Fauserthesis}.

Let $n\in \mathbb{N}$ and let $k\in \{0,..., n\}$. A \emph{sector of dimension $n$
with index $k$} is a set of the form
\[
A =\{(x_1,...,x_n) \in \mathbb{R}^n \mid x_{i_1} \geq 0,..., x_{i_k}\geq 0\}
\]	
with pairwise distinct $i_1,..., i_k\in\{1,..., n\}$. Note that $\mathbb{R}^n$ is a sector
of dimension $n$ with index $0$. The \emph{index} of a point $x\in A$ is the number of its coordinates that are equal to $0$ among $x_{i_1}, ..., x_{i_k}$. The index of $0\in \mathbb{R}^n$ is $k$, the index of $A$.

Let $m\in \mathbb{N}$ be arbitrary and $A\subset  \mathbb{R}^n$ a sector. Let $U\subset A$ be open, and $V\subset \mathbb{R}^m$ an arbitrary subset. A map $\phi\colon U\rightarrow V$ is \emph{smooth} if for every $x\in U$, there exists an open neighborhood $U_x\subset \mathbb{R}^n$ of $x$ and a smooth  map $\phi_x\colon U_x\rightarrow V$ such that $\phi_x$ and $\phi$ coincide on $U\cap U_x$.

Let $M$ be a topological space.
An \emph{atlas with corners for $M$ of dimension $n$} is a family $(U_i,V_i, \phi_i)_{i\in I}$ where $(V_i)_{i\in I}$
is an open cover of $M$, each $U_i$ is an open subset in a sector $A_i$ of dimension $n$
and each $\phi_i\colon U_i\rightarrow V_i$ is a homeomorphism (called \emph{chart}) such that all transition maps
are smooth, \emph{i.e.} for all $i, j \in I$, the map
\[
(\phi_j^{-1}\circ\phi_i)|_{U_i\cap U_j}
\]
is smooth (in the above sense) and is a smooth diffeomorphism
onto its image.


We say that a Hausdorff second countable topological space $M$ is an \emph{$n$-dimensional manifold with corners}, if it admits an atlas with corners as above. For such an $M$, define the subsets $\partial M= <^{(1)}M\supset... \supset <^{(n)}M$ as the image under the chart homeomorphisms of all subsets of points of index at least $1\leq ...\leq n$. This does not depend on the choice of the charts.

In order to define submanifolds with corners, we need the notion of \emph{subsector}. Let $A=\{x\in \mathbb{R}^n \mid x_{i_1} \geq 0,..., x_{i_k}\geq 0\}$ be a sector of dimension $n$ with index $k$. 
Let $p, \ell, j \in \mathbb{N}$ with $p\geq \ell$. A \emph{subsector of $A$ (of codimension $p$, coindex
$\ell$ in $A$ and complementary index $j$)} is a subset $A'\subset A$ determined by 
sets $L\subset\{i_1,... , i_k\}$ with $\ell$ elements, $P\subset\{1,..., n\}\setminus\{i_1,... , i_k\}$ with $p-\ell$ elements and $J\subset \{1,..., n\}\setminus\left(\{i_1,... , i_k\}\cup P\right)$
with $j$ elements such that
\[
A' = \{(x_1, ...,x_n)\in A \mid x_i = 0\,\forall i\in L, x_i = 0\,\forall i\in P\,, x_i\geq 0\,\forall i\in J\}.
\]

Let $M$ be a manifold with corners. A
closed subspace $N\subset M$ is called \emph{submanifold (with corners)}, if for all $x\in N$ there
exists a chart $\phi\colon U \rightarrow V$ of $M$, where $U$ is contained in a sector $A$, and a subsector $A'\subset A$ such that
$\phi(0) = x$ and $\phi^{-1}(V \cap N) = U \cap A'$.

With these definitions, we are able to define \emph{hollowings}. Let $M$ be a manifold with corners and $N$ a submanifold of $M$ with corners, such that $N$ is transverse to each 
$(<^{(k)}M)\setminus (<^{(k+1)}M)$ and $<^{(k)}N=N\cap<^{(k)}~M$. The tubular neighborhood $\nu(N)$ of $N$ in $M$ has the structure of a disk bundle over $N$. Let $\psi\colon \nu_S(N)\times [0, 1]\rightarrow \nu(N)$ be the parametrization of $\nu(N)$ in polar coordinates, that is $\nu_S(N)$ is the total space of the associated sphere bundle, $\psi|_{\nu_S(N)\times\{1\}}=\mathrm{id}_{\nu_S(N)}$ and $\psi|_{\nu_S(N)\times\{0\}}$ is the projection of the bundle $\nu(N)\rightarrow N$. Take 
\[
M'=\mathrm{cl}\left(M\setminus \nu(N)\right)\bigcup_{\psi|_{\nu_S(N)\times\{1\}}} \nu_S(N)\times[0, 1].
\] 
There is a natural map $p\colon M'\rightarrow M$ defined by $p|_{M\setminus \nu(N)}=\mathrm{id}_{M\setminus \nu(N)}$ and $p|_{\nu_S(N)\times[0, 1]}=\psi$. The space $M'$ has a canonical structure as a manifold with corners, making $p$ a differentiable map between manifolds with corners.

The \emph{hollowing} of $M$ at $N$ is the  map $p\colon M'\rightarrow M$. For the case when $N=\emptyset$, we define $M'=M$ and $p=\mathrm{id}_M$. The submanifolds $N\subset M$ and $p^{-1}(N)\subset M'$ are called the \emph{trace} and the \emph{hollow wall} of $p$, respectively. For a submanifold $L\subset M$, denote by $\bar{p}(L)$ the submanifold $\mathrm{cl}(p^{-1}(L\setminus(L\cap N)))\subset M'$.

Let $(M,\fol)$ be a compact oriented $n$-dimensional manifold, with possibly empty boundary $\partial M$, equipped with a regular foliation  of dimension $q$, such that each leaf has finite holonomy. Consider a triangulation of the leaf space $M/\fol$ as described in Section \ref{SSS: triangulation}, and denote by $(M/\fol)^{(k)}$ the $k$-th skeleton of the triangulation. We define a sequence of hollowings
\[
\xymatrix
{
M_{n-q-1}\ar[r]_{p_{n-q-2}}&M_{n-q-2}\ar[r]&\cdots\ar[r]&M_1\ar[r]_-{p_0}&M_0=M
}
\]
in the following way.  For $k=0, ..., n-q-2$, we define inductively the map $p_k\colon M_{k+1}\rightarrow M_k$ to be the hollowing at 
\[
X_k = \bar{p}_{k-1}(\ldots \bar{p}_0(\pi^{-1}((M/\fol)^{(k)}))\ldots),
\]
for $k\geqslant 1$, and $X_0 = \pi^{-1}((M/\fol)^{(0)})$. The following proposition shows that at each step of the hollowing we obtain a foliated manifold.

\begin{prop}\label{P: The hollowing admits a foliation}
Let $(M,\fol)$ be a compact  smooth manifold with a regular foliation with finite holonomy. For each hollowing $p_i\colon M_{i+1} \to M_{i}$, there are  regular foliations  $\fol_i$ on $M_i$, and $\fol_{i+1}$ on $M_{i+1}$, such that $p_i$ maps $\fol_{i+1}$ to $\fol_i$.
\end{prop}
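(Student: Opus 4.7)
The strategy is an induction on $i$, starting from $\fol_0 := \fol$. The inductive hypothesis I will maintain is that $\fol_i$ is a smooth regular foliation of $M_i$ for which the hollowing locus $X_i$ is a union of leaves, i.e.\ $\fol_i$-saturated. To construct $\fol_{i+1}$, I keep $\fol_i$ away from the excised tubular neighborhood $\nu(X_i)$ and extend it across the inserted collar $\nu_S(X_i)\times[0,1]$ by means of the polar parametrization $\psi$.

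The essential local input is Local Reeb Stability (Theorem~\ref{T: Tubular Neighborhood of compact leaf}): for each leaf $L \subset X_i$, a foliated tubular neighborhood has the form $\bar L \times_H T$, and every nearby leaf is of the form $\bar L \times_H (H\cdot t)$ for some $t\in T$. Equipping $T$ with an $H$-invariant inner product, each such nearby leaf lies in a single sphere $\{|t|=r\}$; thus the foliation on $\nu(X_i)$ is \emph{horizontal} with respect to $\psi$, in the sense that leaves have constant ``radius''. Accordingly I define $\fol_{i+1}$ to coincide with $\fol_i$ on $M_i\setminus\nu(X_i)\hookrightarrow M_{i+1}$, and on the collar $\nu_S(X_i)\times[0,1]$ to have leaves $\ell\times\{r\}$, where $\ell$ ranges over the leaves of the foliation induced by $\fol_i$ on $\nu_S(X_i)$. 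Since $\psi|_{\nu_S(X_i)\times\{1\}}=\mathrm{id}$ and the product foliation is smooth down to $r=0$, the two prescriptions glue into a smooth regular foliation of $M_{i+1}$. By construction, $p_i$ sends each slice $\ell\times\{r\}$ to the nearby leaf of $\fol_i$ at radius $r$ and is the identity on leaves lying outside $\nu(X_i)$, so $p_i$ maps $\fol_{i+1}$ to $\fol_i$.

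Because $p_i$ is a foliated map and the original set $\pi^{-1}((M/\fol)^{(j)})\subset M_0$ is $\fol$-saturated by definition, the inductive definition propagates saturation: $X_{i+1}$ is the closure of a preimage, under foliated maps, of a saturated set, hence is $\fol_{i+1}$-saturated, closing the induction. The main technical difficulty lies in the coherent assembly of the local Reeb decompositions across the various holonomy strata meeting $X_i$; this is precisely where the refinement of the triangulation chosen in Section~\ref{SSS: triangulation} — so that each simplex lies in a single stratum and isotropy is constant on its interior — is needed, ensuring that $\nu(X_i)$ admits a globally consistent horizontal foliation structure, and hence that the slice description of $\fol_{i+1}$ on the collar is unambiguous.
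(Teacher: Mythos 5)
Your proof is correct and takes essentially the same approach as the paper's: both rest on Local Reeb stability (Theorem~\ref{T: Tubular Neighborhood of compact leaf}) to show that the tubular neighborhood of $X_i$ is saturated by leaves, so that the foliation survives the hollowing. In fact your argument is more detailed than the paper's, which stops at the saturation of the tubular neighborhood and leaves implicit both the explicit product description of $\fol_{i+1}$ on the collar $\nu_S(X_i)\times[0,1]$ and the compatibility of the polar parametrization $\psi$ with the leaves.
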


\begin{proof}

Consider $U$ a tubular neighborhood of $X_i$. Fix $q\in U$. Then $q$ belongs to a tubular neighborhood of some leaf $L_p$ with $p\in X_i$.  Denote by $H$ the holonomy group of $L_p$, and $T$ a transverse subspace to $T_p L_p$. Since a tubular neighborhood of $L_p$ is foliated diffeomorphic to $\bar{L}_p\times_{\Hol(L_p,p)} T$, the leaf $L_q$ is contained in this tubular neighborhood, and thus in the tubular neighborhood $U$ of $X_i$. This implies that  the tubular neighborhood of $X_i$ in $M_i$  is foliated. 
\end{proof}

\begin{remark}
In general, let $(M,\fol)$ be a compact  smooth manifold with a regular foliation with finite holonomy. 
Consider a fixed stratum $\Sigma_{(H)}$ of $M/\fol$. For $p^\ast\in \Sigma_{(H)}$ we have, from Theorem~\ref{T: Tubular Neighborhood of compact leaf}, that a neighborhood is given by $T/H$. Furthermore, we may assume that $T$ is homeomorphic to an $(n-q)$-disk, where $q$ is the dimension of $\fol$, and $H$ acts linearly on it. This implies that the fixed point set of the action of $H$ on $T$ is a linear subspace. Thus locally, the stratum is a submanifold in $T/H$. Since, by construction, the simplices of the triangulation are contained in strata,  from  the description of the hollowing, and the local description of the foliation, we see that the map $p^\ast_{i}\colon M_{i+1}/\fol_{i+1}\to M_{i}/\fol_{i}$ is a hollowing on the $i$-th skeleton.
\end{remark}

\begin{remark}
For the particular case that  the regular foliation $\fol$ is given by circles, then the foliation $\fol_i$ is also given by circles.
\end{remark}



\subsubsection{Examples}\label{SS: examples of hollowings}

We present some examples of hollowings.

\begin{example}\label{E: examples of hollowings}
\hfill
\begin{enumerate}
\item\label{SE: Example of hollowing of simplex} For the standard $n$-dimensional simplex $\Delta^n$, we denote by $(\Delta^n)^{(k)}$ the $k$-th skeleton of $\Delta^n$. We can define a sequence of hollowings inductively. Setting $\Delta^n_0 = \Delta^n$, we define $q_k\colon \Delta^n_{k+1}\to \Delta^n_k$ the hollowing at $\bar{q}_{k-1}\cdots \bar{q}_0((\Delta^n)^{(k)})$ (see Figure \ref{F: hollowed simplex}).
\begin{figure}[h]
\centering
\begin{subfigure}[c]{0.3\textwidth}
\centering
\includegraphics[scale=0.5]{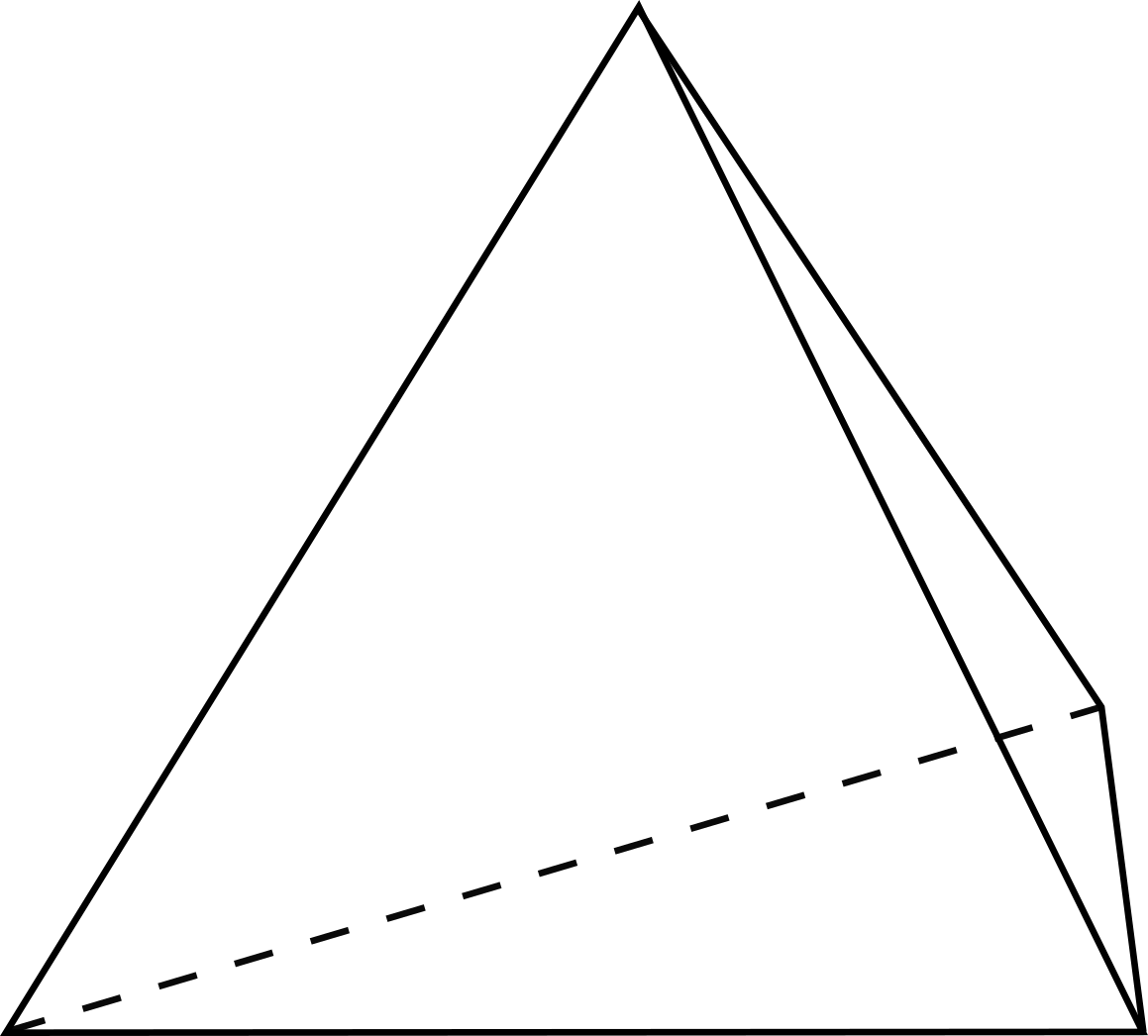}
\caption{The simplex $\Delta^3_0$}
\end{subfigure}
\begin{subfigure}[c]{0.3\textwidth}
\centering
\includegraphics[scale=0.5]{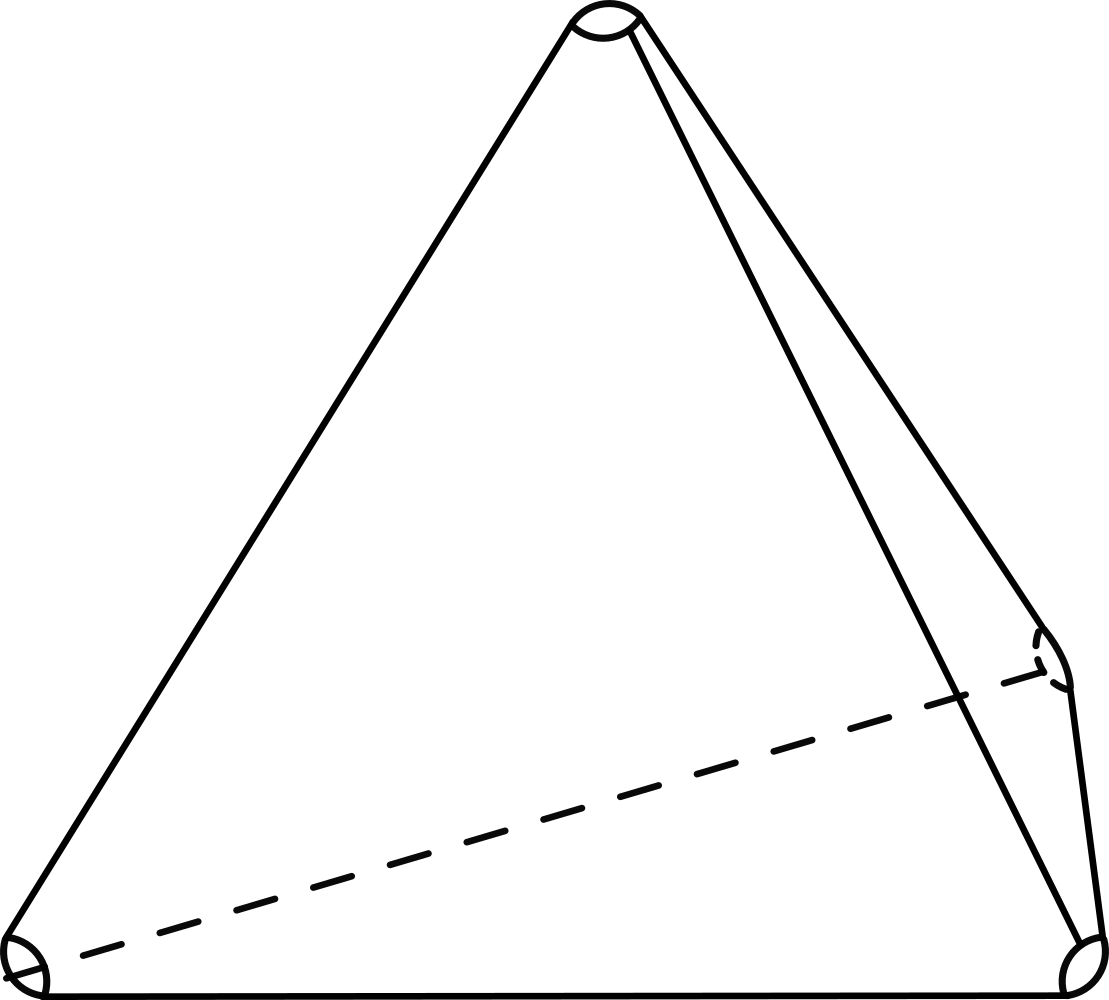}
\caption{The hollowed simplex $\Delta^3_1$}
\end{subfigure}
\begin{subfigure}[c]{0.3\textwidth}
\centering
\includegraphics[scale=0.5]{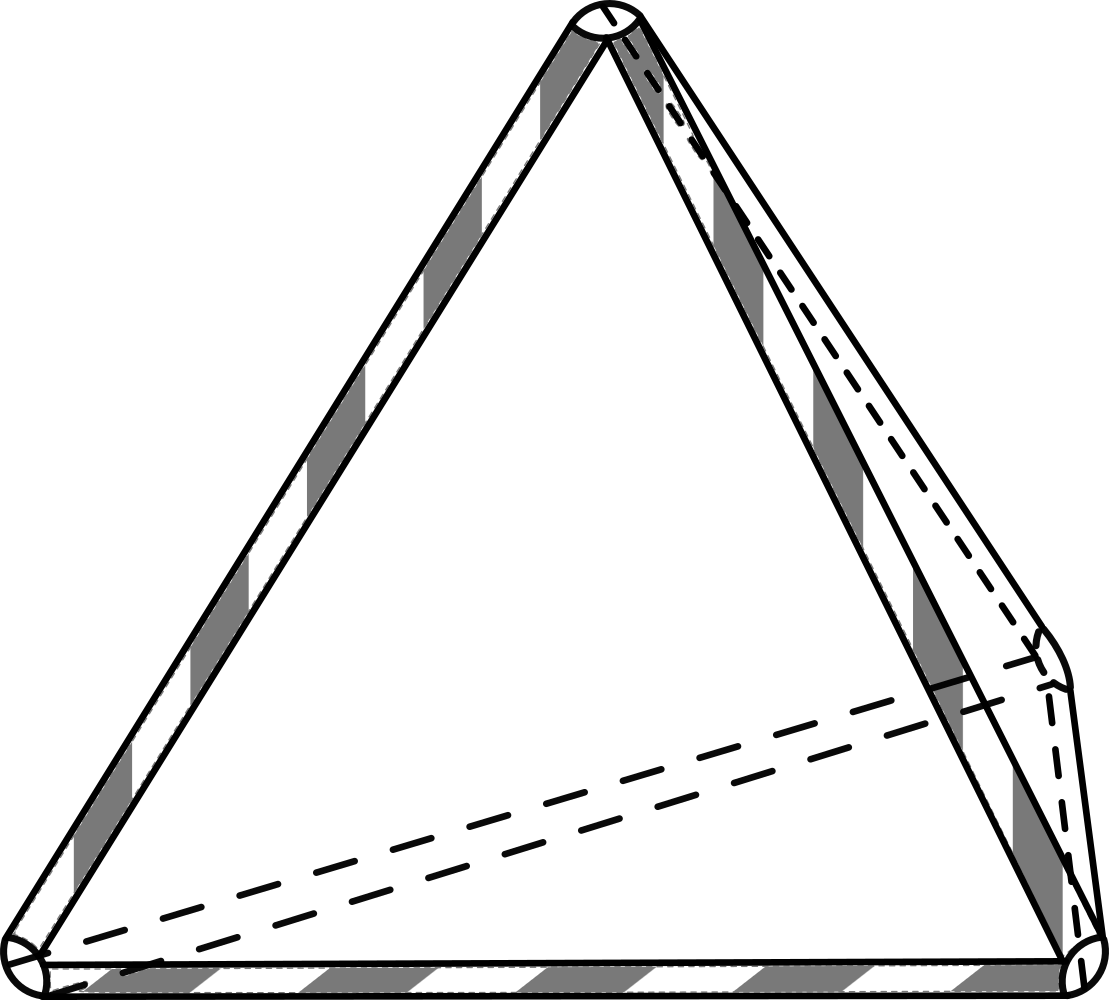}
\caption{ The hollowed simplex $\Delta^3_2$}
\end{subfigure}
\caption{Hollowings of the simplex}
\label{F: hollowed simplex}
\end{figure}
\item Let $\Sp^1$ act on $\Sp^2$ by rotations along the north-south axis. This gives a foliation $\fol$ of $\Sp^2$ by the orbit circles, except for the two fixed points $N, S$. The leaf space $(\Sp^2)^\ast$ is a segment with orbifold structure given by the open segment and its two boundary points corresponding to the fixed points $N, S$.

A hollowing $p$ at the submanifold $\{N, S\}$ is described as follows: in $M'$ the tubular neighborhoods $\nu(N), \nu(S)\subset \Sp^2$ of $N$ and $S$ (two $2$-disks) are replaced by the disk bundles over $N$ and $S$ and glued to $\Sp^2\setminus \left(\nu(N)\cup\nu(S)\right)$ by the identity map along the associated sphere bundles (see Figure \ref{S^2}). Thus in this case the hollowed manifold is again $\Sp^2$.

\begin{figure}[h]
\centering
\begin{subfigure}[b]{0.6\textwidth}
\centering
      \begin{tikzpicture}[scale=0.5]     
  \draw (0,0) circle (2cm);
\draw (-2.5,0.5) node[above]{$\Sp^2$} ;
 \draw (5.9,0) node[above]{$(\Sp^2)^*$} ;
  \draw (-2,0) arc (180:360:2 and 0.6);
  \draw[dashed] (2,0) arc (0:180:2 and 0.6);
  \draw (-1.8,0.87) arc (180:362:1.77 and 0.6);
  \draw[dashed] (1.8,0.87) arc (3:180:1.75 and 0.6);
  \draw (-1.8,-0.87) arc (182:363:1.78 and 0.6);
  \draw[dashed] (1.8,-0.87) arc (0:180:1.75 and 0.6);
  \fill[fill=black] (0,2) circle (3pt) node[above]{$N$} (0,2);
	\fill[fill=black] (0,-2) circle (3pt) node[below]{$S$} (0,2);
	\draw (4.7, -2)--(4.7, 2);
	\fill[fill=black] (4.7,2) circle (3pt) node[above]{$N^*$} (0,2);
	\fill[fill=black] (4.7,-2) circle (3pt) node[below]{$S^*$} (0,2);
	\draw[->] (2.2,0) .. controls +(0.8,0.3) .. (4.5,0);
\end{tikzpicture}
\caption{Foliation and leaf space}
\end{subfigure}	

\begin{subfigure}[b]{0.39\textwidth}
\centering
\begin{tikzpicture}[scale=0.5]
\draw (7,0) circle (2cm);
\draw (9.9,0.5) node[above]{$(\Sp^2)'$} ;
  \draw (5,0) arc (180:360:2 and 0.6);
  \draw[dashed] (9,0) arc (0:180:2 and 0.6);
  \draw (5.2,0.87) arc (180:362:1.77 and 0.6);
  \draw[dashed] (8.8,0.87) arc (3:180:1.75 and 0.6);
  \draw (5.2,-0.87) arc (182:363:1.78 and 0.6);
  \draw[dashed] (8.8,-0.87) arc (0:180:1.75 and 0.6);
  \draw (6.7, 1.97) arc (200:340:0.3);
  \draw[dashed] (7.3,-1.97) arc (20:160:0.3);
  \fill[fill=black] (7,2) circle (1pt) ;
	\fill[fill=black] (7,-2) circle (1pt) ;
\end{tikzpicture}
\caption{Hollowing}
\end{subfigure}
      \caption{Action of $\Sp^1$ by rotations on $\Sp^2$.} \label{S^2}
     \end{figure}
\item Let $\Sp^3=\{(z_0, z_1)\in \mathbb{C}^2\,\mid\,|z_0|^2+|z_1|^2=1\}$. Fix $p, q\in \mathbb{Z}$ with $(p, q)=1$ and let $\Sp^1$ act on $\Sp^3$ via
\[e^{2i\pi\theta}\cdot(z_0, z_1)=(e^{2i\pi\theta p}z_0,e^{2i\pi\theta q}z_1). \]
The action has no fixed points, but the circles  
\[
\{(z_0, 0) \mid |z_0|=1\},\, \{(0, z_1) \mid |z_1|=1\}\subset \Sp^3,
\]
have non-trivial isotropy isomorphic to $\mathbb{Z}_p$ and $\mathbb{Z}_q$, respectively. All other points have trivial isotropy. The orbit of each point is a circle. The orbit space is $\Sp^2$, with orbifold structure a cylinder with upper and lower boundary circles collapsed to points $x_p^\ast$ and $x_q^\ast$, respectively, corresponding to the two orbits with isotropy $\Z_p$ and $\Z_q$, respectively. Take a triangulation of this $\Sp^2$ such that $x_p^\ast, x_q^\ast$ are among the vertices: for example add four vertices $y_0^\ast, ..., y_3^\ast$ on the equator and join each of them by edges to $x_p^\ast$ and $x_q^\ast$ (see Figure~\ref{F: Hopf fibration}). In this fashion, we obtain a triangulation of $\Sp^2$ with $6$ vertices, $12$ edges and $8$ triangles.  The  hollowings consist here of a single map $p_0\colon M_1\rightarrow \Sp^3$, that is the hollowing at $\pi^{-1}(\Sp^3/\fol^{(0)})$: we take tubular neighborhoods of the $6$ circles $\pi^{-1}(x_p^\ast), \pi^{-1}(x_q^\ast), \pi^{-1}(y_0^\ast), ..., \pi^{-1}(y_3^\ast)$ out of $\Sp^3$ and glue back disk bundles over these circles using the identity map along the associated sphere bundles.
\begin{figure}[h]
\centering
\begin{subfigure}[c]{0.3\textwidth}
\centering
\includegraphics[scale=0.5]{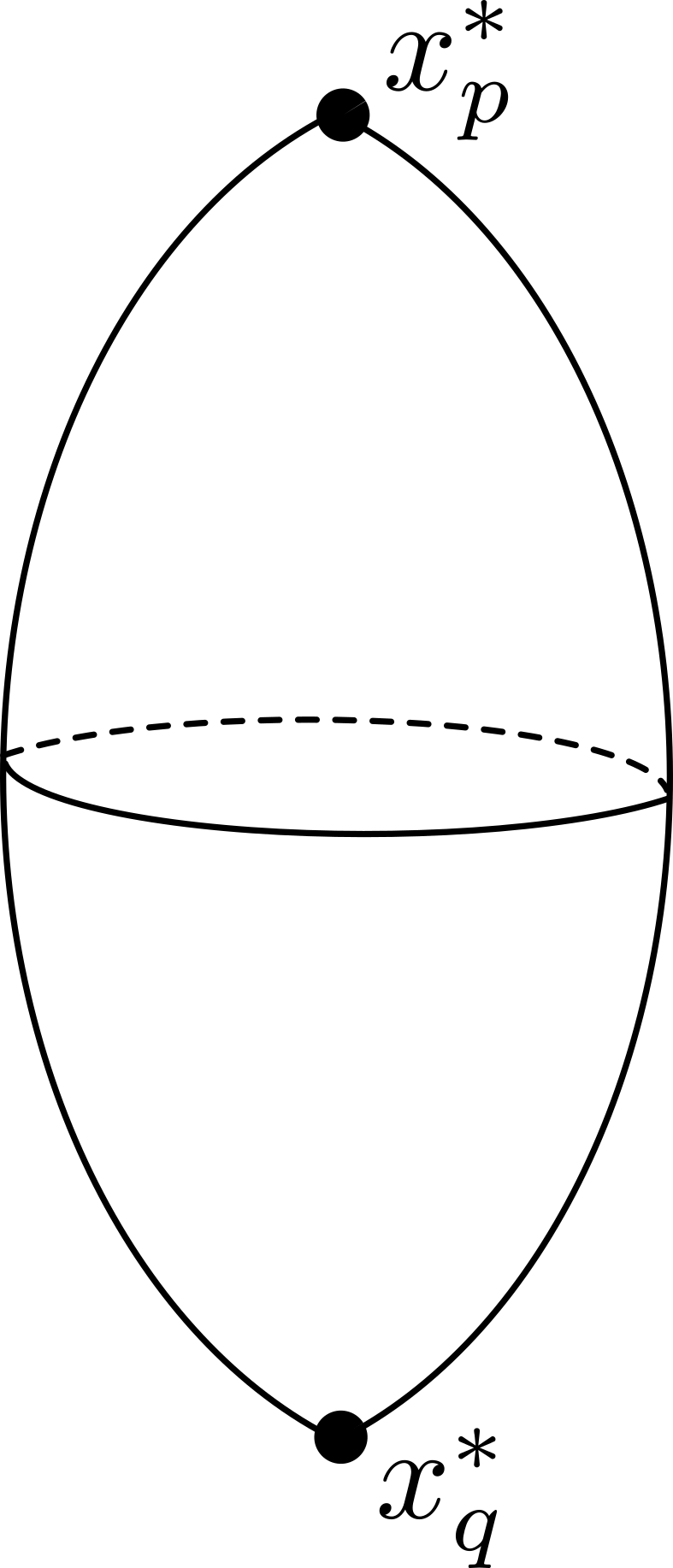}
\caption{Orbit space}
\end{subfigure}
\begin{subfigure}[c]{0.3\textwidth}
\centering
\includegraphics[scale=0.5]{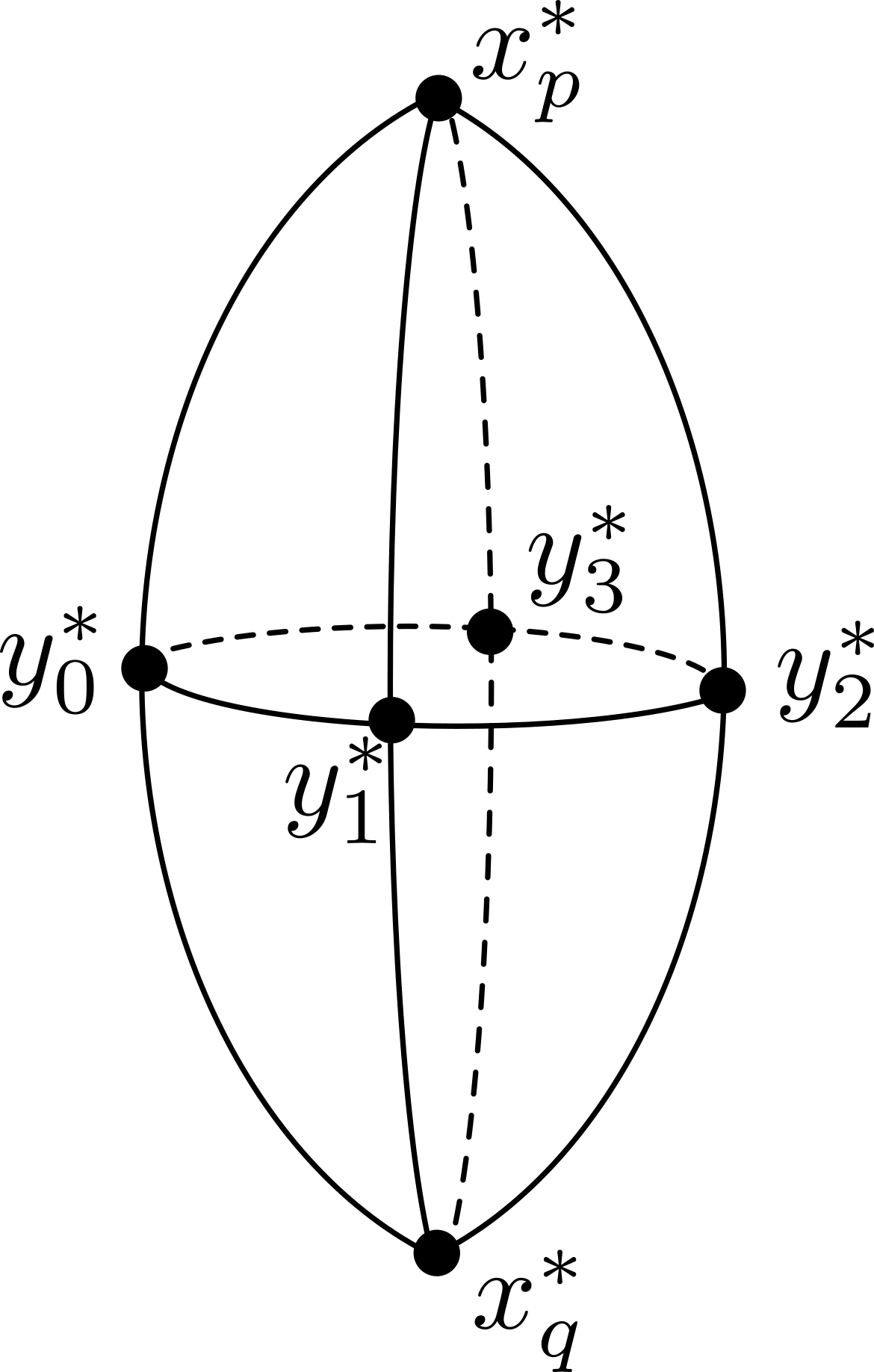}
\caption{Triangulated orbit space}
\end{subfigure}
\begin{subfigure}[c]{0.3\textwidth}
\centering
\includegraphics[scale=0.5]{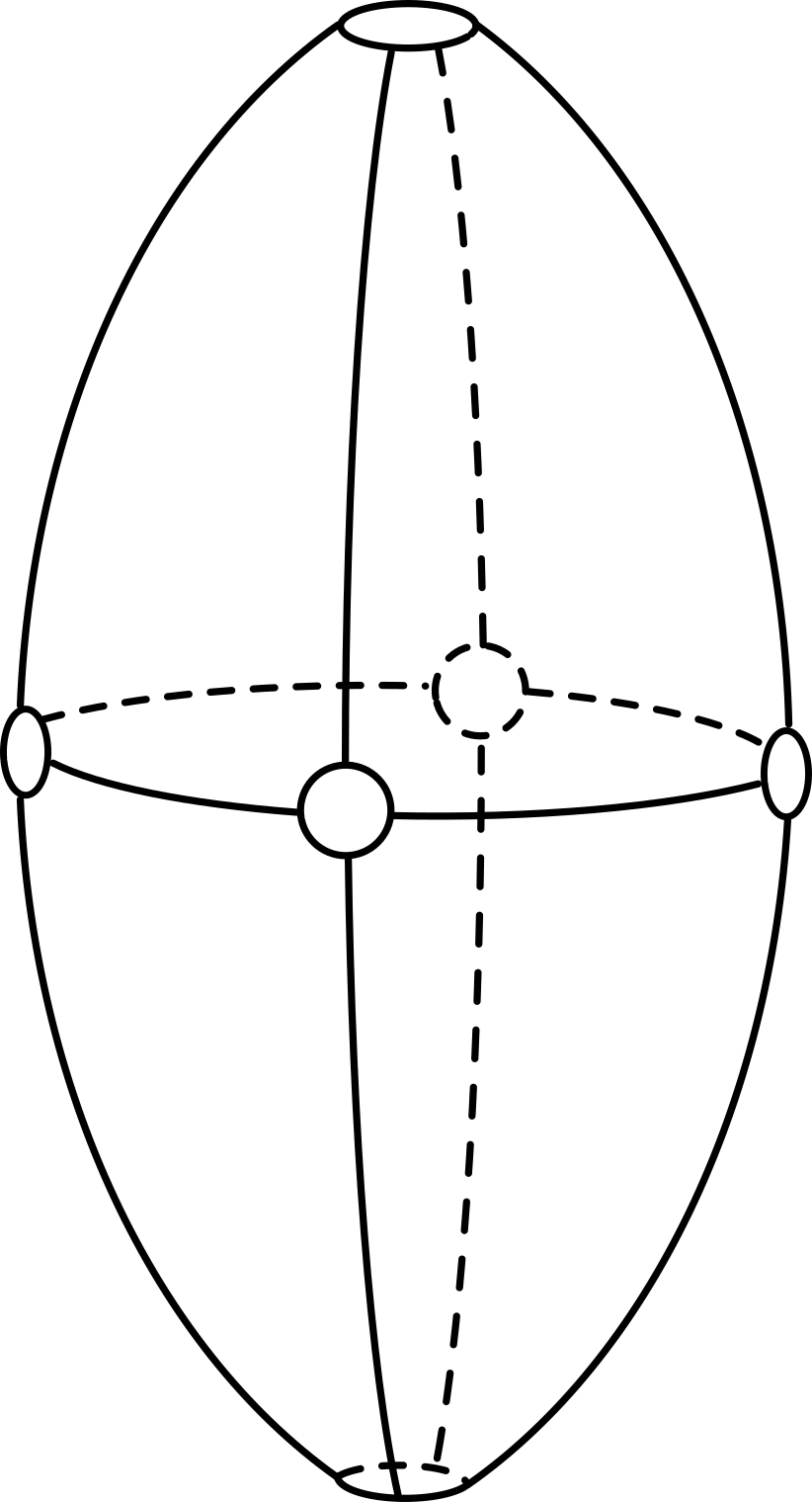}
\caption{Hollowing}
\end{subfigure}
\caption{Hollowing for the orbit space of the Hopf fibration}
\label{F: Hopf fibration}
\end{figure}
\end{enumerate}
\end{example}

\subsubsection{Decomposition of the manifold}\label{SS: decomposition of M trhough hollowings}

Let $(M,\fol)$ be   a regular foliation by circles with finite holonomy. In this case the leaf space has dimension $(n-1)$. Remark~\ref{R: non-trivial holonomy stratum circle foliation has dimension at most n-2} implies that the $(n-2)$-skeleton of the triangulation of  $M/\fol$ given in Subsection~\ref{SSS: triangulation} contains all the strata of non-trivial holonomy. As in Yano \cite{Yano1982}, this implies that the hollowing constructed in Section~\ref{S: Hollowings} is as follows:
\[
\xymatrix
{
M_{n-2}\ar[r]_{p_{n-3}}&M_{n-3}\ar[r]&...\ar[r]&M_1\ar[r]_-{p_0}&M_0=M.
}
\]
As in \cite{Fauser2019, Fauserthesis} we extend this sequence to account for the case when $M$ has non-empty boundary. We set $M_{-1} = M$, and $p_{-1} = \mathrm{Id}_{M}$.
We set $X_{-1} = \partial M \subset M_{-1}$, and define $N_j = p_{j}^{-1}(X_j)\subset M_{j+1}$. For $i<j$ we set: 
\[
p_{j,i} = p_i\circ p_{i-1}\circ \cdots \circ p_{j-1}\colon M_j\to M_i.
\]
Define $\widetilde{N}_j = p_{n-2,j+1}^{-1}(N_j)\subset M_{n-2}$, and for $\{j_1,\ldots,j_k\}$ a set of indices with $0\leq j_i\leq n-3$ we define $\widetilde{N}_{j_1,\ldots,j_k}=\widetilde{N}_{j_1}\cap\ldots\cap\widetilde{N}_{j_k}$ and ${X}_{j_1,\ldots, j_k} = p_{n-2,j_1}(\widetilde{N}_{j_1,\ldots, j_k})\subset X_{j_1}$.

We recover for the foliated context a series of lemmas from \cite{Yano1982} and \cite{Fauser2019}. For example we have the following:

\begin{lem}[Lemma~4 in \cite{Yano1982}]\label{L: X_j_1,...,j_k is contractible in orbit space}
Let $(M,\fol)$ be a regular foliation by circles with finite holonomy. For $j_1,\ldots,j_k \geq 0$, each connected component of $X_{j_1,\ldots,j_k}/\fol_{j_1}$ is contractible and can be identified with $\Delta^{j_1-k+1}_{j_1-k+1}$. 
\end{lem}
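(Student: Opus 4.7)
The plan is to argue by induction on $k$, tracking how the structure of a single simplex of the triangulation of $M/\fol$ evolves under the successive hollowings. Throughout, I would work locally over a fixed $j_1$-simplex $\sigma$ of the triangulation of $M/\fol$, since connected components of $X_{j_1,\ldots,j_k}/\fol_{j_1}$ correspond to such simplices (or specific faces of them, according to the lower indices). The key external input is the Remark following Proposition~\ref{P: The hollowing admits a foliation}, which tells us that on leaf spaces the maps $p^*_i\colon M_{i+1}/\fol_{i+1}\to M_i/\fol_i$ are themselves hollowings on the $i$-skeleton; this reduces the problem to a purely combinatorial statement about the iterated hollowing of a simplex.

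For the base case $k=1$, the subset $X_{j_1}\subset M_{j_1}$ is, by construction, the trace in $M_{j_1}$ of the $j_1$-skeleton of $M/\fol$ after the hollowings at the $0$-, $1$-, \ldots, $(j_1-1)$-skeleta have been performed. Quotienting by $\fol_{j_1}$ and restricting to the component lying over the interior of a $j_1$-simplex $\sigma$ of the triangulation, one obtains exactly the simplex $\sigma$ with all its proper faces hollowed out; this is the construction of Example~\ref{E: examples of hollowings}\,\ref{SE: Example of hollowing of simplex}, i.e.\ $\Delta^{j_1}_{j_1}$. Contractibility then follows because each hollowing deformation-retracts onto the original simplex, so $\Delta^{j_1}_{j_1}$ retracts onto the barycenter of $\sigma$.

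For the inductive step, suppose the claim holds for $k$. A component $C$ of $X_{j_1,\ldots,j_{k+1}}/\fol_{j_1}$ is contained in a component $C'$ of $X_{j_1,\ldots,j_k}/\fol_{j_1}$, which by induction is identified with $\Delta^{j_1-k+1}_{j_1-k+1}$. The extra condition of lying in $\widetilde{N}_{j_{k+1}}=p_{n-2,j_{k+1}+1}^{-1}(N_{j_{k+1}})$ translates, via the Remark cited above, to lying over a specific $j_{k+1}$-face of the original simplex $\sigma$. In the hollowed simplex $\Delta^{j_1-k+1}_{j_1-k+1}$, such a face contributes precisely one of the codimension-one boundary components introduced by the hollowing process, and this component has itself the structure of a hollowed simplex of one dimension lower. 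Thus $C$ is identified with $\Delta^{j_1-k}_{j_1-k} = \Delta^{j_1-(k+1)+1}_{j_1-(k+1)+1}$, and contractibility is preserved.

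The main obstacle I anticipate is the bookkeeping needed to verify that $\widetilde{N}_{j_1}\cap\cdots\cap\widetilde{N}_{j_k}$ projects exactly onto the face structure I claim, rather than onto a union of several faces or a larger stratum. One must check that each $\widetilde{N}_j$ projects under $p_{n-2,j+1}$ precisely to the hollow wall introduced at step $j$, and that distinct hollow walls from different steps intersect transversally in the total space in exactly the way the combinatorics of successively hollowing a simplex predicts. Once this is verified, passing between $M_{n-2}$ and the leaf space via Proposition~\ref{P: The hollowing admits a foliation} is routine and the inductive identification with $\Delta^{j_1-k+1}_{j_1-k+1}$ follows.
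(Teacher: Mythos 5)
Your argument is correct and follows essentially the same route as the paper: identify $X_{j_1}/\fol_{j_1}$ componentwise with the iterated hollowing $\Delta^{j_1}_{j_1}$ of Example~\ref{E: examples of hollowings}\,\ref{SE: Example of hollowing of simplex}, then observe that each further index cuts down to a face of the hollowed simplex, yielding $\Delta^{j_1-k+1}_{j_1-k+1}$. Your explicit induction on $k$ (and your flagged bookkeeping about how the hollow walls intersect) merely unpacks the step the paper asserts in one sentence, so the two proofs coincide in substance.
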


\begin{proof}
We show first that this holds for $X_{j_1}$.  The set $X_{j_1}$ is the pullback via $p_{j_1-1,0}\circ \pi \colon M_{j_1}\to M/\fol$ of the $j_1$-skeleton. When $X_{j_1}$ is connected, then from the construction, we have that $X_{j_1}/\fol_{j_1}$ is equal to $\Delta^{\ell}_{\ell}$, where $\ell=j_1$ (see Example~\ref{E: examples of hollowings}~\ref{SE: Example of hollowing of simplex}). Under this identification, each connected component of $X_{j_1,\ldots, j_k} / \fol_{j_1}$ is diffeomorphic to $\Delta^{\ell-k+1}_{\ell-k+1}$. This space is contractible.
\end{proof}

From the construction of the triangulation (Section~\ref{SSS: triangulation}) we obtain: 

\begin{lem}[Lemma~2.1 in \cite{Fauser2019}]\label{L: fibration over X_j_1,...,j_k}
Let $(M,\fol)$ be a regular foliation by circles with finite holonomy. For $j_1,\ldots,j_k \geq 0$, the space $X_{j_1,\ldots,j_k}$ is foliated diffeomorphic to $(X_{j_1,\ldots,j_k}/\fol_{j_1})\times \Sp^1$.
\end{lem}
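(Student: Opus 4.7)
The plan is to realise $X_{j_1,\ldots,j_k}$, with the restriction of $\fol_{j_1}$, as a locally trivial $\Sp^1$-bundle over its leaf space, and then to trivialise this bundle using the contractibility furnished by Lemma~\ref{L: X_j_1,...,j_k is contractible in orbit space}.

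First I would establish that $\fol_{j_1}|_{X_{j_1,\ldots,j_k}}$ is a locally trivial $\Sp^1$-bundle. By Lemma~\ref{L: X_j_1,...,j_k is contractible in orbit space}, each connected component of $X_{j_1,\ldots,j_k}/\fol_{j_1}$ is diffeomorphic to the hollowed simplex $\Delta^{j_1-k+1}_{j_1-k+1}$, which is a smooth manifold (with corners) rather than a genuine orbifold. By the orbifold description of the leaf space (Theorem~\ref{T: Orbifold structure of the leaf space}) applied to this smooth leaf space, every leaf meeting $X_{j_1,\ldots,j_k}$ must have trivial holonomy, and by Local Reeb Stability (Theorem~\ref{T: Tubular Neighborhood of compact leaf}) the restriction of $\fol_{j_1}$ is therefore a locally trivial circle bundle. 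This is consistent with Proposition~\ref{P: The hollowing admits a foliation}, as the hollowing construction is precisely designed to excise the non-trivial holonomy strata step by step: around a leaf $L_p$ with non-trivial holonomy $H$, the model $\bar{L}_p\times_H T$ is replaced by the collar of its associated sphere bundle, along whose walls the linearised action of $H$ on the transversal becomes free.

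Next, since each connected component of the base $X_{j_1,\ldots,j_k}/\fol_{j_1}$ is contractible, the $\Sp^1$-bundle over it is trivial; any trivialisation is automatically a foliated diffeomorphism, because it maps circle fibers to the slices $\{\ast\}\times \Sp^1$. Assembling the trivialisations over the connected components produces the desired foliated diffeomorphism $X_{j_1,\ldots,j_k}\cong (X_{j_1,\ldots,j_k}/\fol_{j_1})\times \Sp^1$.

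The main obstacle will be the first step: rigorously justifying that the iterated hollowing procedure has indeed removed all non-trivial holonomy over $X_{j_1,\ldots,j_k}$. This requires an inductive bookkeeping along the maps $p_i$ of how the hollow walls $N_{j_i}$ are created, using Proposition~\ref{P: The hollowing admits a foliation} to transport the foliations and the explicit $\bar{L}\times_H T$ model to identify each new wall with a sphere bundle over the previous trace. Once this has been granted, the triviality of a circle bundle over a contractible base is standard and completes the argument.
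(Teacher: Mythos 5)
Your second and third steps (triviality of a circle bundle over a contractible base, assembling over components) match the paper, but your first step contains a genuine error. You claim that every leaf meeting $X_{j_1,\ldots,j_k}$ has trivial holonomy, deducing this from the fact that the leaf space component is a hollowed simplex (hence a manifold with corners) and from the idea that the hollowing ``excises the non-trivial holonomy strata step by step.'' Neither inference is valid. First, a quotient $T/H$ can be a manifold with corners for non-trivial finite $H$ (e.g.\ $\R^2/(\Z/k\Z)$ acting by rotations is homeomorphic to $\R^2$), so the underlying space of the leaf space being smooth does not force trivial holonomy. Second, and more importantly, the sets $X_{j}$ for $j\leq n-3$ are precisely where the non-trivial holonomy lives: by Remark~\ref{R: non-trivial holonomy stratum circle foliation has dimension at most n-2} the non-trivial holonomy strata lie in the $(n-3)$-skeleton of $M/\fol$, and it is only the final space $M_{n-2}$ (Lemma~\ref{L: foliation over M_n-2}) from which all of them have been removed. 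A concrete counterexample to your claim is the Seifert example in Section~\ref{SS: examples of hollowings}: there $X_0=\pi^{-1}((\Sp^3/\fol)^{(0)})$ contains the exceptional fibers $\pi^{-1}(x_p^\ast)$ and $\pi^{-1}(x_q^\ast)$, whose holonomy groups are $\Z/p\Z$ and $\Z/q\Z$. The lemma still holds for them ($X_0$ is six disjoint circles, i.e.\ six copies of $\{\mathrm{pt}\}\times\Sp^1$), but not for the reason you give.

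The paper's argument avoids this by using \emph{constancy}, not triviality, of the holonomy: by the construction of the triangulation (Section~\ref{SSS: triangulation}) the conjugacy class of the holonomy group is constant on the interior of each simplex, hence (via Lemma~\ref{L: X_j_1,...,j_k is contractible in orbit space}) constant on each connected component of $X_{j_1,\ldots,j_k}$. Over a connected stratum of constant holonomy type the projection to the leaf space is a locally trivial bundle whose fiber is the leaf itself, which for a circle foliation is again $\Sp^1$ (the quotient of a circle by a finite cyclic holonomy group is a circle). From there your contractibility argument applies verbatim. So you should replace your first step by this constancy argument; as written, the step would fail whenever $X_{j_1,\ldots,j_k}$ meets an exceptional stratum.
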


\begin{proof}
From the construction of the triangulation, over each face of an $(n-1)$-simplex we have leaves of $\fol$ with same holonomy. By construction of $X_{j_1,\ldots,j_k}$, this fact also holds for $\fol_{j_1}$. From Lemma~\ref{L: X_j_1,...,j_k is contractible in orbit space} it follows that over a connected component of $X_{j_1,\ldots,j_k}$ we have constant holonomy. Thus $X_{j_1,\ldots,j_k}$ is the union of total spaces of circle fibrations over the connected components of $X_{j_1,\ldots,j_k} / \fol_{j_1}$. Therefore it is trivial.
\end{proof}


A main difference with the work of \cite{Fauser2019, Yano1982} is that the circle bundle $\Sp^1\to M_{n-2} \to (M_{n-2}/\fol_{n-2})$ is not trivial in general. Nonetheless, we have the following lemma:

\begin{lem}\label{L: foliation over M_n-2}
Let $(M,\fol)$ be a regular foliation by circles with finite holonomy on a connected manifold. For the last hollowing $M_{n-2}$, the leaf space $(M_{n-2}/\fol_{n-2})$ has the homotopy type of a compact connected $1$-complex and $M_{n-2}$ is aspherical.
\end{lem}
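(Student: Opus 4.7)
My plan is to prove the two assertions in order, establishing the homotopy type of the leaf space first and then deducing asphericity of $M_{n-2}$ from this.

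\emph{Homotopy type.} By Theorem~\ref{T: Orbifold structure of the leaf space}, $M/\fol$ is a compact $(n-1)$-dimensional orbifold, triangulated as in Subsection~\ref{SSS: triangulation}. By the remark following Proposition~\ref{P: The hollowing admits a foliation}, each hollowing descends to a hollowing of the leaf spaces, so $M_{n-2}/\fol_{n-2}$ is obtained from $M/\fol$ by iteratively hollowing at the $0$-, $1$-, \dots, $(n-3)$-skeleton of the triangulation -- every such skeleton having codimension at least $2$ in $M/\fol$. I would then show that the resulting space deformation retracts onto the dual $1$-complex of the triangulation, with one vertex per top $(n-1)$-simplex and one edge per $(n-2)$-face connecting the vertices corresponding to the incident top simplices. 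The key local model is the iteratively hollowed simplex $\Delta^{n-1}_{n-2}$ from Example~\ref{E: examples of hollowings}(\ref{SE: Example of hollowing of simplex}), which radially deformation retracts onto its dual graph, and these local retractions glue compatibly across the hollow walls (which are identified via the identity on the associated sphere bundles) to a global retraction. Connectedness is inherited from $M$ since hollowing at codimension-$\geq 2$ subsets preserves connectedness, so the resulting $1$-complex is compact and connected.

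\emph{Asphericity.} Since $\fol_{n-2}$ is still a regular foliation by circles with finite holonomy (remark after Proposition~\ref{P: The hollowing admits a foliation}), the projection $\pi_{n-2}\colon M_{n-2}\to M_{n-2}/\fol_{n-2}$ is a Seifert $\Sp^1$-fibration over an orbifold $B$ whose underlying space is homotopy equivalent to a compact connected $1$-complex. The orbifold universal cover of $B$ fibres by the same retraction over the universal cover of this $1$-complex (a tree), so the orbifold universal cover of $B$ is contractible and $B$ is orbifold-aspherical. Analysing the orbifold homotopy sequence
\[
\cdots\longrightarrow \pi_k(\Sp^1)\longrightarrow \pi_k(M_{n-2})\longrightarrow \pi_k^{\mathrm{orb}}(B)\longrightarrow \pi_{k-1}(\Sp^1)\longrightarrow\cdots,
\]
one has that, for $k\geq 2$, both outer terms vanish (since $\Sp^1$ is aspherical) and $\pi_k^{\mathrm{orb}}(B)=0$, whence $\pi_k(M_{n-2})=0$ and $M_{n-2}$ is aspherical.

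The principal obstacle is the validity of the orbifold homotopy sequence where $\pi_{n-2}$ has non-trivial finite isotropy, i.e.\ on the $(n-2)$-dimensional holonomy strata. I would circumvent this by applying Local Reeb Stability (Theorem~\ref{T: Tubular Neighborhood of compact leaf}) to write each such neighborhood locally as $\bar L\times_H T\to T/H$, passing to the finite cover $\bar L\times T\to T$ on which the projection becomes a trivial $\Sp^1$-bundle, and patching these to produce a finite orbifold cover of $B$ by an honest manifold $\hat B$ still homotopy equivalent to a $1$-complex; the pullback of $M_{n-2}$ is then a genuine $\Sp^1$-bundle over $\hat B$, which is aspherical by the Serre long exact sequence, and asphericity descends to $M_{n-2}$ via the finite covering.
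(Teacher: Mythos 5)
Your argument for the homotopy type of the leaf space is fine and is essentially the paper's: $M_{n-2}/\fol_{n-2}$ is $M/\fol$ with (a neighbourhood of) the $(n-3)$-skeleton removed, which deformation retracts onto the dual $1$-complex; connectedness and compactness are as you say.

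The asphericity argument, however, contains a genuine gap, and it stems from a missed observation that makes your entire orbifold detour unnecessary. By Proposition~\ref{P: dimension of holonomy stratum} and Remark~\ref{R: non-trivial holonomy stratum circle foliation has dimension at most n-2}, the union $\Sigma\subset M$ of all leaves with non-trivial holonomy has dimension at most $n-2$, so its image in the $(n-1)$-dimensional leaf space has dimension at most $n-3$ and is therefore contained in the $(n-3)$-skeleton of the triangulation of $M/\fol$ (the triangulation is chosen so that closures of strata are subcomplexes). The hollowings remove the preimages of precisely the skeleta of dimension $0,\ldots,n-3$, so \emph{no} leaf of $\fol_{n-2}$ has non-trivial holonomy: $M_{n-2}\to M_{n-2}/\fol_{n-2}$ is an honest $\Sp^1$-bundle over a genuine manifold (with corners) homotopy equivalent to a graph, and the ordinary long exact homotopy sequence of this fibration gives asphericity directly, since free groups and $\Z$ have aspherical classifying spaces. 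There is no residual isotropy to circumvent: the ``$(n-2)$-dimensional holonomy strata'' you worry about do not survive the hollowing (and codimension-one reflection strata in the leaf space are excluded anyway by orientability of $M$).

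As written, your workaround is not merely superfluous but unjustified: you propose to patch the local finite covers $\bar L\times T\to T$ coming from Theorem~\ref{T: Tubular Neighborhood of compact leaf} into a global finite \emph{manifold} cover $\hat B$ of the base orbifold $B$. Local manifold charts of an orbifold do not in general assemble into a global manifold cover --- this is exactly the distinction between good and bad orbifolds, and bad orbifolds (teardrops, spindles) do occur as base orbifolds of Seifert circle fibrations of manifolds, e.g.\ the weighted Hopf fibrations $\Sp^3\to\Sp^2(p,q)$ with $p\neq q$. So if non-trivial isotropy were actually present, this step could fail; you would need a separate argument (e.g.\ that $B$ is developable). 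Fortunately, in the situation of the lemma the base has no isotropy at all, and the paper's short direct argument is the one to give.
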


\begin{proof}
Let $\Sigma$ be the union of all strata with non-trivial holonomy. By Proposition~\ref{P: dimension of holonomy stratum}, $\Sigma$ has dimension at most $n-2$. Thus $\Sigma /\fol$ has dimension $n-3$, and is contained in simplices of dimension at most $n-3$ in $M/\fol$. From this it follows that $\bar{p}_{n-2,0}(\Sigma) = \emptyset$, since at this point we have removed all the preimages of the simplices of dimension less than $n-2$. Thus over $M_{n-2}$ the foliation does not have holonomy, and induces a circle bundle. Observe that $M_{n-2}/\fol_{n-2} \cong M/\fol \setminus (M/\fol)^{(n-3)}$ has the homotopy type of a compact $1$-complex, that is a finite graph. Since $M$ is connected, this graph is connected. The fundamental group of a finite graph is a finitely generated free group. From the long exact sequence of homotopy groups of the fibration $\Sp^1\to M_{n-2}\to M_{n-2}/\fol_{n-2}$ we see that $M_{n-2}$ is aspherical.
\end{proof}



Furthermore, by the construction of the hollowings and the triangulation, the following proposition holds:

\begin{prop}[See Proposition~2.2 in \cite{Fauser2019}]\label{P: Connected components of X_j_1,...,j_k,-1}
For all pairwise distinct $j_1,\ldots,j_k\in \{0,\ldots,n-q-2\}$ we have that $X_{j_1,\ldots,j_k,-1}$ is the union of the connected components $Y\subset X_{j_1,\ldots,j_k}$ that satisfy
\[
	Y\subset p_{n-2,j_1}(\widetilde{N}_{-1}).
\] 
\end{prop}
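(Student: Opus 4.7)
The plan is to reduce the statement to a set-theoretic identity involving the hollowing maps, and then to use Lemmas~\ref{L: X_j_1,...,j_k is contractible in orbit space} and~\ref{L: fibration over X_j_1,...,j_k} to identify connected components of $X_{j_1,\ldots,j_k}$. The key observation is that, since each hollowing $p_k$ is surjective and since $p_{n-2,0}=p_{j_1,0}\circ p_{n-2,j_1}$, we obtain
\[
p_{n-2,j_1}(\widetilde{N}_{-1})=p_{j_1,0}^{-1}(\partial M)
\quad\text{and}\quad
p_{n-2,j_1}^{-1}\bigl(p_{n-2,j_1}(\widetilde{N}_{-1})\bigr)=\widetilde{N}_{-1}.
\]
I would assume throughout that the triangulation of $M/\fol$ from Section~\ref{SSS: triangulation} has been refined, if necessary, so that $\partial M/\fol$ is a subcomplex; this is standard and preserves compatibility with the holonomy stratification.

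To prove the non-trivial inclusion, fix a connected component $Y$ of $X_{j_1,\ldots,j_k}$ with $Y\subseteq p_{n-2,j_1}(\widetilde{N}_{-1})$ and set $\widetilde{Y}=p_{n-2,j_1}^{-1}(Y)\cap\widetilde{N}_{j_1,\ldots,j_k}$. Since $Y\subseteq X_{j_1,\ldots,j_k}=p_{n-2,j_1}(\widetilde{N}_{j_1,\ldots,j_k})$, every point of $Y$ has a preimage in $\widetilde{N}_{j_1,\ldots,j_k}$, so $p_{n-2,j_1}(\widetilde{Y})=Y$. Combining the two identities above then gives $\widetilde{Y}\subseteq p_{n-2,j_1}^{-1}(p_{n-2,j_1}(\widetilde{N}_{-1}))\cap\widetilde{N}_{j_1,\ldots,j_k}=\widetilde{N}_{-1}\cap\widetilde{N}_{j_1,\ldots,j_k}$, and applying $p_{n-2,j_1}$ yields $Y\subseteq X_{j_1,\ldots,j_k,-1}$, as desired.

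For the reverse inclusion I would exploit the dichotomy provided by the triangulation. By Lemmas~\ref{L: X_j_1,...,j_k is contractible in orbit space} and~\ref{L: fibration over X_j_1,...,j_k}, each connected component of $X_{j_1,\ldots,j_k}$ is a trivial circle bundle over a hollowed simplex $\Delta^{j_1-k+1}_{j_1-k+1}$ associated to a unique top simplex $\sigma$ of dimension $j_1$ in the triangulation of $M/\fol$. By the boundary compatibility of the triangulation, $\sigma$ either lies entirely in $\partial M/\fol$ or has interior disjoint from it, so the corresponding component is either entirely contained in $p_{j_1,0}^{-1}(\partial M)=p_{n-2,j_1}(\widetilde{N}_{-1})$ or entirely disjoint from it. Hence $X_{j_1,\ldots,j_k,-1}\subseteq p_{n-2,j_1}(\widetilde{N}_{-1})$ decomposes as a union of whole connected components of $X_{j_1,\ldots,j_k}$, each contained in $p_{n-2,j_1}(\widetilde{N}_{-1})$, which combined with the previous paragraph closes the proof.

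The main subtlety I anticipate is ensuring that the triangulation is chosen compatibly with $\partial M/\fol$; this is a routine refinement step but is easy to overlook because the preliminaries only explicitly require compatibility with the holonomy stratification. The remainder is essentially bookkeeping that leverages the factorization of the hollowing maps and the product structure from Lemma~\ref{L: fibration over X_j_1,...,j_k}.
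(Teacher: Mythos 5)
Your overall strategy mirrors the paper's (one inclusion by set-theoretic bookkeeping with the hollowing maps, the other by a dichotomy on connected components), but the reverse inclusion contains a genuine gap, and the gap interacts badly with the identity you use for the forward one. The problem is the step ``$\sigma$ either lies entirely in $\partial M/\fol$ or has interior disjoint from it, \emph{so} the corresponding component is either entirely contained in $p_{j_1,0}^{-1}(\partial M)$ or entirely disjoint from it.'' A connected component $Y$ of $X_{j_1,\ldots,j_k}$ is a \emph{compact} hollowed simplex times $\Sp^1$: it is the closure of the part lying over $\mathrm{int}(\sigma)$, and therefore contains hollow-wall points mapping onto $\pi^{-1}(\partial\sigma)$. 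If $\sigma\not\subset\partial (M/\fol)$ but some face of $\sigma$ lies in $\partial (M/\fol)$ (which happens for every simplex adjacent to the boundary whenever $\partial M\neq\emptyset$), those hollow-wall points do land in the full preimage $p_{j_1,0}^{-1}(\partial M)$, while the points over $\mathrm{int}(\sigma)$ do not; so $Y$ meets $p_{j_1,0}^{-1}(\partial M)$ without being contained in it, and your dichotomy fails. (Concretely: take $M=\Sp^1\times[0,1]^3$ with the product foliation and an edge $e$ of the triangulation of $[0,1]^3$ having exactly one endpoint on $\partial[0,1]^3$.)

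The underlying issue is which set $p_{n-2,j_1}(\widetilde N_{-1})$ actually is. For the dichotomy --- and hence for the proposition --- to hold, it must be the iterated proper transform $\bar p_{j_1-1}(\cdots\bar p_0(\partial M)\cdots)$, i.e.\ the boundary face of the manifold with corners $M_{j_1}$ lying over $\partial M$, and not the full preimage $p_{j_1,0}^{-1}(\partial M)$: a hollow-wall point of $Y$ whose normal direction points out of $\partial M$ is not a limit of points of $\partial M$ and so does not belong to the proper transform, which is exactly what rescues the dichotomy. But under that reading your saturation identity $p_{n-2,j_1}^{-1}\bigl(p_{n-2,j_1}(\widetilde N_{-1})\bigr)=\widetilde N_{-1}$, which drives your forward inclusion, is no longer available, since a proper transform is not a full preimage. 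The paper's proof avoids saturation altogether and instead uses the disjointness
\[
p_{n-2,j}(\widetilde N_j\setminus \widetilde N_{-1})\cap p_{n-2,j}(\widetilde N_{-1}\setminus \widetilde N_j)=\emptyset,
\]
which holds by construction of the hollowings and yields $p_{n-2,j}(\widetilde N_j)\cap p_{n-2,j}(\widetilde N_{-1})\subset p_{n-2,j}(\widetilde N_{j,-1})$ directly. You should rework both halves of your argument consistently with the proper-transform reading, replacing the saturation identity by this disjointness statement; your observation that the triangulation must make $\partial(M/\fol)$ a subcomplex is correct and worth keeping.
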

\begin{proof}
Let $j\in \{0,\ldots, n-3\}$. We first show the statement for $X_{j,-1}\subset X_j$. We work in the space $X_j/\fol_j$ and show the statement there.

Let $Y\subset X_j/\fol_j$ be a connected component. We will show that \[
Y\subset X_{j, -1}/\fol_j \Longleftrightarrow Y\subset p_{n-2,j}(\widetilde{N}_{-1})/\fol_j.
\]
The right implication is true by definition. For the left one, as in Yano’s proof of Lemma 2.1 \cite{Yano1982}, we observe that $Y$ is homeomorphic to $\Delta_j^j$, where $\Delta_j^j$ is obtained from the standard simplex $\Delta^j$ by hollowing inductively along the $\ell$-skeleton for all $\ell\in \{0, \ldots,j-1\}$ (see Example \ref{E: examples of hollowings} \ref{SE: Example of hollowing of simplex} above). From this it follows easily that we are in one of the following cases:
\begin{enumerate}
\item $Y\subset p_{n-2,j}(\widetilde{N}_{-1})/\fol_j$, or
\item $Y\cap p_{n-2,j}(\widetilde{N}_{-1})/\fol_j =\emptyset$.
\end{enumerate}
In the first case, we have
\[
\begin{array}{llr}
Y&\subset& X_j/\fol_j\cap p_{n-2,j}(\widetilde{N}_{-1})/\fol_j\\
& =& p_{n-2,j}(\widetilde{N}_j)/\fol_j\cap p_{n-2,j}(\widetilde{N}_{-1})/\fol_j\\
 &\subset& p_{n-2,j}(\widetilde{N}_{j,-1})/\fol_j\\
 & =&X_{j,-1}/\fol_j,
 \end{array}
 \]
where the last inclusion follows from 
\[
p_{n-2,j}(\widetilde{N}_j\setminus \widetilde{N}_{-1})/\fol_j\cap p_{n-2,j}(\widetilde{N}_{-1}\setminus \widetilde{N}_j)/\fol_j =\emptyset,
\]
which holds by construction of the hollowings. In the second case, we have $Y\cap X_{j,-1}/\fol_j=\emptyset$.

For $X_{j_1,\ldots,j_k,-1}$ with $k\geq 1$, it suffices to observe that 
\[
X_{j_1, \ldots,j_k,-1}/\fol_j=X_{j_1, \ldots, j_k}/\fol_j\cap X_{j_1, -1}/\fol_j.
\]
See also \cite[Lemma 4.2.8]{Fauserthesis} for more details.
\end{proof}

\begin{remark}\label{R: triangulation boundary}
We point out that we can take a refinement of the triangulation on $\partial (M_{n-2}/\fol_{n-2})$ so that  it is compatible with the decompositions
\begin{align*}
\partial  (M_{n-2}/\fol_{n-2})  &= \bigcup^{n-3}_{i=-1} \widetilde{N}_i/\fol_{n-2}
\end{align*} and
\begin{align*}
\partial   \left( \widetilde{N}_{i_1, \ldots, i_k} / \fol_{n-2}\right) &= \bigcup^{n-3}_{i=-1, i\neq i_1, \ldots, i_k} \widetilde{N}_{i_1, \ldots, i_k, i}/ \fol_{n-2}.
\end{align*}			
That is, each $\widetilde{N}_{i_1, \ldots, i_k, i}/\fol_{n-2}$ is a subcomplex of $ \partial (M_{n-2}/\fol_{n-2})$.
\end{remark}
\section{Proof of Theorem B}\label{S: Proof}

Now we establish the necessary preliminary results for the proof of our main theorem, which is carried out at the end of the section.

We will then use the triangulation of $M/\fol$ to construct a series of triangulations on the holonomy strata that have zero foliated simplicial volume.

%


\begin{prop}[See Proposition~4.1 in \cite{Fauser2019}]\label{pi1-injective}
Assume the inclusions of the leaves of the foliation $\fol$ are $\pi_1$-injective. Take $k \in \{1,\ldots, n-2\}$ and let $j_1,\ldots,j_k \in \{0,\ldots,n-3\}$ be pairwise distinct. Then, for any choice of basepoints, the inclusions $X_{j_1,\ldots,j_k}\subset M_{j_1}$ and $X_{j_1,\ldots,j_k,-1} \subset M_{j_1}$ are $\pi_1$-injective.
\end{prop}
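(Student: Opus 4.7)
The plan is to reduce the claimed $\pi_1$-injectivity to the hypothesized $\pi_1$-injectivity of leaves of $\fol$ in $M$, by factoring through the projection $p_{j_1,0}\colon M_{j_1}\to M$. Combining Lemma~\ref{L: X_j_1,...,j_k is contractible in orbit space} and Lemma~\ref{L: fibration over X_j_1,...,j_k}, each connected component $Y$ of $X_{j_1,\ldots,j_k}$ is foliated-diffeomorphic to the trivial bundle $\Delta_\ell^\ell\times\Sp^1$ (with $\ell=j_1-k+1$), whose $\Sp^1$-factors are the leaves of $\fol_{j_1}$ contained in $Y$. Since $\Delta_\ell^\ell$ is contractible, $\pi_1(Y)\cong\Z$, and any such circle fiber $L'$ generates it.

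The main step is then to show that the composition
\[
\pi_1(L')\xrightarrow{\ \cong\ }\pi_1(Y)\longrightarrow\pi_1(M_{j_1})\xrightarrow{(p_{j_1,0})_*}\pi_1(M)
\]
is injective, which immediately forces the middle arrow $\pi_1(Y)\to\pi_1(M_{j_1})$ to be injective. Iterating Proposition~\ref{P: The hollowing admits a foliation}, $p_{j_1,0}$ carries leaves of $\fol_{j_1}$ to leaves of $\fol$. The Local Reeb Stability model $\bar L_p\times_H T$ from Theorem~\ref{T: Tubular Neighborhood of compact leaf} describes nearby leaves of a circle leaf $L_p$ with finite holonomy $H$ as $|H|$-fold coverings of $L_p$, and this description is preserved under each hollowing: $p_i$ is a local diffeomorphism off the hollow wall, and on the hollow wall at $t=0$ it coincides with the sphere-bundle projection. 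Hence $p_{j_1,0}|_{L'}\colon L'\to L$ is a finite-degree covering of $\Sp^1$ onto some leaf $L$ of $\fol$, inducing multiplication by a positive integer on $\pi_1\cong\Z$, which is injective. Postcomposing with the assumed injection $\pi_1(L)\hookrightarrow\pi_1(M)$ yields injectivity of the full composition above.

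For the second assertion, Proposition~\ref{P: Connected components of X_j_1,...,j_k,-1} shows that every connected component of $X_{j_1,\ldots,j_k,-1}$ is also a connected component of $X_{j_1,\ldots,j_k}$, and $\pi_1$-injectivity is checked componentwise, so this case reduces to the first. The main obstacle is the verification that $p_{j_1,0}|_{L'}$ is a genuine finite-degree covering and not a degenerate (e.g.\ constant) map: this is a careful bookkeeping exercise, iterating the local Reeb stability model against the polar-coordinate model $\psi\colon\nu_S(N)\times[0,1]\to\nu(N)$ of each of the $j_1$ hollowing stages, tracking how the covering degrees accumulate as one composes $p_{j_1,0}=p_0\circ\cdots\circ p_{j_1-1}$.
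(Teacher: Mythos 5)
Your proof is correct and follows essentially the same route as the paper's: reduce the $-1$ case via Proposition~\ref{P: Connected components of X_j_1,...,j_k,-1}, identify each component of $X_{j_1,\ldots,j_k}$ as $\Delta_\ell^\ell\times\Sp^1$ via Lemmas~\ref{L: X_j_1,...,j_k is contractible in orbit space} and \ref{L: fibration over X_j_1,...,j_k}, and factor the inclusion through $p_{j_1,0}$ onto a leaf of $\fol$, whose inclusion is $\pi_1$-injective by hypothesis. Your additional observation that $p_{j_1,0}|_{L'}$ may be a finite-degree covering of a leaf (rather than literally ``the inclusion of a leaf,'' as the paper loosely states) is a legitimate refinement, but injectivity on $\pi_1\cong\Z$ holds either way.
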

\begin{proof}
By Proposition \ref{P: Connected components of X_j_1,...,j_k,-1}, it suffices to show that the inclusion $X_{j_1,\ldots,j_k}\subset M_{j_1}$ is $\pi_1$-injective. By Lemma \ref{L: fibration over X_j_1,...,j_k}, we have $X_{j_1,\ldots,j_k}\cong (X_{j_1,\ldots,j_k}/\fol_{j_1})\times \Sp^1$. By Lemma \ref{L: X_j_1,...,j_k is contractible in orbit space}, each connected component of $X_{j_1,\ldots,j_k}/\fol_{j_1}$ is contractible. Now, by Proposition \ref{P: The hollowing admits a foliation}, the composition of maps 
\[
X_{j_1,\ldots,j_k}\subset M_{j_1}\stackrel{p_{j_1, 0}}{\longrightarrow} M_0=M
\]
is the inclusion of leaves into $M$, 
and thus $\pi_1$-injective by hypothesis. Thus the inclusion $X_{j_1,\ldots,j_k}\subset M_{j_1}$ is also $\pi_1$-injective.
\end{proof}
We now construct a series of representations of the fundamental groups of the hollowings $M_j$ as follows:
\begin{setup}\label{Setup: restrictions of representations}
Fix $x_{n-2}\in M_{n-2}$, and set $x_i = p_{n-2,i}(x_{n-2})\in M_i$. We write $\Gamma = \pi_1(M,x_0)$ and consider a fixed essentially free  standard $\Gamma$-space $(Z,\mu)$, with the representation $\alpha_0 = \alpha\colon \Gamma\to \mathrm{Aut}(Z,\mu)$. From this representation, using the hollowing maps $p_{i,0}\colon M_i\to M$, we can define for $\Gamma_i = \pi_1 (M_i,x_i)$ a representation $\alpha_i\colon \Gamma_i\to \mathrm{Aut}(Z,\mu)$ by setting
\[
	\alpha_i = \alpha\circ \pi_1(p_{i,0}).
\]
Recall that we have a circle bundle $M_{n-2}\to M_{n-2}/\fol_{n-2}$ (see proof of Lemma~\ref{L: foliation over M_n-2}), which may be orientable or not. If the bundle structure of $M_{n-2}$ is orientable, then by the classification of oriented $\Sp^1$-bundles, it is trivial: by Lemma~\ref{L: foliation over M_n-2} $M_{n-2}/\fol_{n-2}$ has the homotopy type of a graph, and hence $H^2(M_{n-2}/\fol_{n-2}, \mathbb{Z})=0$. 

If the bundle structure of $M_{n-2}$ is non-orientable, since $M_{n-2}$ is orientable, then the base $M_{n-2}/\fol_{n-2}$ is non-orientable. For this case we need to change the Borel space we consider as follows: we take the oriented double cover of $M_{n-2}/\fol_{n-2}$. By pulling back the circle bundle we obtain an orientable double cover $W$ of $M_{n-2}$. This double cover is homotopy equivalent to an oriented circle bundle over 
 the double cover of $M_{n-2}/\fol_{n-2}$, denoted by $B$. Thus we get the following commutative diagram:


$$
\xymatrix
{
W \ar[r]^{\tilde{p}}_2\ar[d] & M_{n-2}\ar[d]\\
B \ar[r]^-p_-2 & M_{n-2}/\fol_{n-2}
}
$$
Observe that the fundamental group $H$ of $W$ is a subgroup of index $2$ of $\Gamma_{n-2}$, and thus we obtain a representation $\beta$ of $H$ on $(Z,\mu)$ by restricting $\alpha_{n-2}$. In the subsequent proofs, we will find parametrized relative fundamental cycles of $W$, and from them we will obtain appropriate parametrized fundamental cycles of $M_{n-2}$.  

To do so, as in \cite[Setup~4.24 and Definition~4.25]{LoehPagliantini2016}, set $\gamma_0= e$ and $\gamma_1$ be a fixed representative of the non-identity class in $\Gamma_{n-2}/H$. We denote the elements in
\[\Gamma_{n-2}\times_H Z=\Gamma_{n-2}\times Z/\{(\gamma h, z)\sim (\gamma, h\cdot z), \gamma\in\Gamma_{n-2}, h\in H, z\in Z\}\]
by $[\gamma,z]$, and $\Gamma_{n-2}$ acts on $\Gamma_{n-2}\times_H Z$ by $\gamma'[\gamma,z] = [\gamma'\gamma,z]$. The measure on $\Gamma_{n-2}\times_{H} Z$ is given as follows: we take the counting measure $\mu'$ on $\Gamma_{n-2}/ H$ and then pull back the measure $(1/2)\mu'\otimes \mu$ on $\Gamma_{n-2}/H \times Z$ via the bijection
\begin{align*}
\Gamma_{n-2}\times_{H} Z &\to \Gamma_{n-2}/H \times Z,\\
							[\gamma,z] & \mapsto (\gamma H,z).
\end{align*}
As in \cite[Proof of Proposition~4.26]{LoehPagliantini2016}, we have the following well-defined $\Z\Gamma_{n-2}$-isomorphism:
\begin{align*}
	\psi\colon L^\infty(Z,\Z)\otimes_{\Z H} \Z \Gamma_{n-2}&\longrightarrow L^\infty(\Gamma_{n-2}\times_H Z,\Z)\\[0.5em]
	f\otimes \gamma_j &\longmapsto \left([\gamma_k,z] \mapsto\begin{cases} f(z), & \mbox{if } k=j\\
	0, & \mbox{if } k\neq j
	\end{cases} \right).
\end{align*}
The induced map
\begin{align*}
 \Psi\colon C_{\ast}(W; \beta)&\longrightarrow C_{\ast}(M_{n-2}; \Gamma_{n-2}\times_H Z)\\[0.5em]
 f\otimes \sigma &\longmapsto \psi(f\otimes e)\otimes \sigma
\end{align*}
sends parametrized fundamental cycles of $W$ to parametrized fundamental cycles of $M_{n-2}$ \cite[Proof of Proposition~4.26]{LoehPagliantini2016}. The parametrized norms behave as follows:
\begin{equation}\label{E: parametrized volumes of double cover}
\oldnorm{\Psi(c)}^{\Gamma_{n-2}\times_H Z}\leq \frac{1}{2}\oldnorm{c}^Z
\end{equation}
for all $c\in C_{\ast}(W; \beta)$. 

Using the hollowings $p_i\colon M_{i+1}\to M_i$, we define $H_i = \pi_1(p_{n-2,i}\circ \tilde{p})(H) < \Gamma_i$. Note that $H_i$ has finite index in $\Gamma_i$ for every $i\in\{0, \ldots, n-2\}$. Indeed, the maps $\pi_1(p_{n-2,i})$ induced by the hollowings are surjective: the hollowing maps $p_i$ are quotient maps by construction, and their fibers are either a point, or a sphere of dimension at least $1$, hence connected. Then \cite[Theorem 1.1]{CalcutGompfMcCarthy2012} applies. Moreover, the index of $H$ in $\Gamma_{n-2}$ is $2$ by definition, so that $[\Gamma_i: H_i]\leq 2$.

We note that by construction of the representations $\alpha_i$, the restriction of $\alpha_i$ to $H_i$ is an essentially free action on $Z$.
The spaces we consider are $\Gamma_i\times_{H_i} Z$, which have an essentially free action of $\Gamma_i$ as above, denoted by $\beta_i$.

Depending on whether the circle bundle $M_{n-2}\to M_{n-2}/\fol_{n-2}$ is orientable or not,  for $i\in \{0,\ldots,n-2\}$, we consider $V_i$ equal to $Z$, respectively $\Gamma_i\times_{H_i} Z$, with a representation $\xi_i$ of $\Gamma_i$ given by $\alpha_i$, respectively $\beta_i$.

Let $\widetilde{p}_i\colon \widetilde{M}_{i+1}\to \widetilde{M}_i$ be a fixed lift to the universal covers of the map $p_i\colon M_{i+1}\to M_i$. 
We define a chain map 
\begin{align*}
P_{i}\colon L^\infty(\xi_{i+1},\Z)\otimes_{\Z \Gamma_{i+1}} C_\ast (\widetilde{M}_{i+1},\Z) &\longrightarrow L^\infty(\xi_{i},\Z)\otimes_{\Z \Gamma_{i}} C_\ast (\widetilde{M}_{i},\Z)\\[0.5em]
f\otimes \sigma &\longmapsto f\otimes (\widetilde{p}_i \circ\sigma).
\end{align*}
In this way we obtain the following sequence:
\[
	L^\infty(\xi_{n-2},\Z)\otimes_{\Z \Gamma_{n-2}} C_\ast (\widetilde{M}_{n-2},\Z)\overset{P_{n-3}}{\longrightarrow}\cdots \overset{P_{0}}{\longrightarrow} L^\infty(\xi_{0}, \Z)\otimes_{\Z \Gamma_{0}} C_\ast (\widetilde{M},\Z).
\]
For $i<j$ we can also consider the maps 
\[
P_{j,i} \colon L^\infty(\xi_{j},\Z)\otimes_{\Z \Gamma_{j}} C_\ast (\widetilde{M}_{j},\Z) \longrightarrow L^\infty(\xi_{i},\Z)\otimes_{\Z \Gamma_{i}} C_\ast (\widetilde{M}_{i},\Z),
\]
defined as $P_{j,i} = P_{i} \circ \cdots \circ P_{j-1}$. These maps will be used later in the proof of Theorem~\ref{Theorem: circle foliation implies the vanishing of foliated simplicial volume}.

For $X_{i_1,\ldots, i_k}\subset M_{i_1}$ we set $\Lambda_{i_1,\ldots, i_k}  = \pi_1(X_{i_1,\ldots, i_k})$. By the observation made in \cite[Setup~4.2]{Fauser2019}, these groups are independent of the base points chosen. By Proposition \ref{pi1-injective}, we have $\Lambda_{i_1,\ldots, i_k} < \Gamma_{i_1}$. We denote by $\xi'_{i_1,\ldots, i_k}$ the restriction of the representation $\xi_{i_1}$ to $  \Lambda_{i_1,\ldots,i_k} $. For the universal cover $q_{i_1}\colon \widetilde{M}_{i_1} \to M_{i_1}$, observe that $q_{i_1}^{-1}(X_{i_1, \ldots, i_k})$ is $\Gamma_{i_1}$-invariant. Hence we can consider the subcomplex 
\[
	L^\infty(\xi_{i_1},\Z)\otimes_{\Z \Gamma_{i_1}} C_\ast (q_{i_1}^{-1}(X_{i_1,\cdots,i_k}),\Z).
\]
\end{setup}
\begin{prop}
For the subcomplex $L^\infty(\xi_{i_1};\Z)\otimes_{\Z \Gamma_{i_1}} C_\ast (q_{i_1}^{-1}(X_{i_1,\cdots,i_k}),\Z)$ and the restriction $\xi'_{i_1,\cdots,i_k}$ we have an isomorphism from
\[
L^\infty(\xi_{i_1},\Z)\otimes_{\Z \Gamma_{i_1}} C_\ast (q_{i_1}^{-1}(X_{i_1,\cdots,i_k}),\Z) 
\]
 onto 
\[
 L^\infty(\xi'_{i_1,\cdots,i_k},\Z)\otimes_{\Z \Lambda_{i_1,\cdots,i_k}} C_\ast (X_{i_1,\cdots,i_k},\Z).
 \]
\end{prop}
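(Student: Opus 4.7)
The plan is to use the $\pi_1$-injectivity of the inclusion $X_{i_1,\ldots,i_k}\hookrightarrow M_{i_1}$ (Proposition~\ref{pi1-injective}) in order to identify the preimage $q_{i_1}^{-1}(X_{i_1,\ldots,i_k})$ inside $\widetilde M_{i_1}$ with a $\Gamma_{i_1}$-orbit of copies of the universal cover $\widetilde X_{i_1,\ldots,i_k}$, and then to carry out a standard change-of-rings manipulation of the tensor product.

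First, using the lifting criterion together with $\pi_1$-injectivity, I would show that every connected component of $q_{i_1}^{-1}(X_{i_1,\ldots,i_k})$ is (after fixing a lifted basepoint) a copy of $\widetilde X_{i_1,\ldots,i_k}$, and that the restricted deck action of $\Gamma_{i_1}$ permutes these components with stabilizer equal to $\Lambda_{i_1,\ldots,i_k}\leq \Gamma_{i_1}$ (up to conjugation). This gives a $\Z\Gamma_{i_1}$-equivariant chain isomorphism
\begin{equation*}
C_\ast(q_{i_1}^{-1}(X_{i_1,\ldots,i_k}),\Z)\;\cong\;\Z\Gamma_{i_1}\otimes_{\Z\Lambda_{i_1,\ldots,i_k}} C_\ast(\widetilde X_{i_1,\ldots,i_k},\Z).
\end{equation*}
Tensoring both sides with $L^\infty(V_{i_1},\Z)$ over $\Z\Gamma_{i_1}$ and using associativity cancels the intermediate copy of $\Z\Gamma_{i_1}$, which yields
\begin{equation*}
L^\infty(\xi_{i_1},\Z)\otimes_{\Z\Gamma_{i_1}} C_\ast(q_{i_1}^{-1}(X_{i_1,\ldots,i_k}),\Z)\;\cong\;L^\infty(\xi_{i_1},\Z)\otimes_{\Z\Lambda_{i_1,\ldots,i_k}} C_\ast(\widetilde X_{i_1,\ldots,i_k},\Z).
\end{equation*}
Since the $\Z\Lambda_{i_1,\ldots,i_k}$-action on $L^\infty(\xi_{i_1},\Z)$ obtained by restriction of $\xi_{i_1}$ is exactly $\xi'_{i_1,\ldots,i_k}$, this is the right-hand side of the claimed isomorphism, once $C_\ast(X_{i_1,\ldots,i_k},\Z)$ is read as the parametrized chain complex of $X_{i_1,\ldots,i_k}$ built from its universal cover.

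The main obstacle will be the bookkeeping around the possibly several connected components of $X_{i_1,\ldots,i_k}$: by Lemmas~\ref{L: X_j_1,...,j_k is contractible in orbit space} and \ref{L: fibration over X_j_1,...,j_k} each component is of the form $\Delta^{j_1-k+1}_{j_1-k+1}\times \Sp^1$, and $\pi_1$-injectivity from Proposition~\ref{pi1-injective} is stated per component relative to chosen basepoints. One therefore has to pick basepoints coherently, track how $\Gamma_{i_1}$ permutes components and how the stabilizer subgroups are conjugated accordingly, and verify that the resulting identifications assemble into an honest chain map intertwining the boundary operators. Once these compatibility checks are in place, the naturality of the covering-space decomposition and of the tensor product deliver the isomorphism without further work.
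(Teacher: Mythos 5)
Your argument is correct and is precisely the standard one: the paper itself gives no proof beyond a citation to Fauser's thesis/paper, and the cited argument is exactly your decomposition of $q_{i_1}^{-1}(X_{i_1,\ldots,i_k})$ into $\Gamma_{i_1}$-translates of $\widetilde X_{i_1,\ldots,i_k}$ (using $\pi_1$-injectivity) followed by the change-of-rings isomorphism $L^\infty(\xi_{i_1},\Z)\otimes_{\Z\Gamma_{i_1}}\bigl(\Z\Gamma_{i_1}\otimes_{\Z\Lambda_{i_1,\ldots,i_k}}C_\ast(\widetilde X_{i_1,\ldots,i_k},\Z)\bigr)\cong L^\infty(\xi'_{i_1,\ldots,i_k},\Z)\otimes_{\Z\Lambda_{i_1,\ldots,i_k}}C_\ast(\widetilde X_{i_1,\ldots,i_k},\Z)$. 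Your caveats about the several components and about reading the right-hand chain complex as built from the universal cover are exactly the points one must (and can) check.
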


\begin{proof}
See \cite[p. 12]{Fauser2019}.
\end{proof}

For  a fixed $\varepsilon>0$, we will show the existence of an essentially free $\Gamma$-space, and a representation of $\Gamma$, such that there is  a relative parametrized fundamental cycle of $M$ with $\ell^1$-norm bounded above by $\varepsilon$. We begin by finding such a cycle for $M_{n-2}$.

\begin{prop}\label{P: fundamental cycle of regular part}
Let $(M,\fol)$ be an oriented compact connected smooth $n$-manifold with a regular foliation by circles with finite holonomy. Assume that the inclusion of each  leaf into $M$ is $\pi_1$-injective. Set $\Gamma = \pi_1(M,x_0)$ and choose $\varepsilon >0$. There exists a relative fundamental cycle
	\[
		z\in C_n(M_{n-2};\xi_{n-2})
	\]
that has $\ell^1$-norm less than $\varepsilon$.
\end{prop}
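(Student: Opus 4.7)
The plan is to exploit the circle bundle structure of $M_{n-2}$ provided by Lemma~\ref{L: foliation over M_n-2}: $M_{n-2}\to M_{n-2}/\fol_{n-2}$ is an $\Sp^1$-bundle whose base has the homotopy type of a finite connected graph. Since $H^2$ of a graph vanishes, every orientable $\Sp^1$-bundle over it is trivial, which is precisely the structural dichotomy between the orientable and non-orientable cases already prepared in Setup~\ref{Setup: restrictions of representations}.

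In the orientable case, $M_{n-2}$ is homotopy equivalent to $\Sp^1\times B$, where $B=M_{n-2}/\fol_{n-2}$. I would pick any integral relative fundamental cycle $c_B$ of $(B,\partial B)$; its $\ell^1$-norm is some fixed finite constant. The task then reduces to producing a parametrized fundamental cycle of $\Sp^1$ of arbitrarily small $\ell^1$-norm and cross-producing it with $c_B$ via Eilenberg--Zilber. For the small cycle on $\Sp^1$ I would choose the parameter space $(Z,\mu)$ in Setup~\ref{Setup: restrictions of representations} to carry a measurable factor $\hat{\Z}$ with the translation action compatible with the injection $\Z\hookrightarrow\Gamma$ coming from a leaf of $\fol$ (which exists by the $\pi_1$-injectivity hypothesis). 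For each $N\geq 1$, the coset $A_N=N\hat{\Z}$ has measure $1/N$ and is $\gamma^N$-invariant, where $\gamma\in\Z$ is the fiber generator. A direct computation shows that $\chi_{A_N}\otimes \tilde{\sigma}_N$, with $\tilde{\sigma}_N\colon[0,1]\to\R$ the lift of the $N$-fold wrap of $\Sp^1$, is a cycle; the identity $\sum_{i=0}^{N-1}\chi_{A_N-i}=\chi_{\hat{\Z}}=1$ then identifies its class with the parametrized fundamental class $[\Sp^1]^{\xi_{n-2}}$. Its $\ell^1$-norm equals $1/N$, so the cross product with a lift of $c_B$ yields a parametrized relative fundamental cycle of $\Sp^1\times B$ of $\ell^1$-norm bounded by a constant times $\|c_B\|_1/N$, which is smaller than $\varepsilon$ for $N$ large.

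In the non-orientable case, I would pass to the oriented double cover $W\to M_{n-2}$ of Setup~\ref{Setup: restrictions of representations}: $W$ is an orientable $\Sp^1$-bundle over the oriented double cover $B$ of $M_{n-2}/\fol_{n-2}$, and since $B$ still has the homotopy type of a finite graph, $W\simeq \Sp^1\times B$. Applying the construction above to $W$ with respect to the restricted representation $\beta$ of $H$ on $(Z,\mu)$ yields a parametrized relative fundamental cycle of $W$ of $\ell^1$-norm less than $2\varepsilon$. Transferring it via the chain map $\Psi$ of the Setup and using inequality~\eqref{E: parametrized volumes of double cover} produces a parametrized relative fundamental cycle of $M_{n-2}$ with respect to $V_{n-2}=\Gamma_{n-2}\times_H Z$ and $\xi_{n-2}$ whose $\ell^1$-norm is at most half of that on $W$, hence less than $\varepsilon$.

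The hardest part will be coordinating the parameter space $(Z,\mu)$: it must be essentially free as a $\Gamma$-space (as Setup~\ref{Setup: restrictions of representations} demands) while simultaneously supporting the $\gamma^N$-invariant subsets $A_N$ on which the smearing on $\Sp^1$ relies. This is handled by arranging $(Z,\mu)$ as the product of a reference essentially free $\Gamma$-space with $\hat{\Z}$, where $\Gamma$ acts on the second factor through a homomorphism restricting to translation on the leaf subgroup $\Z\leq\Gamma$ (compare~\cite{Fauser2019, LoehPagliantini2016}). A secondary point is that the Eilenberg--Zilber cross product respects parametrized $\ell^1$-norms up to a universal multiplicative constant, which is a routine adaptation of the classical properties of the cross product on singular chains.
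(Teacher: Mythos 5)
Your overall route coincides with the paper's: the orientable/non-orientable dichotomy for the circle bundle $M_{n-2}\to M_{n-2}/\fol_{n-2}$, triviality of the orientable bundle over a base homotopy equivalent to a finite graph, a product cycle $\bar z\times c_{\Sp^1}$ with an arbitrarily small parametrized circle cycle, and, in the non-orientable case, passage to the oriented double cover $W$ followed by the transfer $\Psi$ and inequality~\eqref{E: parametrized volumes of double cover}. The one place where you diverge is the production of $c_{\Sp^1}$, and there your argument has a genuine gap.

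You build $c_{\Sp^1}$ explicitly on a $\widehat{\Z}$-factor and therefore need to \emph{choose} the parameter space so that the leaf subgroup acts on that factor by translation; you propose to obtain this by letting $\Gamma$ act on $\widehat{\Z}$ through a homomorphism restricting to translation on the leaf subgroup $\Z\leq\Gamma$. Such a homomorphism $\Gamma\to\widehat{\Z}$ need not exist: it factors through $H_1(\Gamma;\Z)$, and the generator of the leaf subgroup can die there. The integer Heisenberg manifold --- a circle bundle over $T^2$ whose fiber generates the center of $\pi_1$, which is contained in the commutator subgroup --- is a concrete case where every homomorphism to $\widehat{\Z}$ kills the leaf subgroup; the sets $A_N$ are then invariant under the (trivial) restricted action and $\chi_{A_N}\otimes\tilde\sigma_N$ represents $N\,\chi_{A_N}\otimes\tilde\sigma_1$ rather than the fundamental class. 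As written, your construction of $(Z,\mu)$ therefore fails in general. It is also unnecessarily restrictive: Setup~\ref{Setup: restrictions of representations} fixes an \emph{arbitrary} essentially free standard $\Gamma$-space $(Z,\mu)$, and the paper's closing remark (vanishing of the stable integral simplicial volume via the profinite completion) needs the proposition for every such $Z$, not for one specially manufactured one. The repair is standard and is what the paper does: by $\pi_1$-injectivity the leaf subgroup is infinite cyclic, the restriction of any essentially free $\Gamma$-action to it is again essentially free, and for an arbitrary essentially free $\Z$-action a Rokhlin-type argument --- this is exactly \cite[Lemma~10.8]{FauserLoeh2019}, see also \cite{LoehPagliantini2016} --- produces parametrized fundamental cycles of $\Sp^1$ of norm at most $1/N$ with no modification of $Z$. (If you insist on the explicit $\widehat{\Z}$ model, co-induction from the leaf subgroup to $\Gamma$, not a homomorphism $\Gamma\to\widehat{\Z}$, is the way to impose the structure.) Your cycle $\chi_{A_N}\otimes\tilde\sigma_N$ itself, the identification of its class with the parametrized fundamental class when the translation structure is available, the shuffle bound on the cross-product norm, and the double-cover reduction are all correct.
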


\begin{proof}
We consider two cases: when $M_{n-2}/\fol_{n-2}$ is orientable and when it is not. In the first case, since $M_{n-2}$ is orientable, then the circle bundle $M_{n-2}\to M_{n-2}/\fol_{n-2}$ is also orientable. Thus, as stated in the proof of Lemma~\ref{L: foliation over M_n-2}, we have $M_{n-2} \cong \left(M_{n-2}/\fol_{n-2}\right) \times \Sp^1$. Observe that, by Proposition \ref{P: The hollowing admits a foliation}, each leaf in $M_{n-2}$ is mapped by $p_{n-3,0}$ to a leaf of $(M,\fol)$. The subgroup $\Lambda$ of $\pi_1(M_{n-2}/\fol_{n-2})\times \pi_1(\Sp^1)$ generated by the circle factor corresponds under this homeomorphism  to the subgroup of $\Gamma_{n-2}$ generated by a leaf. Since the inclusion of any leaf is $\pi_1$-injective, then $(Z,\mu)$ is an essentially free standard $\Lambda$-space with respect to $\alpha'$, the restriction of $\alpha_{n-2}$ to $\Lambda$. From the proof of Lemma~10.8 in \cite{FauserLoeh2019}, given any relative fundamental cycle $\bar{z}$ of $M_{n-2}/\fol_{n-2}$, there exists a cycle $c_{\Sp^1}\in C_1(\Sp^1;\alpha')$ such that the relative fundamental cycle
\[
	z = \bar{z}\times c_{\Sp^1} \in C_n(M_{n-2};\alpha_{n-2}),
\]
has $\ell^1$-norm less than $\varepsilon$, as desired.

For the second case,  we consider the oriented double cover $\tilde{p}\colon W \to M_{n-2}$ of $M_{n-2}$, which is the total space of a trivial circle bundle over the orientable double cover $B$ of $M_{n-2}/\fol_{n-2}$.  Recall that $Z$ is an essentially free $H$-space, via the representation $\beta$.
As in the first case, since $B$ has the homotopy type of a $1$-complex, the fundamental group of the fiber $\Sp^1$ injects into $H$. Denote  by $\beta'$ the restriction of $\beta$ to this subgroup. Again by the proof of Lemma~10.8 in \cite{FauserLoeh2019}, for any relative fundamental cycle $\bar{u}$ of $B$, we can find a parametrized  cycle $c_{\Sp^1}\in C_1(\Sp^1;\beta')$ such that $\bar{u}\times c_{\Sp^1}$ has $\ell^1$-norm less than $2\varepsilon$.

Recall that the parametrized norms behave as follows
\[
\oldnorm{\Psi(c)}^{\Gamma_{n-2}\times_H Z}\leq \frac{1}{2}\oldnorm{c}^Z
\]
for all $c\in C_{\ast}(W; \beta)$. 
Hence, taking $c=\bar{u}\times c_{\Sp^1}\in C_{n}(W;\beta)$, we obtain a $\Gamma_{n-2}\times_H Z$-parametrized relative fundamental cycle $z=\Psi(c)$ for $M_{n-2}$ with 
\[
\oldnorm{z}^{\Gamma_{n-2}\times_H Z}< \varepsilon.
\]
\end{proof}


\begin{remark}
The hypothesis requiring that the inclusion of any  leaf is $\pi_1$-injective is too strong: it is sufficient that the action $\xi_{n-2}$ restricted to the image subgroup of the fundamental group of a  leaf remains essentially free. That is that for the action of $\Lambda$  on $Z$, the set of elements $x\in Z$ which have non-trivial isotropy has zero measure.
\end{remark}

We will now show the existence of a fundamental cycle of $(M,\partial M)$ with arbitrarily small $\ell^1$-norm. We first consider the case when the circle bundle $M_{n-2}\to M_{n-2}/\fol_{n-2}$ is orientable. We fix $\varepsilon>0$ and consider  $z = \bar{z}\times c_{\Sp^1}\in C_n(M_{n-2};\alpha_{n-2})$, the cycle obtained from Proposition~\ref{P: fundamental cycle of regular part}.  
For $n-3\geqslant i\geqslant -1$, we define a cycle $\bar{z}_i\in C_{n-2}(\widetilde{N}_i/\fol_{n-2};\Z)$ as the sum of all the simplices in $\partial \bar{z}$ that belong to the subcomplex $\widetilde{N}_i/\fol_{n-2} \subset \partial M_{n-2}/\fol_{n-2}$. We define
\[
z_i := \bar{z}_i \times c_{\Sp^1}\in C_{n-1}(\widetilde{N}_i;\alpha_{n-2}).
\]
In an analogous fashion,  for a subset of pairwise distinct indices $i_1,\ldots,i_k$ with $n-3\geqslant i_j \geqslant -1$,  we define cycles $\bar{z}_{i_1,\ldots,i_k}\in C_{n-1-k}(\widetilde{N}_{i_1,\ldots,i_k}/\fol_{n-2};\Z)$ as the sum of all the simplices of $\partial \bar{z}_{i_1,\ldots,i_{k-1}}$ contained in $\widetilde{N}_{i_1,\ldots,i_k}/\fol_{n-2}$. For non-pairwise distinct indices we set $\bar{z}_{i_1,\ldots,i_k}=0$. We define
\[
	z_{i_1,\ldots,i_k} := \bar{z}_{i_1,\ldots,i_k} \times c_{\Sp^1}\in C_{n-k}(\widetilde{N}_{i_1,\ldots,i_k};\alpha_{n-2}).
\]
When the circle bundle $M_{n-2}\to M_{n-2}/\fol_{n-2}$ is not orientable, we use its oriented double cover. 
Let $W\rightarrow B$ be the oriented double cover of $M_{n-2}\to M_{n-2}/\fol_{n-2}$, as in the Setup~\ref{Setup: restrictions of representations}:
$$
\xymatrix
{
W \ar[r]^{\tilde{p}}_2\ar[d] & M_{n-2}\ar[d]\\
B \ar[r]^-p_-2 & M_{n-2}/\fol_{n-2}
}
$$
We have corresponding preimages
$$
\xymatrix
{
\widetilde{W}\ar[r]\ar[d]_q& q_{n-2}^{-1}(\widetilde{N}_j)\subset \widetilde{M}_{n-2}\ar[d]_{q_{n-2}}\\
\tilde{p}^{-1}(\widetilde{N}_j)\subset W \ar[r]^{\tilde{p}}\ar[d] & \widetilde{N}_j\subset M_{n-2}\ar[d]\\
p^{-1}(\widetilde{N}_j/\fol_{n-2})\subset B \ar[r]^-p & \widetilde{N}_j/\fol_{n-2}\subset M_{n-2}/\fol_{n-2}
}
$$
We recall that $W\rightarrow B$ is a trivial circle bundle. Proposition \ref{P: fundamental cycle of regular part} thus gives us a parametrized relative fundamental cycle of the form $u = \bar{u}\times c_{\Sp^1} \in C_n(W; \beta)$, where $\bar{u}$ is any relative fundamental cycle of $B$.


For $n-3\geqslant i\geqslant -1$, we define a cycle $\bar{u}_i\in C_{n-2}(p^{-1}(\widetilde{N}_i/\fol_{n-2}), \Z)$ as the sum of all the simplices in $\partial \bar{u}$ that belong to the subcomplex $p^{-1}(\widetilde{N}_i/\fol_{n-2}) \subset \partial B$. We define
\[
u_i := \bar{u}_i \times c_{\Sp^1}\in C_{n-1}(\tilde{p}^{-1}(\widetilde{N}_i);\beta).
\]
In an analogous fashion,  for a subset of pairwise distinct indices $i_1,\ldots,i_k$ with $n-3\geqslant i_j \geqslant -1$,  we define cycles
\[
\bar{u}_{i_1,\ldots,i_k}\in C_{n-1-k}(p^{-1}(\widetilde{N}_{i_1,\ldots,i_k}/\fol_{n-2}),\Z),
\]
as the sum of all the simplices of $\partial \bar{u}_{i_1,\ldots,i_{k-1}}$ contained in $p^{-1}(\widetilde{N}_{i_1,\ldots,i_k}/\fol_{n-2})$. For non-pairwise distinct indices we set $\bar{u}_{i_1,\ldots,i_k}=0$. We define
\[
	u_{i_1,\ldots,i_k} := \bar{u}_{i_1,\ldots,i_k} \times c_{\Sp^1}\in C_{n-k}(\tilde{p}^{-1}(\widetilde{N}_{i_1,\ldots,i_k});\beta).
\]
Using the map $\Psi$ introduced in the proof of Proposition~\ref{P: fundamental cycle of regular part}, we write
\[
\begin{array}{lll}
z&=&\Psi(u)\in C_n(M_{n-2};\beta_{n-2}),\\
z_{i_1, \ldots, i_k}&=& \Psi(u_{i_1,\ldots,i_k})\in C_{n-k}(\widetilde{N}_{i_1,\ldots,i_k};\beta_{n-2}).
\end{array}
\]

With this notation, we have the following three lemmas.
\begin{lem}[See \cite{Fauser2019}, Lemma 6.1]\label{L: boundary of z, z_i}
We have
\[\partial z=\sum_{i=-1}^{n-3}z_i \,\mbox{ and } \,\partial z_{i_1, \ldots, i_k}=\sum_{i=-1}^{n-3}z_{i_1, \ldots, i_k, i}\]
for all $k\in\{1, \ldots, n-1\}$ and pairwise distinct $i_1, \ldots, i_k\in \{-1, ..., n-3\}$.
\end{lem}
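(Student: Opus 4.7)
The plan is to reduce both identities to purely combinatorial statements about boundaries of $\bar{z}$ (respectively $\bar{u}$) and its iterated restrictions. Two ingredients will do all the work: the Leibniz rule for the chain–level cross product combined with the fact that $c_{\Sp^1}$ is a cycle, and the compatibility of the chosen triangulation with the subcomplex decomposition recorded in Remark~\ref{R: triangulation boundary}.

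First I treat the orientable case, where $z_{i_1,\ldots,i_k}=\bar{z}_{i_1,\ldots,i_k}\times c_{\Sp^1}$. Since $\partial c_{\Sp^1}=0$, the Leibniz rule gives
\[
\partial\bigl(\bar{z}_{i_1,\ldots,i_k}\times c_{\Sp^1}\bigr)\;=\;\partial\bar{z}_{i_1,\ldots,i_k}\times c_{\Sp^1},
\]
so it suffices to prove the two analogous equalities downstairs, namely $\partial\bar{z}=\sum_{i=-1}^{n-3}\bar{z}_i$ and $\partial\bar{z}_{i_1,\ldots,i_k}=\sum_{i=-1}^{n-3}\bar{z}_{i_1,\ldots,i_k,i}$. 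For the first, $\bar{z}$ is a relative fundamental cycle of $M_{n-2}/\fol_{n-2}$, so $\partial\bar{z}$ is supported on $\partial(M_{n-2}/\fol_{n-2})=\bigcup_{i=-1}^{n-3}\widetilde{N}_i/\fol_{n-2}$. By Remark~\ref{R: triangulation boundary}, the refinement of the triangulation is chosen so that each $\widetilde{N}_i/\fol_{n-2}$ is a subcomplex, and each top–dimensional $(n-2)$–simplex appearing in $\partial\bar{z}$ lies in the interior of exactly one component $\widetilde{N}_i/\fol_{n-2}$ (the pairwise intersections being simplicial of strictly smaller dimension). Partitioning the simplices of $\partial\bar{z}$ by the index of the boundary stratum containing them therefore yields precisely $\partial\bar{z}=\sum_i\bar{z}_i$. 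For the inductive step, $\bar{z}_{i_1,\ldots,i_k}$ is a chain in $\widetilde{N}_{i_1,\ldots,i_k}/\fol_{n-2}$, whose boundary is supported on
\[
\partial\bigl(\widetilde{N}_{i_1,\ldots,i_k}/\fol_{n-2}\bigr)\;=\;\bigcup_{i\neq i_1,\ldots,i_k}\widetilde{N}_{i_1,\ldots,i_k,i}/\fol_{n-2},
\]
again by Remark~\ref{R: triangulation boundary}; the same top–dimensionality argument, together with the convention that $\bar{z}_{i_1,\ldots,i_k}=0$ whenever the indices are not pairwise distinct, gives $\partial\bar{z}_{i_1,\ldots,i_k}=\sum_i\bar{z}_{i_1,\ldots,i_k,i}$.

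For the non–orientable case, the identical argument applied to the relative fundamental cycle $\bar{u}$ of the orientable double cover $B$ and to its iterated restrictions $\bar{u}_{i_1,\ldots,i_k}$ supported in $p^{-1}(\widetilde{N}_{i_1,\ldots,i_k}/\fol_{n-2})$, together with $\partial c_{\Sp^1}=0$, yields
\[
\partial u\;=\;\sum_{i=-1}^{n-3}u_i\quad\text{and}\quad \partial u_{i_1,\ldots,i_k}\;=\;\sum_{i=-1}^{n-3}u_{i_1,\ldots,i_k,i}.
\]
The map $\Psi$ of Setup~\ref{Setup: restrictions of representations} is induced by the $\Z\Gamma_{n-2}$–isomorphism $\psi$ tensored with the identity on singular chains; in particular it is a chain map and hence commutes with $\partial$. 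Since by construction $z=\Psi(u)$ and $z_{i_1,\ldots,i_k}=\Psi(u_{i_1,\ldots,i_k})$, applying $\Psi$ to the two equalities above delivers the statement of the lemma.

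The only real obstacle is bookkeeping: one must verify that at each stage the partition of $\partial\bar{z}_{i_1,\ldots,i_{k-1}}$ according to which boundary stratum of $\widetilde{N}_{i_1,\ldots,i_{k-1}}/\fol_{n-2}$ contains a given top–dimensional simplex is both exhaustive and non–overlapping. This is precisely what Remark~\ref{R: triangulation boundary} guarantees, and the rest is a formal consequence of the Leibniz rule and the chain–map property of $\Psi$.
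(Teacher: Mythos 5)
Your proposal is correct and follows essentially the same route as the paper's proof: reduce to the corresponding identities for $\bar{z}$ (respectively $\bar{u}$ on the double cover) using that $c_{\Sp^1}$ is a cycle, decompose $\partial\bar{z}$ along the subcomplexes $\widetilde{N}_i/\fol_{n-2}$ guaranteed by Remark~\ref{R: triangulation boundary}, and transport the result through the chain map $\Psi$ in the non-orientable case. Your explicit remark that the pairwise intersections of the boundary strata are lower-dimensional, so the partition of top-dimensional simplices is well defined, is a small but welcome clarification that the paper leaves implicit.
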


\begin{proof}
In the orientable case, by definition of $z$ and $z_{i_1, \ldots, i_k}$, it is enough to show the analogous statements for $\bar{z}$ and $\bar{z}_{i_1, \ldots, i_k}$.

Recall from Remark~\ref{R: triangulation boundary} that the boundary  $\partial(M_{n-2}/\fol_{n-2})$ is a union of  subcomplexes $\cup^{n-3}_{i=-1} \widetilde{N}_i^\ast$ of the simplicial structure on $\partial(M_{n-2}/\fol_{n-2})$. It follows that
\[
\partial\bar{z}=\partial\bar{z}|_{\partial (M_{n-2}/\fol_{n-2})}=\sum_{i=-1}^{n-3}\partial\bar{z}|_{\widetilde{N}_{i}^\ast}=\sum_{i=-1}^{n-3}\bar{z}_i.
\]
Moreover, for all $k\in\{1, \ldots, n-1\}$ and all pairwise distinct $i_1, \ldots, i_k\in \{-1, ..., n-3\}$, we have that
\begin{align*}
\partial\bar{z}_{i_1, \ldots, i_k} &=\partial\bar{z}_{i_1, \ldots, i_k}|_{\partial (\widetilde{N}_{i_1, \ldots, i_k}/\fol_{n-2})}\\[0.8em]
&=\sum_{i\neq i_1, \ldots, i_k}\partial\bar{z}_{i_1, \ldots, i_k}|_{\widetilde{N}_{i_1, \ldots, i_k, i}/\fol_{n-2}}\\[0.8em]
&=\sum_{i=-1}^{n-3}\bar{z}_{i_1, \ldots, i_k, i}.
\end{align*}
In the non-orientable case, we compute:
\[
\partial\bar{z}=\partial\Psi(u)=\Psi(\partial(u))=\Psi(\partial(\bar{u})\times c_{\Sp^1}).
\]
Then we remark
\[
\partial\bar{u}=\partial\bar{u}|_{\partial (p^{-1}(M_{n-2}/\fol_{n-2}))}=\sum_{i=-1}^{n-3}\partial\bar{u}|_{p^{-1}(\widetilde{N}_{i}^\ast)}=\sum_{i=-1}^{n-3}\bar{u}_i.
\]
We insert it in the previous computation and obtain the conclusion.

An analogous reasoning shows also the formula for $\partial z_{i_1, \ldots, i_k}$.
\end{proof}
\begin{lem}[See \cite{Fauser2019}, Lemma 6.2]\label{L: alternation}
Let $k\in \{1, \ldots, n-1\}$ and let $\tau\in \mathrm{Sym}(k)$ be a permutation of $\{1, \ldots, k\}$. Then we have
\[
z_{i_1, \ldots, i_k}=\mathrm{sign}(\tau)z_{i_{\tau(1)}, \ldots, i_{\tau(k)}}.
\]
\end{lem}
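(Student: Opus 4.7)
The plan is to reduce the statement to the analogous claim for the chains $\bar{z}_{i_1,\ldots,i_k}$ in the orientable case, and $\bar{u}_{i_1,\ldots,i_k}$ in the non-orientable case. In both settings, $z_{i_1,\ldots,i_k}$ is obtained as a cross product of one of these chains with the single cycle $c_{\Sp^1}$, composed with the $\Z\Gamma_{n-2}$-equivariant chain map $\Psi$ in the non-orientable case; since neither operation involves the multi-index, any sign obtained at the level of $\bar{z}$ (or $\bar{u}$) transfers verbatim to $z$.

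I would then proceed by induction on $k$, focusing on the $\bar{z}$ version (the $\bar{u}$ version being identical). Since $\mathrm{Sym}(k)$ is generated by $\mathrm{Sym}(k-1)$ (acting on the first $k-1$ entries) together with the adjacent transposition $\tau_{k-1} = (k-1,k)$, only these two cases require attention. For $\sigma \in \mathrm{Sym}(k-1)$, the inductive hypothesis gives $\bar{z}_{i_1,\ldots,i_{k-1}} = \mathrm{sign}(\sigma)\, \bar{z}_{i_{\sigma(1)},\ldots,i_{\sigma(k-1)}}$; because $\bar{z}_{i_1,\ldots,i_{k-1},i_k}$ is by construction the part of $\partial \bar{z}_{i_1,\ldots,i_{k-1}}$ supported on $\widetilde{N}_{i_k}/\fol_{n-2}$, and this operation is $\Z$-linear, the sign carries through.

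The heart of the argument is the transposition $\tau_{k-1}$, which I would treat using $\partial^2 = 0$. Applying Lemma~\ref{L: boundary of z, z_i} twice yields
\[
0 \;=\; \partial^2 \bar{z}_{i_1,\ldots,i_{k-2}} \;=\; \sum_{j,\ell}\, \bar{z}_{i_1,\ldots,i_{k-2},j,\ell},
\]
with the convention that terms with repeated indices vanish. Restricting this identity to top-dimensional chains supported on the subcomplex $\widetilde{N}_{i_1,\ldots,i_{k-2},i_{k-1},i_k}/\fol_{n-2}$ should leave only the two summands with $\{j,\ell\}=\{i_{k-1},i_k\}$, producing
\[
0 \;=\; \bar{z}_{i_1,\ldots,i_{k-2},i_{k-1},i_k} + \bar{z}_{i_1,\ldots,i_{k-2},i_k,i_{k-1}},
\]
which is precisely the desired sign flip.

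The main obstacle is justifying that the above restriction kills every other term. The chain $\bar{z}_{i_1,\ldots,i_{k-2},j,\ell}$ consists of top-dimensional simplices of the $(n-1-k)$-dimensional subcomplex $\widetilde{N}_{i_1,\ldots,i_{k-2},j,\ell}/\fol_{n-2}$, and when $\{j,\ell\} \neq \{i_{k-1},i_k\}$ its intersection with $\widetilde{N}_{i_1,\ldots,i_{k-2},i_{k-1},i_k}/\fol_{n-2}$ lies in a stratum of strictly smaller dimension, hence cannot contain any top-dimensional simplex of the latter. This dimension count rests on the compatibility of the triangulation with the stratification (Section~\ref{SSS: triangulation}) and the decomposition of $\partial(M_{n-2}/\fol_{n-2})$ recorded in Remark~\ref{R: triangulation boundary}, and once in hand it closes the argument in the spirit of Lemma~6.2 of \cite{Fauser2019}.
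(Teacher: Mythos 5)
Your proof is correct and follows essentially the same route as the paper's: reduce to the chains $\bar z$ (resp.\ $\bar u$, then apply $\Psi$), reduce the general permutation to the adjacent transposition of the last two indices, and extract the sign flip from $\partial^2\bar z_{i_1,\ldots,i_{k-2}}=0$ together with the observation that simplices supported on subcomplexes with different index sets cannot cancel. Your inductive organization of the reduction and the explicit dimension count merely spell out what the paper states tersely.
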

\begin{proof}
We may assume that $\tau$ is a transposition $(i_j, i_{j+1})$. By definition of $\widetilde{N}_{i_1,\ldots,i_k}$ and $z_{i_1, \ldots, i_k}$, we may even assume that $\tau=(i_{k-1}, i_k)$. Thus we have to show that 
\[
z_{i_1, \ldots,i_{k-1}, i_k}=-z_{i_1, \ldots, i_{k-2}, i_k, i_{k-1}}.
\]
By Lemma \ref{L: boundary of z, z_i}, we have
\[
0=\partial\partial z_{i_1, \ldots,i_{k-2}}=\partial\left(\sum_{i=-1}^{n-3} z_{i_1, \ldots,i_{k-2}, i}\right)=\sum_{j=-1}^{n-3}\sum_{i=-1}^{n-3}z_{i_1, \ldots,i_{k-2}, i, j}.
\]
In the orientable case, since $\partial(\widetilde{N}_{i_1, ..., i_{k-2}}/\fol_{n-2})$ is a subcomplex of the simplicial structure on $M_{n-2}/\fol_{n-2}$, and from the definition of $z_{i_1, \ldots,i_{k-2}}$,
it follows that cancellations may occur only between terms with the same set of indices. Hence the only possibility is
\[z_{i_1, \ldots,i_{k-1}, i_k}=-z_{i_1, \ldots, i_{k-2}, i_k, i_{k-1}}.\]
In the non-orientable case, the exact same relations hold for the $u_{i_1, \ldots, i_k}$. Then apply the chain map $\Psi$ to finish the proof.
\end{proof}
\begin{lem}[See \cite{Fauser2019}, Lemma 6.3]\label{L: efficient fillings}
There exist chains 
\[
	w_{i_1,\ldots,i_k}\in C_{n-k+1}(X_{i_1,\ldots,i_k};\xi_{i_1,\ldots,i_k}'),
\]
\[
	w_{i_1,\ldots,i_k,-1}\in C_{n-k}(X_{i_1,\ldots,i_{k},-1};\xi_{i_1,\ldots,i_k,-1}'),
\]
with $k\in\{1, \ldots, n-2\}$ and $i_1, ..., i_k\in\{0, \ldots, n-3\}$, and a constant $C\in \R_+$ depending only on $k$, such that
\begin{enumerate}[(i)]
\item the chains $w_{i_1,\ldots,i_k}$ and $w_{i_1,\ldots,i_{k},-1}$ are alternating with respect to permutations of the indices $\{i_1, \ldots, i_k\}$;
\item the following relations hold:
\begin{align*}
\bullet\; \partial w_{i_1,\ldots,i_k} &= P_{n-2,i_1}(z_{i_1,\ldots,i_k}) - \sum^{n-3}_{i=-1} w_{i_1,\ldots,i_k,i},\\[0.8em]
\bullet\;\partial w_{i_1,\ldots,i_{n-2},-1} &= P_{n-2,i_1}( z_{i_1,\ldots,i_{n-2},-1});
\end{align*}
\item  $|w_{i_1,\ldots,i_k}|_1 \leqslant C |z|_1$. The index $i_k$ is allowed to take the value $-1$.
\end{enumerate}
\end{lem}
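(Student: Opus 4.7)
The plan is to proceed by downward induction on $k$, from $k = n-2$ down to $k = 1$, constructing at each stage both $w_{i_1,\ldots,i_k}$ and $w_{i_1,\ldots,i_k,-1}$. The geometric input is that by Lemma~\ref{L: X_j_1,...,j_k is contractible in orbit space} and Lemma~\ref{L: fibration over X_j_1,...,j_k}, each connected component of $X_{i_1,\ldots,i_k}$ is diffeomorphic to a product $\Delta \times \Sp^1$ with $\Delta$ contractible, and by Proposition~\ref{P: Connected components of X_j_1,...,j_k,-1} the same holds componentwise for $X_{i_1,\ldots,i_k,-1}$. Hence each such component has the homotopy type of $\Sp^1$, its universal cover is contractible, and by Proposition~\ref{pi1-injective} the inclusion into $M_{i_1}$ is $\pi_1$-injective. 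Combined with Setup~\ref{Setup: restrictions of representations}, the restricted representation $\xi'_{i_1,\ldots,i_k}$ remains essentially free whether $\xi_{i_1}=\alpha_{i_1}$ or $\xi_{i_1}=\beta_{i_1}$. The isomorphism from the proposition immediately preceding this lemma then lets me work throughout in the parametrized chain complex of $X_{i_1,\ldots,i_k}$ with this restricted representation.

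For the base case $k=n-2$, the indices $i_1,\ldots,i_{n-2}$ must exhaust $\{0,\ldots,n-3\}$ (otherwise $z_{i_1,\ldots,i_{n-2}}=0$ by Lemma~\ref{L: alternation}), and Lemma~\ref{L: boundary of z, z_i} then yields $\partial z_{i_1,\ldots,i_{n-2}}=z_{i_1,\ldots,i_{n-2},-1}$, so $z_{i_1,\ldots,i_{n-2},-1}$ is automatically a cycle. I first produce $w_{i_1,\ldots,i_{n-2},-1}$ as an efficient parametrized filling of $P_{n-2,i_1}(z_{i_1,\ldots,i_{n-2},-1})$ in $X_{i_1,\ldots,i_{n-2},-1}$, and then fill $P_{n-2,i_1}(z_{i_1,\ldots,i_{n-2}}) - w_{i_1,\ldots,i_{n-2},-1}$ by a chain $w_{i_1,\ldots,i_{n-2}}$ in $X_{i_1,\ldots,i_{n-2}}$. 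For the inductive step, assuming all $w_{i_1,\ldots,i_k,i}$ have been constructed, I verify that
\[
c \;:=\; P_{n-2,i_1}(z_{i_1,\ldots,i_k}) - \sum_{i=-1}^{n-3} w_{i_1,\ldots,i_k,i}
\]
is a cycle: expanding $\partial c$ via Lemma~\ref{L: boundary of z, z_i} and the inductive boundary formula, the $P_{n-2,i_1}(z_{i_1,\ldots,i_k,i})$ terms cancel and what remains is the antisymmetric double sum $\sum_{i,j} w_{i_1,\ldots,i_k,i,j}$, which vanishes because the higher-level $w$'s have already been made alternating. I then fill $c$ by $w_{i_1,\ldots,i_k}$ using the contractible universal cover of $X_{i_1,\ldots,i_k}$.

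Alternation with respect to permutations of $(i_1,\ldots,i_k)$ is enforced by antisymmetrizing the constructed filling over $\mathrm{Sym}(k)$; since both $z_{i_1,\ldots,i_k}$ (Lemma~\ref{L: alternation}) and the already-built higher-level $w$'s are alternating, the antisymmetrization preserves the required boundary relation, and the $\ell^1$-norm grows by at most a factor of $k!$, which is absorbed into $C$. The norm bound $|w_{i_1,\ldots,i_k}|_1\leq C|z|_1$ itself follows because the filling is done in a space whose universal cover is contractible and whose deck group acts essentially freely on $V_{i_1}$: the parametrized chain complex then admits a bounded chain contraction in the relevant degrees, so a cycle admits a primitive with $\ell^1$-norm controlled by a constant depending only on the combinatorics of a fixed contraction model, hence only on $k$. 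The main obstacle is precisely this quantitative control of the chain contraction in the parametrized setting, particularly in the non-orientable case of Setup~\ref{Setup: restrictions of representations} where $V_{i_1} = \Gamma_{i_1}\times_{H_{i_1}} Z$ rather than $Z$; essential freeness of $\xi'_{i_1,\ldots,i_k}$ in either case, together with the asphericity of $X_{i_1,\ldots,i_k}$, is exactly what makes the bounded filling possible and yields the uniform constant $C$.
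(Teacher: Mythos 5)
Your inductive skeleton (downward induction on $k$, verifying that $c = P_{n-2,i_1}(z_{i_1,\ldots,i_k})-\sum_i w_{i_1,\ldots,i_k,i}$ is a cycle via Lemmas~\ref{L: boundary of z, z_i} and \ref{L: alternation}, then antisymmetrizing) matches the structure of the argument the paper imports from \cite{Fauser2019}. But the step you yourself flag as ``the main obstacle'' --- the norm bound (iii) --- is exactly where your proposal has a genuine gap, and it is resolved in the actual proof by a mechanism you do not use. You propose to fill the parametrized cycle $c$ directly in $C_*(X_{i_1,\ldots,i_k};\xi'_{i_1,\ldots,i_k})$ and claim that, because each component of $X_{i_1,\ldots,i_k}$ is aspherical with contractible universal cover and the restricted action is essentially free, the parametrized chain complex ``admits a bounded chain contraction in the relevant degrees'' with a constant depending only on $k$. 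This is unsubstantiated: contractibility of the universal cover $\R\times\Delta$ gives a chain contraction that is neither equivariant nor norm-controlled after tensoring over $\Z\Lambda$ with $L^\infty(V_{i_1},\Z)$, and essential freeness by itself does not produce a uniform (parametrization-independent) integral filling constant for arbitrary cycles on a space homotopy equivalent to $\Sp^1$. Indeed, the reason one must pass to the parametrized setting at all is that integral cycles on $\Sp^1$ cannot be filled efficiently; asserting a universal bounded contraction here is assuming a parametrized uniform boundary condition that is nowhere established.

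The proof the paper relies on sidesteps this entirely by exploiting the product structure $X_{i_1,\ldots,i_k}\cong (X_{i_1,\ldots,i_k}/\fol_{i_1})\times\Sp^1$ of Lemma~\ref{L: fibration over X_j_1,...,j_k}: the fillings are taken of the form $w_{i_1,\ldots,i_k}=\bar w_{i_1,\ldots,i_k}\times c_{\Sp^1}$, where $\bar w_{i_1,\ldots,i_k}$ is an \emph{integral} (unparametrized) filling of $\bar z_{i_1,\ldots,i_k}$ minus the lower-level $\bar w$'s inside the contractible complex $\Delta^{j}_{j}$ of Lemma~\ref{L: X_j_1,...,j_k is contractible in orbit space}, produced by an explicit cone construction whose norm constant depends only on the dimension, hence only on $k$. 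The smallness and the parametrization are carried entirely by the single fixed cycle $c_{\Sp^1}$ from Proposition~\ref{P: fundamental cycle of regular part} (this is the only place essential freeness is used), so that $|w_{i_1,\ldots,i_k}|_1\leq C\,|\bar z|_1\cdot|c_{\Sp^1}|_1\leq C|z|_1$; in the non-orientable case one runs this in $W\to B$ and pushes forward with $\Psi$. To repair your argument you would either have to adopt this product construction, or actually prove the parametrized uniform boundary condition you are invoking.
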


\begin{proof}
Recall that $X_{i_1,\ldots,i_k} \cong (X_{i_1,\ldots,i_k} / \fol_{i_1}) \times \Sp^1$, and that for both the cases of orientability of the circle bundle $M_{n-2}\to M_{n-2}/\fol_{n-2}$ we have a series of essentially free representations of the fundamental groups of the hollowings $M_i$.  To prove this Lemma, we apply  Lemmas \ref{L: boundary of z, z_i}, \ref{L: alternation} above to the proof of \cite[Lemma~6.3]{Fauser2019} for the essentially free representations $\alpha_i$.
%
\end{proof}

Now we are ready to prove the main theorem of the present note. 

\begin{proof}[\bfseries{Proof 
of Theorem~\ref{Theorem: circle foliation implies the vanishing of foliated simplicial volume}}]
Given $\varepsilon>0$, from Proposition~\ref{P: fundamental cycle of regular part}, there exists a (relative) fundamental cycle $z\in C_{n}(M_{n-2};\xi_{n-2})$ with $\ell^1$-norm less than $\varepsilon$. From Lemma~\ref{L: efficient fillings}, we have associated chains $w_{i}\in C_{n}(X_i;\xi'_{i})$ with $\ell^1$-norm less than $C|z|_1$, where $C$ is the constant of Lemma~\ref{L: efficient fillings}.  We claim that the chain
\[
	z':= P_{n-2,0}(z)-\sum^{n-3}_{i=0} P_{i,0}(w_i)\in C_{n}(M;\xi)
\]
is a $\xi$-parametrized relative fundamental cycle of $M$. From \cite[Proposition~3.9]{Fauser2019}, it is sufficient to show that $z'$ is a $U$-local $\xi$-parametrised relative fundamental cycle of $M$, for an arbitrary open subset $U\subset M\setminus p_{n-2,0}(\partial M_{n-2})$ diffeomorphic to an $n$-disk.  Since the boundary of $M_{n-2}$ contains all the preimages of the hollowings, we have $p_{n-2}(z) = z'$ on any small ball in $M\setminus p_{n-2,0}(\partial M_{n-2})$. The rest of the proof of the claim follows from the proof of \cite[Theorem~1.1]{Fauser2019}. 

We recall that for $f\otimes \sigma \in C_{n}(M_{n-2};\xi_{n-2})$ we have 
\[
	P_{n-2, 0}(f\otimes \sigma) = f\otimes \widetilde{p}_{0}\circ\cdots \circ \widetilde{p}_{n-3}(\sigma)\in C_n(M; \xi).
\]
Thus we have that
\[
	|z'|_1\leqslant |z|_1+\sum_{i=0}^{n-3}|w_i|_1 < \varepsilon,
\]
when we choose $z\in C_{n}(M_{n-2};\xi_{n-2})$ such that $|z|_1 <\frac{\varepsilon}{(n-2)C+1}$.
\end{proof}

\begin{remark}
In the case when the circle bundle $M_{n-2}\to M_{n-2}/\fol_{n-2}$ is orientable, the parametrized norm vanishes for arbitrary essentially free $\Gamma$-spaces. In particular, when $\Gamma$ is residually finite, its profinite completion is such an essentially free $\Gamma$-space. By \cite[Theorem 2.6]{frigerio-loeh-pagliantini-sauer}, this shows that the stable integral simplicial volume of $M$ also vanishes.

If the bundle $M_{n-2}\to M_{n-2}/\fol_{n-2}$ is not orientable, our proof shows that starting from any measured $\Gamma$-space $Z$, we find arbitrarily small fundamental cycles parametrized by the space $\Gamma\times_{H_0} Z$ with the action $\beta_0$, where $H_0=\pi_1( p_{n-2, 0}\circ\tilde{p})(H)$. 
This however does not allow us a priori to conclude about the action on the profinite completion of $\Gamma$, except if $H_0=\Gamma$. In this case, the space $\Gamma\times_{H_0} Z$ can be identified with $Z$;  the action $\beta_0$ on $\Gamma\times_{H_0} Z$ corresponds under this identification to the original action $\gamma\cdot z=\alpha(\gamma)(z)$ of $\Gamma$ on $Z$.
Thus, setting $Z$ to be the profinite completion of $\Gamma$ implies again, by \cite[Theorem 2.6]{frigerio-loeh-pagliantini-sauer}, that the stable integral simplicial volume of $M$ vanishes.
\end{remark}

%
%
%
%
%
%
%
%

\bibliographystyle{siam}
\bibliography{Tesis_Doc}

\begin{thebibliography}{10}

\bibitem{Alexandrino}
{\sc M.~M. Alexandrino and R.~G. Bettiol}, {\em Lie groups and geometric
  aspects of isometric actions}, Springer, Cham, 2015.

\bibitem{CalcutGompfMcCarthy2012}
{\sc J.~S. Calcut, R.~E. Gompf, and J.~D. McCarthy}, {\em On fundamental groups
  of quotient spaces}, Topology Appl., 159 (2012), pp.~322--330.

\bibitem{CandelConlon2000}
{\sc A.~Candel and L.~Conlon}, {\em Foliations. {I}}, vol.~23 of Graduate
  Studies in Mathematics, American Mathematical Society, Providence, RI, 2000.

\bibitem{CheegerGromov1986}
{\sc J.~Cheeger and M.~Gromov}, {\em Collapsing {R}iemannian manifolds while
  keeping their curvature bounded. {I}}, J. Differential Geom., 23 (1986),
  pp.~309--346.

\bibitem{EdmondsFintushel1976}
{\sc A.~L. Edmonds and R.~Fintushel}, {\em Singular circle fiberings}, Math.
  Z., 151 (1976), pp.~89--99.

\bibitem{Fauser2019}
{\sc D.~Fauser}, {\em Integral foliated simplicial volume and
  {$S^{1}$}-actions}, \href{https://arxiv.org/abs/1704.08538}{arXiv:1704.08538
  [math.GT]},  (2019).

\bibitem{Fauserthesis}
\leavevmode\vrule height 2pt depth -1.6pt width 23pt, {\em Parametrised
  simplicial volume and $S^1$-actions}, PhD thesis, Universität {R}egensburg,
  URN:NBN:DE:BVB:355-EPUB-404319, 2019.
\newblock \url{https://epub.uni-regensburg.de/40431/}.

\bibitem{FauserFriedlLoeh2019}
{\sc D.~Fauser, S.~Friedl, and C.~Löh}, {\em Integral approximation of
  simplicial volume of graph manifolds}, Bull. Lond. Math. Soc., 51 (2019),
  pp.~715--731.

\bibitem{FauserLoeh2019}
{\sc D.~Fauser and C.~Löh}, {\em Variations on the theme of the uniform
  boundary condition}, Journal of Topology and Analysis,  (2019).

\bibitem{frigerio-loeh-pagliantini-sauer}
{\sc R.~Frigerio, C.~L\"{o}h, C.~Pagliantini, and R.~Sauer}, {\em Integral
  foliated simplicial volume of aspherical manifolds}, Israel J. Math., 216
  (2016), pp.~707--751.

\bibitem{Goresky1978}
{\sc R.~M. Goresky}, {\em Triangulation of stratified objects}, Proc. Amer.
  Math. Soc., 72 (1978), pp.~193--200.

\bibitem{Gromov1982}
{\sc M.~Gromov}, {\em Volume and bounded cohomology}, Inst. Hautes \'{E}tudes
  Sci. Publ. Math.,  (1982), pp.~5--99 (1983).

\bibitem{Gromov1993}
\leavevmode\vrule height 2pt depth -1.6pt width 23pt, {\em Asymptotic
  invariants of infinite groups}, in Geometric group theory, {V}ol. 2
  ({S}ussex, 1991), vol.~182 of London Math. Soc. Lecture Note Ser., Cambridge
  Univ. Press, Cambridge, 1993, pp.~1--295.

\bibitem{Gromov1999}
\leavevmode\vrule height 2pt depth -1.6pt width 23pt, {\em Metric structures
  for {R}iemannian and non-{R}iemannian spaces}, vol.~152 of Progress in
  Mathematics, 1999.

\bibitem{Gromov2009}
\leavevmode\vrule height 2pt depth -1.6pt width 23pt, {\em Singularities,
  expanders and topology of maps. {I}. {H}omology versus volume in the spaces
  of cycles}, Geom. Funct. Anal., 19 (2009), pp.~743--841.

\bibitem{InoueYano1982}
{\sc H.~Inoue and K.~Yano}, {\em The {G}romov invariant of negatively curved
  manifolds}, Topology, 21 (1982), pp.~83--89.

\bibitem{Loeh2019}
{\sc C.~L\"{o}h}, {\em Cost vs. integral foliated simplicial volume},
  arXiv:1809.09660 [math.GT], to appear in Groups Geom. Dyn.,  (2019).

\bibitem{LoehPagliantini2016}
{\sc C.~L\"{o}h and C.~Pagliantini}, {\em Integral foliated simplicial volume
  of hyperbolic 3-manifolds}, Groups Geom. Dyn., 10 (2016), pp.~825--865.

\bibitem{Radeschi15}
{\sc R.~A.~E. Mendes and M.~Radeschi}, {\em A slice theorem for singular
  {R}iemannian foliations, with applications}, Trans. Amer. Math. Soc., 371
  (2019), pp.~4931--4949.

\bibitem{Moerdijk}
{\sc I.~Moerdijk and J.~Mr\v{c}un}, {\em Introduction to foliations and {L}ie
  groupoids}, vol.~91 of Cambridge Studies in Advanced Mathematics, Cambridge
  University Press, Cambridge, 2003.

\bibitem{MoerdijkPronk1999}
{\sc I.~Moerdijk and D.~A. Pronk}, {\em Simplicial cohomology of orbifolds},
  Indag. Math. (N.S.), 10 (1999), pp.~269--293.

\bibitem{Orlik}
{\sc P.~Orlik}, {\em Seifert manifolds}, Lecture Notes in Mathematics, Vol.
  291, Springer-Verlag, Berlin-New York, 1972.

\bibitem{Pansu1983}
{\sc P.~Pansu}, {\em Effondrement des vari\'{e}t\'{e}s riemanniennes, d'apr\`es
  {J}. {C}heeger et {M}. {G}romov}, no.~121-122, 1985, pp.~63--82.
\newblock Seminar Bourbaki, Vol. 1983/84.

\bibitem{Pflaum}
{\sc M.~J. Pflaum}, {\em Analytic and geometric study of stratified spaces},
  vol.~1768 of Lecture Notes in Mathematics, Springer-Verlag, Berlin, 2001.

\bibitem{Preaux2014}
{\sc J.-P. Pr\'{e}aux}, {\em A survey on {S}eifert fiber space theorem}, ISRN
  Geom.,  (2014), pp.~Art. ID 694106, 9 pages,
  \url{https://doi.org/10.1155/2014/694106}.

\bibitem{Schmidt2005}
{\sc M.~Schmidt}, {\em $L^2$-{B}etti numbers of {R}-spaces and the integral
  foliated simplicial volume}, PhD thesis, Westf\"{a}lische
  {W}ilhelms-{U}niversit\"{a}t M\"{u}nster, 2005.
\newblock \url{http://nbn-resolving.de/urn:nbn:de:hbz:6-05699458563}.

\bibitem{Sniatycki}
{\sc J.~\'{S}niatycki}, {\em Differential geometry of singular spaces and
  reduction of symmetry}, vol.~23 of New Mathematical Monographs, Cambridge
  University Press, Cambridge, 2013.

\bibitem{Sullivan1976}
{\sc D.~Sullivan}, {\em A counterexample to the periodic orbit conjecture},
  Inst. Hautes \'{E}tudes Sci. Publ. Math.,  (1976), pp.~5--14.

\bibitem{Thurston1997}
{\sc W.~P. Thurston}, {\em Three-dimensional geometry and topology. {V}ol. 1},
  vol.~35 of Princeton Mathematical Series, Princeton University Press,
  Princeton, NJ, 1997.

\bibitem{Yang1963}
{\sc C.~T. Yang}, {\em The triangulability of the orbit space of a
  differentiable transformation group}, Bull. Amer. Math. Soc., 69 (1963),
  pp.~405--408.

\bibitem{Yano1982}
{\sc K.~Yano}, {\em Gromov invariant and {$S^{1}$}-actions}, J. Fac. Sci. Univ.
  Tokyo Sect. IA Math., 29 (1982), pp.~493--501.

\end{thebibliography}

\end{document}